\newcommand{\LL}{{\mathcal{L}}}
\newcommand{\A}{{\mathcal{A}}}
\newcommand{\J}{{\mathcal{J}}}
\renewcommand{\leq}{\leqslant}
\renewcommand{\geq}{\geqslant}
\newtheorem{theorem}{Theorem}[section]
\newtheorem{lemma}[theorem]{Lemma}%[section]
\DeclareMathOperator{\Real}{\mathop{Re}}
\numberwithin{equation}{section}
\author{Vitalii V. Iudelevich }
\title{On the sum-of-squares function \footnote{This is a preprint of an article accepted for publication in the journal Izvestiya RAN. Distribution rights are held by Steklov Mathematical Institute of Russian Academy of Sciences.}}
\date{\today}
\begin{document} 
	\maketitle
\begin{abstract}
	In this paper, we derive the following asymptotic formula
	$$ \mathop{{\sum}'}_{n\leqslant x}\dfrac{r(n)}{r(n+1)} = {x}{(\ln x)^{-3/4}}(c+o(1)),\ \ x \to +\infty,$$
	where $r(n)$ is the number of representations of $n$ as a sum of two squares, $c$ is a positive constant, and the prime indicates summation over those $n$ for which $r(n+1)\neq 0$.
	
	\textbf{Keywords:} Dirichlet characters, sum of two squares, dispersion method, Kloosterman sums.
\end{abstract}
	\section{Introduction}
	One of the key problems in analytic number theory is to obtain asymptotic formulas for sums of the form
	\begin{equation}\label{Cfg}
		\mathscr{C}_{f, g}(x) = \sum_{n\leqslant x}f(n)g(n+1),
	\end{equation}
	where $x \to +\infty$, and $f$ and $g$ are certain arithmetic functions. A typical example of such a problem arises when one of the functions $f$ or $g$ coincides with the divisor function
	$$\tau(n) = \sum_{d | n} 1.$$
	
	One of the earliest results for sums of this kind is due to Ingham \cite{Ingham27}. He proved that as $x\to +\infty$, the following equality holds:
	\begin{equation}\label{Ingham}
		\sum_{n\leqslant x}\tau(n)\tau(n+1) =  \dfrac{6}{\pi^2}x(\ln x)^2+ O\left(x\ln x\right)\!.
	\end{equation}
	Ingham's arguments is elementary; he expresses the left-hand side of the equality \eqref{Ingham} in terms of the number of solutions to the equation
	$$ab - cd = 1$$
	subject to the conditions $ab, cd\leqslant x$, and then asymptotically computes it using formulas for solutions to linear Diophantine equations in two variables. Subsequently, the equality \eqref{Ingham} has been refined and generalized by many authors (see \cite{Ester} -- \cite{Topac}).
	
	A more challenging problem is to find the asymptotic behavior for the sum \eqref{Cfg} when $f(n) = \tau(n)$ and $g(n)$ is an arithmetic function distinct from $f$. In 2004, A. A. Karatsuba posed the problem of determining the asymptotics for the sum
	$$S(x) = \sum_{ n \leqslant x}\dfrac{\tau(n)}{\tau(n+1)}.$$ 
	In 2008, F. Luca and I. Shparlinski \cite{Luca} established the correct order of magnitude of $S(x)$, showing that
	$$S(x)\asymp x\sqrt{\ln x}.$$
	Later, in 2010, M. A. Korolev \cite{Korol2010} obtained the asymptotic formula
	$$S(x) = Kx\sqrt{\ln x}+O(x\ln\ln x),$$
	where
	$$K = \dfrac{1}{\sqrt{\pi}}\prod_{p}\left(\dfrac{1}{\sqrt{p(p-1)}} + \sqrt{1-\dfrac{1}{p}}(p-1)\ln\dfrac{p}{p-1}\right) \approx 0{.}75782.$$
	A key ingredient that allows for the derivation of the asymptotic formula for $S(x)$ is the following analogue of Bombieri–Vinogradov's theorem (see \cite[Lemma $13$]{Korol2010}).
	\begin{theorem}\label{MainKorolev}
		Let $d\geqslant 1$ be a fixed integer. Then for any fixed $B>0$, there exists $A = A(B) > 0$ such that the inequality
		$$R = \sum_{q\leqslant Q}\dfrac{1}{\varphi(q)}\sum\limits_{\substack{\chi\,\textup{mod\,} q \\ \chi\neq\chi_0}}\left|\sum\limits_{\substack{n\leqslant N \\ (n, d) = 1}}\dfrac{\chi(n)}{\tau(n)}\right|\ll x(\ln x)^{-B}$$
		holds for all $Q, N$ with the conditions $Q\leqslant \sqrt{x}(\ln x)^{-A}, N\leqslant x$, where the constant in the symbol $\ll$ is ineffective and depends on $B$ and $d$.
	\end{theorem}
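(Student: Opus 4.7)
The plan is to follow the classical scheme for Bombieri-Vinogradov-type estimates for a multiplicative function: (i)~expose a bilinear structure via a convolution identity, (ii)~use individual Siegel-Walfisz bounds for small moduli, (iii)~deploy the multiplicative large sieve for the remaining moduli. First I would write $\frac{1}{\tau} = 1 \ast h$, where $h = \mu \ast \frac{1}{\tau}$ is the multiplicative function with $h(1) = 1$ and $h(p^{k}) = \frac{1}{k+1} - \frac{1}{k}$ for $k \geq 1$; the identity $(1 \ast h)(p^{k}) = \frac{1}{k+1} = \frac{1}{\tau(p^{k})}$ is then immediate by telescoping. The two useful features of $h$ are $|h(n)| \leq 1$ together with the fact that its Dirichlet series behaves like $\zeta(s)^{1/2}$ near $s=1$, whence $\sum_{e \leq E} |h(e)|^{2} \ll E(\ln E)^{-1/2}$. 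After removing the condition $(n, d) = 1$ by Möbius inversion (harmless since $d$ is fixed), the target sum factorises as
\begin{equation*}
\sum_{\substack{n \leq N \\ (n, d) = 1}} \frac{\chi(n)}{\tau(n)} \;=\; \sum_{e} h(e)\chi(e) \sum_{\substack{m \leq N/e \\ (m, de) = 1}} \chi(m).
\end{equation*}

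For moduli $q \leq Q_{0} := (\ln x)^{B_{0}}$ with $B_{0} = B_{0}(B)$ large, I would estimate the inner sum individually. Writing $\sum_{n \geq 1} \chi(n)/(\tau(n) n^{s}) = L(s, \chi)^{1/2} G(s, \chi)$ with $G$ analytic and uniformly bounded in $\Real s \geq \frac{1}{2} + \delta$, then shifting the Perron contour into the standard zero-free region (and invoking Siegel's theorem to handle a possible exceptional Siegel zero, the source of the ineffective $A = A(B)$) yields
\begin{equation*}
\Bigl|\sum_{n \leq N} \chi(n)/\tau(n)\Bigr| \;\ll\; N \exp(-c\sqrt{\ln N})
\end{equation*}
for every non-principal $\chi$ of conductor at most $Q_{0}$. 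Summation over the $O(Q_{0}^{2})$ pairs $(q, \chi)$ contributes $\ll x \exp(-c'\sqrt{\ln x})$ to $R$, comfortably within $x(\ln x)^{-B}$.

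For $Q_{0} < q \leq Q$ I would exploit the bilinear expression from Step~1. A dyadic decomposition $e \in (E, 2E]$ into $O(\ln x)$ ranges, followed by the multiplicative character large-sieve inequality applied to the resulting bilinear form in $(e, m)$, reduces matters to $L^{2}$-bounds on the coefficients. Using $\sum_{E < e \leq 2E} |h(e)|^{2} \ll E(\ln E)^{-1/2}$, Cauchy-Schwarz in $(q, \chi)$ to pass from the quadratic to the linear mean, and the hypothesis $Q \leq \sqrt{x}(\ln x)^{-A}$, one obtains the target bound $\ll x(\ln x)^{-B}$ provided $A = A(B)$ is chosen sufficiently large, with an analogous treatment of the coprimality corrections coming from the divisors of $d$.

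The main obstacle is the middle range $e \asymp \sqrt{N}$: here neither the outer $e$-sum nor the inner $m$-sum is individually short, so one must rely on the bilinear large-sieve estimate, and the crucial saving comes from the $(\ln E)^{-1/2}$ factor on $\|h\,\mathbf{1}_{(E, 2E]}\|_{2}^{2}$. This factor reflects the thinness of $1/\tau$ (whose mean is of order $(\ln x)^{-1/2}$ rather than constant), and tracking it carefully through the large-sieve inequality is what lets the argument close uniformly for $Q$ as large as $\sqrt{x}(\ln x)^{-A}$.
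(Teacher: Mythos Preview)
The paper does not prove this theorem itself---it is quoted from \cite{Korol2010}---but it establishes the analogous estimate for $1/h(n)$ in \S\ref{Korol} by a route quite different from yours: one writes the Dirichlet series $\sum_n \chi(n)/(\tau(n)n^s)$ as $L(s,\chi)^{1/2}$ times a factor holomorphic and bounded in $\Real s>1/2$, applies Perron's formula, shifts to $\Real s=\tfrac12+\tfrac1{\mathcal L}$ with horizontal cuts out to the zeros of $L(s,\chi)$, and controls the contribution of the zeros by Montgomery-type zero-density estimates (Lemma~\ref{L3} here, Lemma~6 in \cite{Korol2010}). The arbitrary power $(\ln x)^{-B}$ arises because the parameters $T$, $C$, $A$ governing the contour and the density bound can be tuned as functions of $B$.

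Your scheme, as written, cannot produce an arbitrary power of $\ln x$ in the large-modulus range. The convolution $1/\tau=1\ast h$ is a single Dirichlet factorisation with one smooth factor, so the bilinear large sieve applied to the dyadic piece $e\sim E$, $m\sim M$, $EM\asymp N$, over moduli $q\sim Q_1$ yields a bound of the shape
\[
Q_1^{-1}(Q_1^{2}+E)^{1/2}(Q_1^{2}+M)^{1/2}\,\|h\mathbf 1_{(E,2E]}\|_{2}\,M^{1/2}.
\]
Once $Q_1>N^{1/4}$---and $Q$ is allowed up to $x^{1/2-o(1)}$---some dyadic block must have $E\le Q_1^{2}\le M$, and the bound collapses to $M\,\|h\mathbf 1_{(E,2E]}\|_{2}\asymp NE^{-1/2}(\ln E)^{-3/8}$, which for bounded $E$ is $\gg N$. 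The saving you single out, $(\ln E)^{-1/2}$ on $\|h\|_2^{2}$, is a \emph{fixed} power of $\ln x$ and cannot be inflated to $(\ln x)^{-B}$; enlarging $A$ does not help because $Q$ no longer appears in this bound. Polya--Vinogradov on the long $m$-sum is no remedy either, since summing $\sqrt q\log q$ over $q\le Q$ costs $Q^{3/2}$. In short, the genuine obstruction is the extremal range of small $e$ (not the middle range you flag), and clearing it requires exactly the pointwise control on $S_\chi$ that the contour-integration/zero-density argument supplies and that a purely $L^2$ large-sieve treatment lacks.
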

	In this paper, we investigate the sum 
	$$Q(x) = {\sum_{n\leqslant x}}'\dfrac{r(n)}{r(n+1)},$$
	where $r(n)$ is the sum-of-squares function,
	$$r(n) = \#\left\{(a, b) \in \mathbb{Z}^2: a^2 + b^2 = n\right\}\!,$$
	and the prime indicates summation over those $n$ for which $r(n+1)\neq 0$.
	The function $r(n)$ is related to the divisor function due to the equality 
	\begin{equation}\label{rsum}
		r(n) = 4\sum_{d|n}\chi_4(d),
	\end{equation} 
	where $\chi_4$ is the unique non-principal character modulo $4$:
	\begin{equation}\label{chi4}
		\chi_4(n)=
		\begin{cases}
			(-1)^{\frac{n-1}{2}}, & \text{if $n$ is odd,}\\
			0, & \text{otherwise.}
		\end{cases}
	\end{equation}
	The main result of the paper is the following
	\begin{theorem}\label{MainTheorem} For any fixed $\varepsilon>0$, as $x \to +\infty$, we have
		$$Q(x) = \dfrac{ c_1 x}{(\ln x)^{\frac{3}{4}}} + O_{\varepsilon}\left(\dfrac{x}{(\ln x)^{1-\varepsilon}}\right)\!,$$
		where
		$$c_1 = \dfrac{\pi^{\frac{3}{4}}}{2\Gamma(\frac{1}{4})}\prod_{p\equiv 1\!\!\!\!\!\pmod{4}}\sqrt[4]{\dfrac{p-1}{p+1}}\left(\dfrac{1}{p-1} + (p-1)\ln\dfrac{p}{p-1}\right)\approx 0{.}339385.$$
	\end{theorem}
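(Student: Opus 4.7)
My plan is to follow the general architecture of Korolev's proof of the asymptotic for $S(x)$, adapted to the sum-of-squares function. Using \eqref{rsum} and swapping the order of summation I first write
$$Q(x) = 4\sum_{d\leq x}\chi_4(d)\sum_{\substack{m\leq x/d \\ r(dm+1)>0}}\frac{1}{r(dm+1)},$$
so that the inner sum runs over $N = dm+1$ in the residue class $N\equiv 1\pmod d$. Setting $h(n) = r(n)^{-1}\mathbf{1}\{r(n)>0\}$, which is multiplicative, orthogonality of characters modulo $d$ recasts the inner sum as
$$\frac{1}{\varphi(d)}\sum_{\chi\bmod d}\,\sum_{\substack{N\leq x \\ (N,d)=1}}\chi(N)h(N) + O(1).$$

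The principal character term should produce the main contribution. Since $r(n)/4 = (1*\chi_4)(n)$, the Dirichlet series $H(s) = \sum_n h(n)n^{-s}$ factors as an explicit Euler product, and a prime-by-prime comparison with $\zeta(s)L(s,\chi_4)$ shows that near $s=1$ it behaves like $G(s)(s-1)^{-1/4}$ with $G$ holomorphic and nonzero at $s=1$. The Selberg--Delange method then yields
$$\sum_{\substack{N\leq x,\,(N,d)=1 \\ r(N)>0}}\frac{1}{r(N)} = \frac{B(d)\, x}{(\ln x)^{3/4}} + O_{\e}\Bigl(\frac{x}{(\ln x)^{1-\e}}\Bigr)$$
uniformly for $d$ in a suitable range, with $B(d)$ a convergent Euler factor. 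Summing against $\chi_4(d)/\varphi(d)$ and rearranging the joint Euler product should reproduce $c_1$ exactly; the prefactor $\pi^{3/4}/(2\Gamma(1/4))$ is what Selberg--Delange delivers at exponent $\alpha = 1/4$.

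For the non-principal characters I set $D = \sqrt{x}(\ln x)^{-A}$ with $A = A(\e)$ large. For $d\leq D$ the crucial tool is a Bombieri--Vinogradov-type estimate which is the direct analog of Theorem \ref{MainKorolev} with $h$ in place of $1/\tau$:
$$\sum_{d\leq D}\frac{1}{\varphi(d)}\sum_{\substack{\chi\bmod d \\ \chi\neq\chi_0}}\Bigl|\sum_{\substack{N\leq x \\ (N,d)=1}}\chi(N)h(N)\Bigr| \ll_{B} \frac{x}{(\ln x)^{B}}.$$
For the remaining range $d>D$, I would apply Dirichlet's hyperbola to rewrite $r(n)/4$ as a sum of two pieces with divisors $\leq \sqrt{n}$ (the complementary piece carrying an extra factor $\chi_4(n/e)$), reducing everything to effective modulus $\leq\sqrt{x}$ where the Bombieri--Vinogradov step applies, possibly after a further dyadic or Vaughan-type decomposition to separate the $\chi_4(n/e)$.

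The main obstacle is unquestionably the Bombieri--Vinogradov estimate for $h$. In contrast to $1/\tau$, the function $h$ vanishes on primes $p\equiv 3\pmod 4$ and is supported on sums of two squares, so its distribution in arithmetic progressions is substantially more delicate. I would decompose $h$ via a Heath-Brown-style combinatorial identity into Type I and Type II (bilinear) pieces, handle the Type I sums by classical zero-free region arguments for $L(s,\chi)L(s,\chi\chi_4)$, and estimate the Type II sums by Linnik's dispersion method combined with the Deshouillers--Iwaniec bounds on averages of Kloosterman sums; this matches the tools advertised in the abstract.
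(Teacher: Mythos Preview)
Your architecture matches the paper's, but there is a genuine gap in the treatment of $d$ near $\sqrt{x}$. A Bombieri--Vinogradov estimate of the shape you propose can only hold for $d\le \sqrt{x}(\ln x)^{-A}$; after the hyperbola switch $d\mapsto n/d$, the complementary divisor lands in that range only when the original $d$ exceeded $\sqrt{x}(\ln x)^{A}$. The window $\sqrt{x}(\ln x)^{-A}<d\le \sqrt{x}(\ln x)^{A}$ is thus covered by neither device, and your sentence ``effective modulus $\le\sqrt{x}$ where the Bombieri--Vinogradov step applies'' conflates $\sqrt{x}$ with $\sqrt{x}(\ln x)^{-A}$. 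In Korolev's problem this window is harmless because the trivial bound there is $O(x\ln\ln x)$, comfortably below the main term $\asymp x\sqrt{\ln x}$; here the trivial bound is only $\ll x(\ln\ln x)/(\ln x)^{3/4}$, which \emph{exceeds} the target error $O_\varepsilon(x/(\ln x)^{1-\varepsilon})$. This is precisely the sum $Q^{Err_2}$ of \eqref{QQErr2}, and bounding it nontrivially is the main work of the paper.

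You have also misplaced the hard step. The Bombieri--Vinogradov estimate for $\mathbf{1}_{r(n)>0}/r(n)$ does not need a combinatorial identity or bilinear dispersion: it follows (as in \S\ref{Korol}, and exactly parallel to Korolev's Lemma~13) from contour integration for $\sum'\chi(n)h(n)n^{-s}$, which factors as $\bigl(L(s,\chi)L(s,\chi\chi_4)\bigr)^{1/4}$ times a benign Euler product, together with standard zero-density estimates. The dispersion method and Kloosterman sum bounds are instead what handle the middle window $Q^{Err_2}$: the paper follows Fouvry--Radziwi{\l\l} to manufacture a bilinear structure (almost every $n$ has a prime factor in $(\exp(\mathcal{L}^\varepsilon), x^\varepsilon]$), applies Poisson summation in the long variable, and then invokes the Bettin--Chandee bound for trilinear Kloosterman fractions rather than Deshouillers--Iwaniec.
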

	We briefly outline the proof scheme. In $\S 3$, using the equality \eqref{rsum}, we express the sum $Q(x)$ as
	$$Q(x) = 4\sum_{d\leqslant x}\chi_4(d){\sum\limits_{\substack{n \leqslant x \\ n\equiv 1\!\!\!\!\!\pmod{d}}}}'\dfrac{1}{r(n)} + O(1).$$
	Next, we denote by $Q_1, Q_2$, and $Q_3$ the contributions from those $d$ for which 
	$$d\leqslant \sqrt{x}(\ln x)^{-A},\ \ \sqrt{x}(\ln x)^{-A}<d \leqslant \sqrt{x}(\ln x)^{A},\ \ d>\sqrt{x}(\ln x)^{A}$$ 
	respectively. Here, $A>0$ is a sufficiently large constant, the exact value of which will be chosen later. Assuming that
	$${\sum\limits_{\substack{n\leqslant x \\ n \equiv 1\!\!\!\!\!\pmod{d}}}}' \dfrac{1}{r(n)} \approx \dfrac{1}{\varphi(d)}{\sum\limits_{\substack{n \leqslant x \\ (n, d) = 1}}}'\dfrac{1}{r(n)},$$
	and making the corresponding substitution,
	we obtain the expression
	$$Q^{MT} = 4\sum_{d\leqslant x}\dfrac{\chi_4(d)}{\varphi(d)}{\sum\limits_{\substack{n \leqslant x \\ (n, d) = 1}}}'\dfrac{1}{r(n)},$$
	which is later transformed into the main term of the asymptotics in $\S 13$ using the method of contour integration. The remainder terms obtained from sums $Q_1$ and $Q_3$ during this substitution are also estimated as in the work \cite{Korol2010}. To estimate them in $\S 4$, we establish an analogue of Theorem \ref{MainKorolev}.
	
	The greatest difficulty lies in estimating the remainder from the sum $Q_2$, specifically estimating the expression
	\begin{multline}\label{QQErr2}
		Q^{Err_2} = 4\sum_{\frac{\sqrt{x}}{(\ln x)^A} < d \leqslant \sqrt{x}(\ln x)^{A}}{\chi_4(d)}\left( {\sum\limits_{\substack{\ell \leqslant x \\ \ell\equiv 1\,(\text{mod}\, d)}}}'\dfrac{1}{r(\ell)} - \dfrac{1}{\varphi(d)}{\sum\limits_{\substack{\ell \leqslant x \\ (\ell, d) = 1}}}'\dfrac{1}{r(\ell)} \right)\!,
	\end{multline}
	to which the remaining paragraphs of the article are dedicated.
	In the work \cite{Korol2010}, the corresponding remainder was estimated trivially, which was sufficient to obtain asymptotics. In our case, a trivial estimate yields only the rough inequality
	$$Q^{Err_2}\ll \dfrac{x\ln\ln x}{(\ln x)^{\frac{3}{4}}}.$$
	A satisfactory estimate for the remainder $Q^{Err_2}$ is obtained by following the dispersion method technique from the work of E. Fouvry and M. Radziwi{\l\l} \cite{Fouvry Radziwill}.
	
	Roughly speaking, and not in the greatest generality, the dispersion method consists of estimating the square of the difference between certain double sums and their approximate values. An example of such a difference can be expressed as
	$$\mathcal{D}(N, M; d) = \sum\limits_{\substack{N<n \leqslant  2N\\ M<m\leqslant 2M \\ nm\equiv 1\!\!\!\!\!\pmod{d}}} a(m)b(n) -\dfrac{1}{\varphi(d)} \sum\limits_{\substack{N<n \leqslant  2N\\ M<m\leqslant 2M \\ (nm,d) = 1}} a(m)b(n),$$
	where $N, M$ are some positive numbers, and $a$ and $b$ are some generally complex-valued functions. Such a sum arises in $\S 6$, where we show that
	\begin{equation*}
		Q^{Err_2} \approx \dfrac{1}{4\pi}\sum_{k<\LL^{1-\varepsilon}}\dfrac{1}{k}{\sum\limits_{\substack{D, N, M }}}\int_{-\infty}^{+\infty}F(it)\tilde{U}(D, N, M, t)x^{it} dt,
	\end{equation*}
	\begin{equation*}
		\tilde{U}(D, N, M, t) = \sum_{D<d\leqslant  2D}\chi_4(d)\mathcal{D}(N, M).
	\end{equation*}
	Here, the double sum in $\tilde{U}$ arises from representing $\ell$ as $\ell = nm$, where $n$ is a prime from some fixed interval. The summation over $k$ arises from accounting for the number of such representations. The presence of an integral and a smooth function $F$ comes from replacing the condition $nm \leqslant x$ with a smooth factor $f(nm/x)$, which serves as an approximation to the characteristic function of the interval $[0,1]$, followed by applying the Mellin transform to it.
	
	When squaring the sum $\tilde{U}$, we obtain typical sums for the dispersion method, namely $W$, $V$, and $U$:
	\begin{equation*}
		|\tilde{U}(D, N, M, t)|^2 \leqslant \left\|a\right\|_2^2\left(W - 2\Real V + U \right)\!. 
	\end{equation*}
	Here, $\left\|a\right\|_2$ denotes the $\ell_2$-norm of a sequence $a$,
	$$W = \sum_{m=-\infty}^{+\infty}\psi\left(\dfrac{m}{M}\right)\sum_{d_1, d_2\sim D}\chi_4(d_1)\chi_4(d_2)\sum\limits_{\substack{n_1, n_2\sim N \\ n_1m\equiv 1\!\!\!\!\!\pmod {d_1} \\ n_2m\equiv 1\!\!\!\!\!\pmod {d_2}}}b(n_1)\overline{b}(n_2),$$
	$$V= \sum_{m=-\infty}^{+\infty}\psi\left(\dfrac{m}{M}\right)\sum_{d_1, d_2\sim D}\dfrac{\chi_4(d_1)\chi_4(d_2)}{\varphi(d_2)}\sum\limits_{\substack{n_1, n_2\sim N\\ n_1m\equiv 1\!\!\!\!\!\pmod {d_1} \\ (n_2m, d_2)=1}}b(n_1)\overline{b}(n_2),$$
	$$U= \sum_{m=-\infty}^{+\infty}\psi\left(\dfrac{m}{M}\right)\sum_{d_1, d_2\sim D}\dfrac{\chi_4(d_1)\chi_4(d_2)}{\varphi(d_1)\varphi(d_2)}\sum\limits_{\substack{n_1, n_2\sim N \\ (n_1m, d_1)=1 \\ (n_2m, d_2)=1}}b(n_1)\overline{b}(n_2),$$
	and the function $\psi(t)$ is a smooth function supported on $[1/2, 5/2]$ and equal to $1$ for $1 \leqslant t \leqslant 2$. 
	The transformation of sums $V$ and $U$ poses no difficulties. Elementary asymptotic counting shows that these sums are equal to each other up to a lower-order term and are equal to some real number $U^{MT}$.
	
	By changing the order of summation for the sum $W$, we have
	$$W = \sum_{d_1, d_2 \sim D}\chi_4(d_1)\chi_4(d_2)\sum\limits_{\substack{n_1, n_2 \sim N \\ (n_1, d_1) = 1\\ (n_2, d_2) = 1 }}b(n_1)\overline{b}(n_2)\sum\limits_{\substack{m=-\infty \\ mn_1 \equiv 1 \!\!\!\!\! \pmod{d_1} \\ mn_2 \equiv 1 \!\!\!\!\! \pmod{d_2}}}^{+\infty}\psi\left(\dfrac{m}{M}\right)\!.$$
	Applying a variant of the Chinese remainder theorem to the inner sum where the moduli are not necessarily coprime gives us from the Poisson summation formula for the sum over $m$ that
	$$\sum_{m \equiv \nu \!\!\!\!\! \pmod{q}}\psi\left(\dfrac{m}{M}\right) =   \dfrac{M}{q}\hat{\psi}(0) + \dfrac{M}{q}\sum_{m \neq 0}\hat{\psi}\left(\dfrac{Mm}{q}\right)e^{2\pi i \frac{ma}{q}},$$
	where $q$ is the least common multiple of $d_1$ and $d_2$, and $\nu$ is some residue modulo $q$.
	The contribution from the first term in the sum $W$ will be denoted as $W^{MT}$. In what follows, the difference $W^{MT}-U^{MT}$ is transformed into a more convenient form using a standard formula for dispersion and is estimated using theorems on the distribution of prime numbers in arithmetic progressions. The contribution from the remaining sum over small values of $m$ is obtained using estimates on Kloosterman sums. The sum over large values of $m$ is estimated trivially.
	
	Finally, we note that following the proof of Theorem \ref{MainTheorem}, one can obtain the next term in asymptotics for the sum $S(x)$. Specifically, it can be shown that for some constant $K_{1}$ and any fixed $\varepsilon > 0$, the following equality holds:
	$$S(x) = Kx\sqrt{\ln x} + \dfrac{K_{1}x}{\sqrt{\ln x}} + O_\varepsilon\left(\dfrac{x}{(\ln x)^{1-\varepsilon}}\right)\!.$$
	
	Eventually, we expect (and leave this for further work) that a modification of methods from \cite{Fouvry Tenenbaum} will allow us to obtain an asymptotic expansion for the sum $Q(x)$:
	$$Q(x) = \dfrac{c_{1}x}{(\ln x)^{3/4}}+\dfrac{c_{2}x}{(\ln x)^{7/4}}+\cdots+\dfrac{c_{m}x}{(\ln x)^{m-1/4}}+O_m\left(\dfrac{x}{(\ln x)^{m+3/4}}\right)\!.$$
	In this expression, $m \geqslant  1$ is an arbitrarily fixed integer and $c_i$ are some constants.
	\section{Auxiliary Results}
	The following three lemmas are necessary for proving the main statements in \S\ref{Korol}.
	\begin{lemma}\label{L1}
		Let $\chi_1$ and $\chi_2$ be two distinct characters modulo $d \geqslant 2$, and let $\chi_4$ be defined in \eqref{chi4}. Then $\chi_1\chi_4$ and $\chi_2\chi_4$ are also two distinct characters, each of whose modulus divides $4d.$
	\end{lemma}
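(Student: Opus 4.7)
The plan is to handle the two assertions of the lemma separately: first the modulus claim, which is essentially formal, and then the distinctness claim, which requires a short CRT-style argument.

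For the modulus statement, I would observe that $\chi_1\chi_4$ is well-defined as a function on $\mathbb{Z}$, periodic with period $\operatorname{lcm}(d,4)$, and multiplicative on integers coprime to $4d$. It vanishes whenever $\gcd(n,4d)>1$. Hence $\chi_1\chi_4$ descends to a character modulo $\operatorname{lcm}(d,4)$, and since $\operatorname{lcm}(d,4)\mid 4d$, its modulus divides $4d$. The same applies to $\chi_2\chi_4$.

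For distinctness, suppose $\chi_1\chi_4 = \chi_2\chi_4$ as functions on $\mathbb{Z}$. Then for every odd integer $n$ coprime to $d$ we have $\chi_4(n)\in\{\pm 1\}$, so $\chi_1(n)=\chi_2(n)$. To conclude $\chi_1=\chi_2$ as characters modulo $d$, I must show that every residue class $a\bmod d$ with $\gcd(a,d)=1$ contains an odd representative. If $d$ is odd, at least one of $a,a+d$ is odd. If $d$ is even, then $\gcd(a,d)=1$ forces $a$ itself to be odd, and I take $n=a$. Either way $\chi_1(a)=\chi_2(a)$ for all $a$ coprime to $d$, contradicting the assumed distinctness of $\chi_1$ and $\chi_2$ modulo $d$.

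There is no real obstacle here; the only point to be careful about is not to conflate the moduli — the equality $\chi_1\chi_4=\chi_2\chi_4$ is a priori an equality of functions (or of characters modulo $4d$), and one must descend this to an equality modulo $d$, which is precisely what the odd-representative argument above achieves. Combining the two parts completes the proof.
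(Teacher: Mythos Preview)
Your proof is correct. The modulus claim and the even-$d$ case of distinctness match the paper's argument essentially verbatim. For the odd-$d$ case, however, you take a more elementary route: the paper invokes the decomposition of a character modulo $d$ into its prime-power components and argues that some local component $\chi_1(\cdot\mid p^\nu)\neq\chi_2(\cdot\mid p^\nu)$ survives multiplication by $\chi_4$, whereas you simply observe that every reduced residue class modulo an odd $d$ contains an odd representative (namely $a$ or $a+d$), so equality of $\chi_1$ and $\chi_2$ on odd integers coprime to $d$ already forces $\chi_1=\chi_2$. Your argument is shorter, avoids the structure theory of characters, and handles both parities of $d$ in a uniform way; the paper's version, by contrast, makes the prime-power viewpoint explicit, which it then reuses in the subsequent Lemma~\ref{L2}.
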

	\begin{proof}
		Assume $d$ is even. Since $\chi_1 \neq \chi_2$, there exists an integer $n$ that is coprime to $d$, such that 
		$$\chi_1(n) \neq \chi_2(n).$$
		Since $n$ is odd, we have $\chi_4(n) \neq 0$, thus
		$$\chi_1\chi_4(n) \neq \chi_2\chi_4(n)$$
		and, consequently, $\chi_1\chi_4 \neq \chi_2\chi_4.$
		
		For a prime power $p^\nu$, we will write $p^\nu || n$ if $p^\nu | n$ and $p^{\nu+1} \nmid n$.
		Assume $d$ is odd; then, by the uniqueness of character decomposition, there exists a prime power $p^\nu\,||\, d$ such that 
		$$\chi_1(\cdot|p^\nu) \neq \chi_2(\cdot|p^\nu),$$
		where $\chi_1(\cdot|p^\nu)$ and $\chi_2(\cdot|p^\nu)$ are certain characters modulo $p^\nu$, arising from the decompositions of $\chi_1$ and $\chi_2$, respectively.
		
		Since $\chi_1(\cdot|p^\nu)$ and $\chi_2(\cdot|p^\nu)$ are included in the decompositions of the characters $\chi_1\chi_4$ and $\chi_2\chi_4$, we again obtain $\chi_1\chi_4 \neq \chi_2\chi_4.$
		
		Now for $i=1, 2$, by the properties of characters, the modulus of $\chi_i\chi_4$ divides the least common multiple $[d, 4]$, which means it also divides $4d.$
		The lemma is proved.
	\end{proof}
	Next, we will use the notation $\mathbb{I}_A = \mathbb{I}(A)$ for the indicator function of a statement $A$.
	
	\begin{lemma}\label{L2}
		Let $\chi$ be a primitive character modulo $d \geqslant 3$, and let the character $\chi_4$ be defined in \eqref{chi4}. Then the primitive character $\chi^*$, inducing $\chi\chi_4$, satisfies the equality
		$$\chi^*(n) = \begin{cases} 
			\chi\chi_4(n), & \text{if } d \text{ is odd or } 8|d,\\
			\chi\left(n\left|\frac{d}{4}\right)\right.\!\!, & \text{if } 4\,||\, d.
		\end{cases}$$
		Moreover, the modulus of $\chi^*$ is $4d$ for odd $d$, $d/4$ for $4\,||\, d$, and $d$ for $8|d$.
	\end{lemma}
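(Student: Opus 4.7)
The plan is to exploit the unique factorization of Dirichlet characters via the Chinese Remainder Theorem, writing $\chi = \prod_{p \mid d}\chi(\cdot|p^{\nu_p(d)})$ with each factor a primitive character at its prime. Since $\chi_4$ lives only at the prime $2$, only the $2$-part of $\chi$ interacts nontrivially with $\chi_4$, and the primary components of $\chi\chi_4$ at odd primes are simply those of $\chi$ and are therefore already primitive. Hence the entire question reduces to analysing the $2$-primary component, which forces the trichotomy by $v_2(d)$.

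If $d$ is odd, the $2$-part of $\chi$ is trivial, so the $2$-part of $\chi\chi_4$ is $\chi_4$ itself, primitive modulo $4$. Combined with the primitivity of $\chi$ at each odd prime dividing $d$, this shows $\chi\chi_4$ is primitive modulo $4d$, giving $\chi^* = \chi\chi_4$. If $4\,||\,d$, write $d = 4e$ with $e$ odd; primitivity of $\chi$ modulo $d$ forces its $2$-part to be primitive modulo $4$, and the only such character is $\chi_4$. Thus $\chi = \chi_4\cdot\chi(\cdot|e)$ with $\chi(\cdot|e)$ primitive modulo $e = d/4$, and multiplying by $\chi_4$ trivialises the $2$-part because $\chi_4^2$ is principal modulo $4$. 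The primitive character inducing $\chi\chi_4$ is therefore $\chi(\cdot|d/4)$, of conductor $d/4$.

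The remaining case $8\mid d$ is where a small argument is needed. Write $2^k\,||\,d$ with $k\geq 3$, let $\chi_2$ denote the $2$-part of $\chi$ (primitive modulo $2^k$), and let $\tilde{\chi}_4$ be the character modulo $2^k$ induced by $\chi_4$. I claim $\chi_2\tilde{\chi}_4$ is still primitive modulo $2^k$: if its conductor were $2^j$ with $j\leq k-1$, then
$$\chi_2 = (\chi_2\tilde{\chi}_4)\cdot\tilde{\chi}_4^{-1}$$
would be induced by a character of modulus dividing $\mathrm{lcm}(2^j,4)\leq 2^{k-1}$ (here we use $k\geq 3$), contradicting primitivity of $\chi_2$ modulo $2^k$. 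Hence $\chi\chi_4$ is primitive modulo $d$ and $\chi^* = \chi\chi_4$.

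The only real obstacle in the proof is this last conductor comparison; it is resolved by the elementary inequality $\mathrm{lcm}(2^j,4)\leq 2^{k-1}$ for $j\leq k-1$ and $k\geq 3$, and the rest is routine bookkeeping with the CRT decomposition of characters.
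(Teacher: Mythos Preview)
Your proof is correct and follows the same overall strategy as the paper: decompose $\chi$ via CRT into its $p$-primary components, observe that only the $2$-part interacts with $\chi_4$, and then split into the three cases $d$ odd, $4\,\|\,d$, $8\mid d$. The first two cases are handled identically in both arguments.

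The one notable difference is in the case $8\mid d$. The paper writes the $2$-part $\chi(\cdot\,|\,2^\alpha)$ explicitly in terms of the index system $n\equiv(-1)^{\gamma_0}5^{\gamma_1}\pmod{2^\alpha}$, so that $\chi(n\,|\,2^\alpha)=(-1)^{m_0\gamma_0}e^{2\pi i m_1\gamma_1/2^{\alpha-2}}$ with $(m_1,2)=1$; multiplying by $\chi_4(n)=(-1)^{\gamma_0}$ flips $m_0$ but leaves $m_1$ untouched, so primitivity (which is governed by $(m_1,2)=1$) is preserved. You instead argue abstractly by contradiction on conductors: if $\chi_2\tilde\chi_4$ had conductor $2^j$ with $j\leq k-1$, then $\chi_2=(\chi_2\tilde\chi_4)\tilde\chi_4$ would be induced from modulus $\mathrm{lcm}(2^j,4)\leq 2^{k-1}$, contradicting primitivity. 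Your route avoids the explicit index bookkeeping and works uniformly; the paper's route is more hands-on but makes the mechanism (that $\chi_4$ only touches the ``sign'' parameter $m_0$) visible. Both are perfectly valid.
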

	\begin{proof}
		Assume $d$ is odd; then $\chi\chi_4$ is a primitive character as a product of primitive characters with coprime moduli. Since $(4, d) = 1$, the modulus of $\chi^*$ is $4d$.
		
		Now assume $d$ is even; since $\chi$ is a primitive character modulo $d$, we have
		$$d \not\equiv\,2 \pmod 4.$$
		This implies that either $4\, ||\, d$, or $8|d$. In the first case, since $\chi_4$ is the only primitive character modulo $4$, we have 
		$$\chi(n) = \chi\left( n \left| \frac{d}{4}\right.\right)\chi_4(n),$$
		where $\chi(\cdot|d/4)$ is some primitive character modulo $d/4$. Thus, since 
		$$\chi\chi_4(n) = \chi\left( n \left| \frac{d}{4} \right.\right)\mathbb{I}_{(n,2) = 1} = \chi\left( n \left| \frac{d}{4} \right.\right) \chi_0(n),$$
		where $\chi_0$ is the principal character modulo $d$, we obtain
		$$\chi^*(n) = \chi\left( n \left| \frac{d}{4} \right.\right),$$
		and the modulus of $\chi^*$ is equal to $d/4$.
		
		Now assume that $8|d$; then for some odd integer $d_1$ and $\alpha \geqslant 3$, we have $d = 2^\alpha d_1$ and
		$$\chi(n) = \chi(n|2^\alpha)\chi(n|d_1),$$
		where $\chi(\cdot|2^\alpha)$ and $\chi(\cdot|d_1)$ are primitive characters modulo $2^\alpha$ and $d_1$, respectively. Since $\alpha \geqslant 3$, it is known that $\chi(n|2^\alpha)$ has the form
		$$
		\chi(n|2^\alpha) = 
		\begin{cases}
			(-1)^{m_0\gamma_0} e^{2\pi i\frac{m_1\gamma_1}{2^{\alpha-2}}}, & \text{if } n \text{ is odd,}\\
			0, & \text{otherwise.}
		\end{cases}
		$$
		Here, $\gamma_0$ and $\gamma_1$ are systems of indices modulo $2^\alpha$:
		$$n \equiv (-1)^{\gamma_0} 5^{\gamma_1}\!\!\!\!\!\pmod {2^{\alpha}}$$
		and
		$$m_0\in\left\{0, 1\right\}\!,\ m_1\in\left\{0, 1, \ldots, 2^{\alpha-2}-1\right\}\!,\ (m_1, 2) = 1.$$
		Since 
		$$\chi_4(n) = (-1)^{\gamma_0}\mathbb{I}_{(n, 2) = 1},$$
		it follows that $\chi(\cdot|2^\alpha)\chi_4$ is also a primitive character modulo $2^\alpha$. Hence,
		$$\chi\chi_4(n) = (\chi(n|2^\alpha)\chi_4(n))\chi(n|d_1)$$
		is a primitive character modulo $d$. Thus, we have $\chi^* = \chi\chi_4$, and the modulus of $\chi^*$ equals $d$. The lemma is proved.
	\end{proof}
	The next lemma is a slight modification of a well-known result of Montgomery (see Lemma 6 in \cite{Korol2010}).
	\begin{lemma}\label{L3}
		For any $Q\geqslant 3, T\geqslant 3$, and $1/2\leqslant \sigma \leqslant 1$, we define
		$$S(Q; T, \sigma) = \sum_{q\leqslant Q}{\sum_{\chi\, \textup{mod}\, q}}^{*} N(\sigma; T, \chi\chi_4),$$
		where $*$ denotes summation over primitive characters, and
		$$N(\sigma; T, \chi) = \#\left\{\rho = \beta+i\gamma: L(\rho, \chi) = 0,\ \beta\geqslant \sigma, |\gamma|\leqslant T\right\}.$$
		Then 
		$$S(Q; T, \sigma) \ll (Q^2T)^{\vartheta(\sigma)}(\ln(QT))^{14},$$
		where
		$$\vartheta(\sigma) = \begin{cases}
			\frac{3(1-\sigma)}{2-\sigma}, & \text{if } \frac{1}{2}\leqslant \sigma\leqslant \frac{4}{5},\\
			\frac{2(1-\sigma)}{\sigma}, & \text{if } \frac{4}{5}\leqslant \sigma\leqslant 1,
		\end{cases}$$
		and the constant in the symbol $\ll$ is absolute.
	\end{lemma}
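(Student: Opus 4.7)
My plan is to reduce Lemma~\ref{L3} to the classical Montgomery zero-density estimate for primitive characters, using Lemmas~\ref{L1} and~\ref{L2} to absorb the twist by $\chi_4$. For each primitive character $\chi$ modulo $q$, the $L$-function $L(s,\chi\chi_4)$ differs from $L(s,\chi^*)$, where $\chi^*$ denotes the primitive character inducing $\chi\chi_4$, only by a finite Euler product over primes dividing $4q$. This finite factor is nonzero in the half-plane $\Real s>0$, so the two $L$-functions share the same non-trivial zeros, whence
$$N(\sigma;T,\chi\chi_4) = N(\sigma;T,\chi^*).$$

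The next step is to reindex the sum $S(Q;T,\sigma)$ by the conductor $q^*$ of $\chi^*$. Lemma~\ref{L2} determines $q^*$ explicitly as a function of $q$: we have $q^*=4q$ if $q$ is odd, $q^*=q/4$ if $4\,\|\,q$, and $q^*=q$ if $8\mid q$. In each case $q^*\leq 4Q$. Lemma~\ref{L1} then ensures that distinct primitive $\chi$ modulo $q$ produce distinct characters $\chi\chi_4$ and hence distinct induced characters $\chi^*$. Conversely, a given primitive $\chi^*$ modulo $q^*$ can arise from at most three pairs $(q,\chi)$, since the three cases of Lemma~\ref{L2} restrict $q$ to the set $\{q^*/4,\,4q^*,\,q^*\}$, and for each admissible $q$ the character $\chi$ is uniquely recovered from $\chi^*$ through the same formulas (either $\chi=\chi^*\chi_4$ on the residues coprime to $q$, or $\chi(n)=\chi^*(n)\chi_4(n)$ in the middle case). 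Therefore the map $(q,\chi)\mapsto(q^*,\chi^*)$ is at most $3$-to-$1$, which yields
$$S(Q;T,\sigma) \leq 3\sum_{q^*\leq 4Q}{\sum_{\chi^*\,\textup{mod}\, q^*}}^{*} N(\sigma;T,\chi^*).$$

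At this stage the right-hand side is controlled by Montgomery's classical density theorem, namely the untwisted version stated as Lemma~6 of \cite{Korol2010}, applied with upper bound $4Q$ in place of $Q$. This gives
$$S(Q;T,\sigma) \ll \bigl((4Q)^2 T\bigr)^{\vartheta(\sigma)}\bigl(\ln(4QT)\bigr)^{14} \ll (Q^2T)^{\vartheta(\sigma)}(\ln(QT))^{14},$$
which is the desired bound. The only substantive point is the bookkeeping in the second paragraph, that is, verifying that the fibers of $(q,\chi)\mapsto(q^*,\chi^*)$ are of size $O(1)$ so that applying Montgomery to the reindexed sum does not inflate the count; once that injectivity-up-to-constants is secured, the lemma follows by a direct appeal to the known estimate, and the absolute constants are harmless since $4Q\leq 4Q$ and $\ln(4QT)\ll\ln(QT)$ for $Q,T\geq 3$.
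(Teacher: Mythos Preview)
Your proposal is correct and follows essentially the same strategy as the paper: both reduce to Montgomery's density estimate (Lemma~6 of \cite{Korol2010}) via the identity $N(\sigma;T,\chi\chi_4)=N(\sigma;T,\chi^*)$ and the conductor information in Lemmas~\ref{L1} and~\ref{L2}. The paper splits the sum explicitly as $S_1+S_2+S_3$ according to whether $q$ is odd, $4\,\|\,q$, or $8\mid q$, and bounds each piece separately by the untwisted sum over $q\leq 4Q$ (or $q\leq Q$); your multiplicity argument packages the same three cases into a single ``at most $3$-to-$1$'' statement, which is slightly less sharp (the map is in fact injective, since the three cases force $q^*$ into disjoint residue classes modulo $8$) but entirely sufficient for the conclusion.
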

	\begin{proof}
		We have
		$$S(Q; T, \sigma) = S_1 + S_2 + S_3,$$
		where
		$$S_1 = \sum\limits_{\substack{q\leqslant Q \\ q\, \text{is odd}}} {\sum_{\chi\, \textup{mod}\, q}}^{*} N(\sigma; T, \chi\chi_4),$$
		$$S_2 = \sum\limits_{\substack{q\leqslant Q \\ 4\,||\, q}} {\sum_{\chi\, \textup{mod}\, q}}^{*} N(\sigma; T, \chi\chi_4)$$
		and
		$$S_3 = \sum\limits_{\substack{q\leqslant Q \\ 8 | q}} {\sum_{\chi\, \textup{mod}\, q}}^{*} N(\sigma; T, \chi\chi_4).$$
		Note that for any $\sigma\geqslant 1/2$, the equality 
		\begin{equation}\label{N}
			N(\sigma, T, \chi) = N(\sigma, T, \chi^*)
		\end{equation}
		holds, where $\chi^*$ is the primitive character inducing the character $\chi$. Indeed, if $q_1 | q$ is the modulus of the character $\chi^*$ and $\chi_0$ is the principal character modulo $q$, then
		\begin{multline}
			L(s, \chi) = \prod_p\left(1-\dfrac{\chi^*(p)\chi_0(p)}{p^s}\right)^{-1} \\ 
			= \prod\limits_{\substack{p\,\nmid\, q \\ p\,\nmid\, q_1}}\left(1-\dfrac{\chi^*(p)}{p^s}\right)^{-1} = L(s, \chi^*)\prod\limits_{\substack{p\, |\, q \\ p\,\nmid\, q_1}}\left(1-\dfrac{\chi^*(p)}{p^s}\right)\!.
		\end{multline}
		From this it follows that $L(s, \chi)$ and $L(s, \chi^*)$ have the same nontrivial zeros, counted with multiplicity. Thus, we have the equality (\ref{N}). Therefore, by Lemmas~\ref{L1} and~\ref{L2}, we obtain
		$$S_1 \leqslant \sum_{q\leqslant 4Q}{\sum_{\chi\, \textup{mod}\, q}}^{*} N(\sigma; T, \chi)$$
		and
		$$\max(S_2, S_3) \leqslant \sum_{q\leqslant Q}{\sum_{\chi\, \textup{mod}\, q}}^{*} N(\sigma; T, \chi).$$
		Applying Lemma 6 from work~\cite{Korol2010}, we obtain the required result.
		
	\end{proof}
	
	The following auxiliary statements are used in the sections where the sum $Q^{Err_2}$, defined in \eqref{QQErr2}, is estimated.
	%The following auxiliary statements relate to the sections where the sum $Q^{Err_2}$, defined in \eqref{QQErr2}, is estimated.
	
	\begin{theorem}[Shiu]\label{L3.1}
		Let $f$ be a non-negative multiplicative function such that
		\begin{enumerate}
			\item there exists a constant $A_1>0$ such that for any powers of primes it holds $f(p^\nu)\leqslant A_1^\nu$;
			\item for any $\varepsilon > 0$, there exists a quantity $A_2(\varepsilon)>0$ such that for any $n\geqslant 1$, it holds
			$f(n)\leqslant A_2(\varepsilon)n^{\varepsilon}.$ 
		\end{enumerate}
		Furthermore, let $a$ and $q$ be coprime integers such that $0\leqslant a<q$, and let $0<\alpha, \beta<\frac{1}{2}$. Then for $q<y^{1-\alpha}$ and $x^\beta<y\leqslant x$, for sufficiently large $x$, the inequality 
		$$\sum\limits_{\substack{x-y<n\leqslant x \\ n\equiv a\!\!\!\!\!\pmod{q}}} f(n)\ll \dfrac{y}{\varphi(q)}\dfrac{1}{\ln x}\exp\left(\sum\limits_{\substack{p\leqslant x \\ p\nmid q}}\dfrac{f(p)}{p}\right)\!$$
		holds,
		where the constant in the symbol $\ll$ depends on $\alpha, \beta, A_1$, and $A_2(\varepsilon)$.
	\end{theorem}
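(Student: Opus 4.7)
The plan is to follow Shiu's original 1980 argument, which combines a Rankin-type smooth/rough decomposition of $n$ with a Brun--Titchmarsh type bound for primes in short intervals in arithmetic progressions. First I would fix a cut-off $z$, to be chosen as a small power of $y$ (for instance $z=y^{\alpha/4}$), and write each $n$ in the range $x-y<n\leqslant x$ uniquely as $n=km$, where every prime dividing $k$ is at most $z$ (the $z$-smooth part) and every prime dividing $m$ exceeds $z$. Since $(a,q)=1$, the congruence $n\equiv a\pmod q$ forces $(k,q)=1$ and becomes $m\equiv a\overline{k}\pmod q$, while multiplicativity gives $f(n)=f(k)f(m)$.

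Next, for each fixed smooth $k$, I would estimate the inner sum
$$\sum_{\substack{(x-y)/k<m\leqslant x/k \\ m\equiv a\overline{k}\!\!\!\!\pmod{q} \\ p\mid m\,\Rightarrow\, p>z}} f(m).$$
Splitting off one prime factor of $m$ and iterating in Shiu's fashion reduces this to counting primes in short arithmetic progressions; the hypothesis $q<y^{1-\alpha}$ ensures the progression length $(y/k)/q$ is non-trivially large, so Brun--Titchmarsh applies uniformly in this range. The resulting bound has the shape
$$\ll \frac{y/k}{\varphi(q)\ln(x/k)}\exp\Bigl(\sum_{\substack{z<p\leqslant x \\ p\nmid q}}\frac{f(p)}{p}\Bigr).$$

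Finally I would sum over $k$, restricted to $z$-smooth integers coprime to $q$. By hypothesis (1) combined with a Rankin trick, the weighted sum $\sum_k f(k)/k$ is essentially $\prod_{p\leqslant z,\,p\nmid q}(1+f(p)/p+f(p^2)/p^2+\cdots)$, and the contribution of very large $k$ is controlled by hypothesis (2). Folding this factor into the exponential from the rough sum recombines the Euler factors up to $z$ with those from $z$ to $x$ and yields $\exp\bigl(\sum_{p\leqslant x,\, p\nmid q}f(p)/p\bigr)$, as required.

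The main obstacle is obtaining uniformity in $q$ all the way up to the boundary $q\sim y^{1-\alpha}$: the Brun--Titchmarsh input is delicate in this regime, and $z$ must be chosen small enough for the sieve reduction to succeed yet large enough that both the smooth and rough contributions can be controlled. The two hypotheses $q<y^{1-\alpha}$ and $x^{\beta}<y$ are precisely what is needed to keep these two constraints simultaneously satisfiable, and they also underlie the fact that $\ln(x/k)\asymp\ln x$ uniformly in the relevant range of $k$, so that the $1/\ln x$ in the final bound emerges cleanly.
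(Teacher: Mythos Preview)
Your sketch follows Shiu's original 1980 argument, but note that the paper itself does not prove this theorem: immediately after the statement it simply writes ``This is Theorem~1 from \cite{Shiu80}.'' So there is no proof in the paper to compare against; the result is quoted as a black box and applied later in \S\ref{Fouvry-Radziwill}.

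That said, your outline of Shiu's method is broadly correct. One refinement worth flagging: in Shiu's actual proof the decomposition is more delicate than a single smooth/rough split at a fixed $z$. He partitions the range of $n$ according to the size of the largest prime factor and the size of the smooth part simultaneously, handling several cases separately (small smooth part, large smooth part with moderate largest prime, etc.), and the parameter choices are tuned so that hypothesis~(2) controls the ``pathological'' $n$ whose smooth part is unusually large. Your single-cutoff description captures the spirit but would need this case analysis to go through uniformly up to $q\sim y^{1-\alpha}$.
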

	This is Theorem 1 from \cite{Shiu80}.
	
	\begin{theorem}[The Richert~--~Halberstam inequality]\label{L3.2}
		Let $f$ be a non-negative multiplicative function such that for some constants $A, B>0$, the inequalities hold
		$$\sum_{p\leqslant y} f(p)\ln p\leqslant Ay,\ \ y\geqslant 0,$$
		$$\sum_{p}\sum_{\nu\geqslant 2}\dfrac{f(p^\nu)\log p^\nu}{p^\nu}\leqslant B.$$
		Then for $x>1$, the following estimate holds:
		$$\sum_{n\leqslant x} f(n)\leqslant (A+B+1)\dfrac{x}{\ln x}\sum_{n\leqslant x}\dfrac{f(n)}{n}.$$
	\end{theorem}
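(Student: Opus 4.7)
The plan is to begin from the identity
$$S(x)\ln x = \sum_{n\leq x}f(n)\ln n + \sum_{n\leq x}f(n)\ln\frac{x}{n},$$
where $S(x)=\sum_{n\leq x}f(n)$, and bound each piece in turn. For the first sum, I would expand $\ln n=\sum_{p^\nu\,||\,n}\ln p^\nu$, interchange the order of summation, and invoke the multiplicativity of $f$ to write
$$\sum_{n\leq x}f(n)\ln n \;=\; \sum_{p^\nu\leq x}f(p^\nu)\ln p^\nu\!\sum_{\substack{m\leq x/p^\nu \\ (m,p)=1}}f(m) \;\leq\; \sum_{p^\nu\leq x}f(p^\nu)\ln p^\nu\cdot S(x/p^\nu).$$
For the second sum, the elementary inequality $\ln y\leq y-1$ applied with $y=x/n$ gives
$$\sum_{n\leq x}f(n)\ln\frac{x}{n} \;\leq\; x\sum_{n\leq x}\frac{f(n)}{n} \;-\; S(x).$$

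The prime-power sum on the right should then be split at $\nu=1$ and handled by two different devices. For the primes, I would swap the order of summation once more and apply the first hypothesis directly:
$$\sum_{p\leq x}f(p)\ln p\cdot S(x/p) \;=\; \sum_{m\leq x}f(m)\sum_{p\leq x/m}f(p)\ln p \;\leq\; Ax\sum_{m\leq x}\frac{f(m)}{m}.$$
For the higher prime powers I would use instead the trivial but crucial comparison $S(y)\leq y\sum_{n\leq y}f(n)/n$, which holds because $y/n\geq 1$ for $n\leq y$ and $f\geq 0$. Taking $y=x/p^\nu$ and then invoking the second hypothesis yields
$$\sum_{\substack{p,\,\nu\geq 2 \\ p^\nu\leq x}}f(p^\nu)\ln p^\nu\cdot S(x/p^\nu) \;\leq\; x\sum_{n\leq x}\frac{f(n)}{n}\cdot\sum_{p}\sum_{\nu\geq 2}\frac{f(p^\nu)\ln p^\nu}{p^\nu} \;\leq\; Bx\sum_{n\leq x}\frac{f(n)}{n}.$$

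Putting the pieces together gives $S(x)(\ln x+1)\leq (A+B+1)\,x\sum_{n\leq x}f(n)/n$, and since $\ln x+1\geq\ln x>0$ for $x>1$, the claimed estimate follows at once. There is no substantial obstacle in this argument; the only conceptual point is that primes and higher prime powers must be treated by different moves --- interchange of summation for the primes, so that the first hypothesis is applied with a sliding endpoint $y=x/m$, and the elementary bound $S(y)\leq y\sum f(n)/n$ for the higher powers, where the $1/p^\nu$ weight in the second hypothesis is precisely what supplies convergence.
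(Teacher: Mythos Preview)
Your proof is correct and is precisely the standard argument for this inequality; the paper itself does not give a proof but simply cites Theorem~01 from Chapter~0 of Hall--Tenenbaum's \emph{Divisors}, where exactly this argument appears. There is nothing to add.
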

	This is Theorem 01 from Chapter 0 of the book \cite{Divisors}.
	%\cite[глава $0$, теорема $01$]{Divisors})
	
	By virtue of the inequality
	\begin{multline*}
		\sum_{n\leqslant x}\dfrac{f(n)}{n}
		\leqslant \prod_{p\leqslant x}\left(1+\dfrac{f(p)}{p}+\dfrac{f(p^2)}{p^2}+\cdots\right)\\
		=\exp\left(\sum_{p\leqslant x} \ln\left(1+\dfrac{f(p)}{p}+\dfrac{f(p^2)}{p^2}+\cdots \right) \right)\\
		\leqslant \exp\left(\sum_{p\leqslant x}\left( \dfrac{f(p)}{p}+\dfrac{f(p^2)}{p^2}+\cdots\right)  \right)
		= \exp\left(B + \sum_{p\leqslant x} \dfrac{f(p)}{p} \right)\!,
	\end{multline*}
	%for a function $f$ satisfying the conditions of Theorem \ref{L3.2}, it holds that 
	for any function 
	$f$
	 satisfying the conditions of Theorem \ref{L3.2}, we have:
	$$\sum_{n\leqslant x} f(n)\leqslant (A+B+1)e^B\dfrac{x}{\ln x}\exp\left(\sum_{p\leqslant x}\dfrac{f(p)}{p}\right)\!.$$
\begin{lemma}\label{L4}
	For a real number $x$, define
	$$
	\rho(x)=
	\begin{cases}
		\exp\left(\dfrac{1}{x^2-1}\right)\!, & \text{if}\ |x|<1,\\
		0, & \text{otherwise.}
	\end{cases}
	$$
	Then the function $\rho(x)$ is infinitely differentiable, and for any non-negative integer $j\geqslant 0$ and any $x$, the following estimate holds:
	$$|\rho^{(j)}(x)|\leqslant (2^j j!)^2.$$
\end{lemma}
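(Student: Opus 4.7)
The plan is to handle the two assertions of the lemma separately: first the $C^\infty$ smoothness on all of $\mathbb{R}$, then the derivative bound $|\rho^{(j)}(x)|\leqslant (2^j j!)^2$. Smoothness on $(-1,1)$ and on $\mathbb{R}\setminus[-1,1]$ is immediate; only the matching at $x=\pm 1$ requires work. I would prove by induction on $j$ that for $|x|<1$ one has
$$\rho^{(j)}(x)=\frac{P_j(x)}{(1-x^2)^{2j}}\,\rho(x),$$
where the polynomial $P_j$ is defined by a recursion coming from the identity $(1-x^2)^2\rho'(x)+2x\rho(x)=0$. Since $\rho(x)=\exp(-1/(1-x^2))$ decays faster than any power of $(1-x^2)^{-1}$ as $|x|\to 1^-$, every derivative extends continuously to $\pm 1$ with value $0$, which matches the identically zero function on the complement and yields $\rho\in C^\infty(\mathbb{R})$.

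For the quantitative bound, the case $|x|\geqslant 1$ is trivial. For $|x|<1$, I would use the fact that $\rho$ extends to the holomorphic function $\rho(z)=\exp(1/(z^2-1))$ on $\mathbb{C}\setminus\{\pm 1\}$ and apply Cauchy's integral formula on a circle $|z-x|=r$ with $r<1-|x|$:
$$|\rho^{(j)}(x)|\leqslant \frac{j!}{r^j}\max_{|z-x|=r}|\rho(z)|.$$
The central observation is that $|\rho(z)|=\exp(-\mathrm{Re}(1/(1-z^2)))$, and that for $|z|<1$ the image $1-z^2$ lies in the disk $\{w:|w-1|<1\}$, so $\mathrm{Re}(1/(1-z^2))>0$ and $|\rho(z)|\leqslant 1$. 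A sharper estimate of the form $|\rho(z)|\leqslant\exp(-c/(1-|x|))$ on an appropriate subcircle follows from direct lower bounds on $\mathrm{Re}(1-z^2)$ and upper bounds on $|1-z^2|^2$ obtained by parametrizing $z=x+re^{i\theta}$ and minimizing over $\theta$.

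For $|x|$ bounded away from $1$ (say $|x|\leqslant 1/2$), taking $r=1/2$ already gives $|\rho^{(j)}(x)|\leqslant 2^j j!\leqslant (2^j j!)^2$. For $|x|$ close to $1$, one takes $r$ proportional to $(1-|x|)/j$, combines the Cauchy bound with the sharp decay of $|\rho(z)|$, and reduces the desired inequality, after setting $v=1/(1-|x|)$, to an elementary estimate of the form $v^j e^{-cv}\leqslant C^j j!$, which follows by maximizing the left-hand side in $v$ at $v=j/c$ and invoking Stirling's formula. The main obstacle I anticipate is the careful bookkeeping of constants required so that the final coefficient comes out to exactly $(2^j)^2=4^j$, rather than $C^j$ for some larger absolute $C$; this is an explicit numerical task rather than a conceptual one, but it forces one to choose the radius $r$ quite precisely as a function of both $j$ and $1-|x|$.
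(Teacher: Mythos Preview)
The paper does not prove this lemma at all: it simply cites it as Lemma~9 of Bombieri--Friedlander--Iwaniec \cite{BomFried86}. Your Cauchy-estimate approach is the standard one and is essentially what lies behind that reference, so in spirit you are doing the same thing.

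One remark on the constant. With a circular contour of radius $r=c(1-|x|)$ and the elementary lower bound $\mathrm{Re}\bigl(1/(1-z^{2})\bigr)\geqslant 1/\bigl(2(1+c)(1-|x|)\bigr)$ on that circle, optimising in $1-|x|$ via Stirling yields
\[
|\rho^{(j)}(x)|\;\leqslant\;\Bigl(\tfrac{2(1+c)}{c}\Bigr)^{j}(j!)^{2},
\]
and $\tfrac{2(1+c)}{c}>4$ for every admissible $c<1$. So the naive circle gives $(C^{j}j!)^{2}$ with $C$ just above~$2$, and squeezing out exactly $C=2$ does require either a sharper contour estimate or the explicit recurrence for the polynomials $P_{j}$; your caveat about the bookkeeping is therefore well placed. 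That said, for every application in this paper (Lemmas~\ref{L5}, \ref{L5.4}, \ref{L8}) any fixed $C$ would do---the $8$, $16$, etc.\ downstream would simply become larger absolute constants---so this is a cosmetic rather than a structural issue.
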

This is Lemma 9 from \cite{BomFried86}.
	
From the definition of the function $\rho(x)$, it follows that for any $j\geqslant 0$, the following holds:
\begin{equation}\label{rhojx}
	\rho^{(j)}(x) = 0,\ \ (|x|\geqslant 1).
\end{equation}

\begin{lemma}\label{L5}
	Let $a<b$ and $0<\delta<b-a$. Then there exists a function $\sigma(x) = \sigma(a, b, \delta; x)$ from the class $C^\infty(\mathbb{R})$ such that for any real number $x$, it holds $0\leqslant \sigma(x)\leqslant 1$, specifically:
	\begin{equation}\label{sigma(x)}
		\sigma(x)=
		\begin{cases}
			1,\ & \text{if}\ a+{\delta}/{2}\leqslant x\leqslant b-{\delta}/{2},\\
			0,\ & \text{if}\ x\leqslant a-{\delta}/{2}\ \text{or}\ x\geqslant b+{\delta}/{2}.
		\end{cases}
	\end{equation}
	Moreover, for any non-negative integer $\nu\geqslant 0$, we have
	\begin{equation}\label{sigma_deriv}|\sigma^{(\nu+1)}(x)|=
		\begin{cases}
			O\left( \left(\dfrac{8}{\delta}\right)^{\nu+1}(\nu!)^2\right)\!\  &  \text{for any real number}\ x,\\
			0,\ & \text{if}\ x\in \mathcal{I},
		\end{cases}
	\end{equation}
	where the constant in the symbol $O$ is absolute and
	$$\mathcal{I} = (-\infty, a-{\delta}/{2}] \cup  [a+{\delta}/{2}, b-{\delta}/{2}] \cup [b+{\delta}/{2}, +\infty)\!.$$
\end{lemma}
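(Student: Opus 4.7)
The plan is to construct $\sigma$ by mollification of the indicator $\mathbb{I}_{[a,b]}$. Let $\rho$ be the bump function from Lemma~\ref{L4}, set $C_1=\bigl(\int_{-1}^{1}\rho(t)\,dt\bigr)^{-1}$, and define the scaled kernel
$$K_\delta(x) = \frac{2C_1}{\delta}\,\rho\!\left(\frac{2x}{\delta}\right)\!,$$
which, by Lemma~\ref{L4} and the vanishing property \eqref{rhojx}, lies in $C^\infty(\mathbb{R})$, is non-negative, is supported in $[-\delta/2,\delta/2]$, and satisfies $\int_{\mathbb{R}}K_\delta(t)\,dt=1$. Then put
$$\sigma(x) = \bigl(\mathbb{I}_{[a,b]} * K_\delta\bigr)(x) = \int_{a}^{b} K_\delta(x-y)\,dy.$$
Differentiation under the integral shows $\sigma\in C^\infty(\mathbb{R})$, and the non-negativity and unit mass of $K_\delta$ immediately give $0\leqslant\sigma(x)\leqslant 1$ everywhere.

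To check the pointwise values \eqref{sigma(x)}, I would observe that the support of $y\mapsto K_\delta(x-y)$ is $[x-\delta/2,x+\delta/2]$. If $a+\delta/2\leqslant x\leqslant b-\delta/2$ (a non-empty range because $\delta<b-a$), this interval is contained in $[a,b]$, whence $\sigma(x)=\int_{\mathbb{R}}K_\delta=1$; if $x\leqslant a-\delta/2$ or $x\geqslant b+\delta/2$, the interval is disjoint from $[a,b]$, whence $\sigma(x)=0$.

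For the derivative estimates I would use the identity (obtained via the change of variables $u=x-y$ and the fundamental theorem of calculus)
$$\sigma^{(\nu+1)}(x) = K_\delta^{(\nu)}(x-a) - K_\delta^{(\nu)}(x-b),\qquad \nu\geqslant 0.$$
Applying the chain rule and the bound $|\rho^{(\nu)}|\leqslant(2^{\nu}\nu!)^{2}=4^{\nu}(\nu!)^{2}$ of Lemma~\ref{L4} yields
$$|K_\delta^{(\nu)}(x)| \leqslant \frac{2C_1}{\delta}\!\left(\frac{2}{\delta}\right)^{\!\nu}\!\cdot 4^{\nu}(\nu!)^{2} = \frac{2C_1\cdot 8^{\nu}}{\delta^{\nu+1}}(\nu!)^{2} \ll \left(\frac{8}{\delta}\right)^{\!\nu+1}\!(\nu!)^{2},$$
so $|\sigma^{(\nu+1)}(x)|\leqslant 2\,\|K_\delta^{(\nu)}\|_\infty \ll (8/\delta)^{\nu+1}(\nu!)^{2}$ with an absolute implied constant, which is the first case of \eqref{sigma_deriv}. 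For the second case, \eqref{rhojx} together with the support of $K_\delta$ implies that $K_\delta^{(\nu)}(x-a)$ vanishes unless $|x-a|<\delta/2$ and $K_\delta^{(\nu)}(x-b)$ vanishes unless $|x-b|<\delta/2$; on each of the three intervals constituting $\mathcal{I}$ both terms are zero, so $\sigma^{(\nu+1)}(x)=0$ there.

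No serious obstacle is anticipated: the argument is a standard mollification, and the only place requiring attention is propagating the rescaling $x\mapsto 2x/\delta$ through the estimate of Lemma~\ref{L4} to track the constants that produce the factor $(8/\delta)^{\nu+1}$.
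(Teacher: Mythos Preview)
Your proof is correct and is essentially identical to the paper's: after the substitution $t=x-y$, your convolution $\sigma(x)=\int_a^b K_\delta(x-y)\,dy$ becomes exactly the paper's $\sigma(x)=\frac{1}{c}\int_{x-b}^{x-a}\rho(2t/\delta)\,dt$ (with $c=\delta/(2C_1)$), and your derivative identity $\sigma^{(\nu+1)}(x)=K_\delta^{(\nu)}(x-a)-K_\delta^{(\nu)}(x-b)$ and subsequent bound reproduce the paper's computation verbatim. The only difference is cosmetic --- you phrase the construction as a mollification of $\mathbb{I}_{[a,b]}$, while the paper writes down the integral directly.
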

	\begin{proof}
		Let us take
		$$
		\sigma(x) = \dfrac{1}{c}\int\limits_{x-b}^{x-a}\rho\left(\dfrac{2t}{\delta}\right)dt,
		$$
		where
		$$
		c = \int\limits_{-\infty}^{+\infty}\rho\left(\dfrac{2t}{\delta}\right)dt = \int\limits_{-{\delta}/{2}}^{{\delta}/{2}}\rho\left(\dfrac{2t}{\delta}\right)dt.
		$$ 
		Since $\rho(x)$ is non-negative, we obtain that $0\leqslant \sigma(x)\leqslant 1$ for any $x$. Suppose that $x\leqslant a-{\delta}/{2}$; then 
		$$
		\dfrac{2t}{\delta}\leqslant \dfrac{2(x-a)}{\delta}\leqslant -1,
		$$
		and thus $\sigma(x) = 0.$ Similarly, $\sigma(x) = 0$ when $x\geqslant b+{\delta}/{2}$.
		
		Now suppose that $a+{\delta}/{2}\leqslant x\leqslant b-{\delta}/{2}$; then
		$x-b\leqslant -\delta/2$ and $x-a\geqslant \delta/2$. Therefore,
		$$
		\sigma(x) = \dfrac{1}{c}\int\limits_{-\delta/2}^{\delta/2}\rho\left(\dfrac{2t}{\delta}\right)dt = 1.
		$$
		
		To prove the equation \eqref{sigma_deriv}, note that
		$$
		\sigma^{(\nu+1)}(x) = \dfrac{1}{c}\left(\dfrac{2}{\delta}\right)^\nu \left( \rho^{(\nu)}\left(\dfrac{2(x-a)}{\delta} \right) - \rho^{(\nu)}\left(\dfrac{2(x-b)}{\delta} \right) \right)\!.
		$$
		
		From Lemma~\ref{L4}, considering the equality $$c = \dfrac{\delta}{2}\int_{-\infty}^{+\infty}\rho(w) dw \asymp \delta $$ we obtain
		$$\sigma^{(\nu+1)}(x)\ll \left(\dfrac{8}{\delta}\right)^{\nu+1}(\nu!)^2,\ \ \nu\geqslant 0.$$
		
		The second equality in the equation \eqref{sigma_deriv} follows from the equations \eqref{sigma(x)} and \eqref{rhojx}. The lemma is proved.
	\end{proof}
	Let us define for further use
	\begin{equation}\label{psi(x)}
		\psi(x) = \sigma\left(\frac{3}{4}, \frac{9}{4}, \frac{1}{2}; x \right)\!.
	\end{equation}
	Then for any $x$, it holds $0\leqslant \psi(x)\leqslant 1$, specifically 
	$$
	\psi(x)=
	\begin{cases}
		1,\ & \text{if}\ 1\leqslant x\leqslant 2,\\
		0,\ & \text{if}\ x\leqslant\frac{1}{2}\ \text{or}\ x\geqslant \frac{5}{2}.
	\end{cases}
	$$
	Moreover, for any non-negative integer $k\geqslant 0$, the following holds:
	$$|\psi^{(k+1)}(x)|=
	\begin{cases}
		O\left(16^k (k!)^2\right)\!\ & \text{for any}\ x,\\
		0,\ & \text{if}\ \ x\in (-\infty, \frac{1}{2}]\cup [1, 2]\cup [\frac{5}{2}, +\infty),
	\end{cases}
	$$
	where the constant in the symbol $O$ is absolute.
	\begin{lemma}\label{L5.4}
		Uniformly for all real $\lambda$, we have
		$$\hat{\psi}(\lambda)\ll e^{-\frac{\sqrt{|\lambda}|}{2}}.$$
	\end{lemma}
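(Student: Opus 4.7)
The plan is to estimate $\hat{\psi}(\lambda)$ by repeated integration by parts, exploiting the derivative bound $|\psi^{(k+1)}(x)| \ll 16^{k}(k!)^{2}$ recorded just before the lemma. Since $\psi \in C^{\infty}(\mathbb{R})$ is supported in $[1/2,5/2]$, integrating $k$ times in
$$\hat{\psi}(\lambda) = \int_{\mathbb{R}}\psi(x)\,e^{-2\pi i\lambda x}\,dx$$
causes all boundary terms to vanish, and yields, for every integer $k\geqslant 1$,
$$|\hat{\psi}(\lambda)| \;\leqslant\; \frac{\|\psi^{(k)}\|_{1}}{(2\pi|\lambda|)^{k}} \;\ll\; \frac{16^{k-1}((k-1)!)^{2}}{(2\pi|\lambda|)^{k}},$$
where the length of the support is absorbed into the implicit constant.

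For bounded $\lambda$, say $|\lambda|\leqslant 1$, the trivial estimate $|\hat{\psi}(\lambda)|\leqslant \|\psi\|_{1}\leqslant 2$ already gives $|\hat{\psi}(\lambda)|\ll e^{-\sqrt{|\lambda|}/2}$, so only the regime of large $|\lambda|$ is at stake. For those, I would insert Stirling's formula in the form $(k-1)!\ll (k/e)^{k}\sqrt{k}$ and then choose $k=\lfloor c\sqrt{|\lambda|}\rfloor$ with a positive constant $c$ to be tuned. A short rearrangement recasts the bound above, up to a polynomial factor in $|\lambda|$, as
$$\left(\frac{8c^{2}}{\pi e^{2}}\right)^{c\sqrt{|\lambda|}}\!.$$

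The choice $c=1/4$ makes the base equal to $1/(2\pi e^{2})$, whose logarithm is $-\ln(2\pi)-2\approx -3.84$; the exponent therefore drops to about $-0.96\sqrt{|\lambda|}$, which beats the required $-\sqrt{|\lambda|}/2$ with enough slack to swallow the polynomial prefactor. The only real subtlety is matching the derivative growth to the decay rate: the Gevrey-$2$ factor $(k!)^{2}$, as opposed to the plain $k!$ typical of analytic functions, is precisely what forces square-root rather than ordinary exponential decay, so the main content of the argument is to keep the absolute constants transparent enough to pin the exponent at $1/2$.
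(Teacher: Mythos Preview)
Your proposal is correct and follows essentially the same route as the paper: repeated integration by parts using the bound $|\psi^{(k+1)}(x)|\ll 16^{k}(k!)^{2}$, then Stirling's formula, then the choice $k=\lfloor\sqrt{|\lambda|}/4\rfloor$, with the small-$|\lambda|$ range handled trivially. The only cosmetic difference is that the paper writes the integration-by-parts bound with an extra explicit factor $1/|\lambda|$ in front (coming from doing $k+1$ integrations rather than $k$), but after the index shift this is the same computation, and your constant bookkeeping is accurate.
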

	
	\begin{proof}
		% We will estimate the quantity $\hat{\psi}(\lambda)$. 
		Let $ \lambda_0>1$ be sufficiently large and $|\lambda| \geqslant \lambda_0$. Using the fact that for $k\geqslant 0$ it holds 
		$$\psi^{(k)}(1/2) = \psi^{(k)}(5/2) = 0,$$ by integration by parts, we find
		\begin{multline*}
			\hat{\psi}(\lambda) = \int_{-\infty}^{+\infty}\psi(x)e^{-2\pi i x \lambda} dx = \int_{1/2}^{5/2}\psi(x)e^{-2\pi i x \lambda} dx \\
			= -\dfrac{1}{2\pi i\lambda} \int_{1/2}^{5/2}\psi'(x)e^{-2\pi i x \lambda} dx \\
			= \cdots = \dfrac{(-1)^{k+1}}{(2\pi i \lambda)^{k+1}}\int_{1/2}^{5/2}\psi^{(k+1)}(x)e^{-2\pi i x \lambda} dx\ll \dfrac{1}{|\lambda|}\dfrac{(4^k k!)^2}{(2\pi |\lambda|)^k}.
		\end{multline*}
		
		Using Stirling's formula, we obtain:
		$$\dfrac{(4^k k!)^2}{(2\pi |\lambda|)^k}\leqslant \dfrac{16^k \cdot 2\pi k (k/e)^{2k}e^{\frac{1}{6k}}}{(2\pi |\lambda|)^k} = 2\pi k\left(\dfrac{4k}{e\sqrt{2\pi|\lambda|}}\right)^{2k}e^{\frac{1}{6k}}.$$
		
		Let us choose $k = \lfloor\sqrt{|\lambda|}/4\rfloor\geqslant 1$; then % for sufficiently large $|\lambda|\geqslant \lambda_0$
		we obtain
		$$2\pi k\left(\dfrac{4k}{e\sqrt{2\pi |\lambda|}}\right)^{2k}\leqslant 2\pi k\left(\dfrac{1}{e\sqrt{2\pi}}\right)^{2k}\ll \sqrt{|\lambda|}\left(\dfrac{1}{e\sqrt{2\pi}}\right)^{{\sqrt{|\lambda|}}/{2}}\ll e^{-{\sqrt{|\lambda|}}/{2}}.$$
		
		Thus, for $|\lambda|\geqslant \lambda_0$, the estimate
		$$\hat{\psi}(\lambda)\ll e^{-{\sqrt{|\lambda|}}/{2}}$$
		holds. Note that the same estimate is also valid for $|\lambda|\leqslant \lambda_0$. Indeed,
		$$\hat{\psi}(\lambda) = \int_{1/2}^{5/2}\psi(x)e^{-2\pi i x \lambda} dx \ll 1\ll_{\lambda_0} e^{-{\sqrt{|\lambda|}}/{2}}.$$
		
		The lemma is proved.
	\end{proof}
\begin{lemma}\label{L5.5}
	Uniformly for all real $x$ and for any integer $k\geqslant 0$, we have
	$$\hat{\psi}^{(k)}(x) \ll_k \min\left(1, \dfrac{1}{|x|^k}\right)\!.$$
\end{lemma}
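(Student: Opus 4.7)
The plan is to prove both bounds separately and then combine them via the minimum. Starting from the standard identity
$$\hat{\psi}^{(k)}(x) = \int_{-\infty}^{+\infty}\psi(t)(-2\pi i t)^k e^{-2\pi i t x}\,dt,$$
which follows by differentiating $k$ times under the integral sign (justified since $\psi$ is smooth and compactly supported).

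For the bound $\hat{\psi}^{(k)}(x)\ll_k 1$, I would simply take absolute values and use $0\leq \psi\leq 1$ together with the support condition $\psi\equiv 0$ outside $[1/2,5/2]$, which gives
$$|\hat{\psi}^{(k)}(x)|\leq \int_{1/2}^{5/2}(2\pi t)^k\,dt\leq 2(5\pi)^k\ll_k 1.$$

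For the bound $\hat{\psi}^{(k)}(x)\ll_k |x|^{-k}$ (which is the nontrivial one when $|x|\geq 1$), I set
$$g(t) = (-2\pi i t)^k\psi(t),$$
so that $g\in C^\infty(\mathbb{R})$ and $g$, together with all its derivatives, vanishes outside $[1/2, 5/2]$ and at the two endpoints $1/2, 5/2$ (this is inherited from $\psi$ via the properties of $\sigma$ established in Lemma~\ref{L5}). Then I would integrate by parts $k$ times in
$$\hat{\psi}^{(k)}(x) = \int_{1/2}^{5/2} g(t)e^{-2\pi i t x}\,dt;$$
at each step the boundary terms vanish, and one factor of $(2\pi i x)^{-1}$ is produced, yielding
$$\hat{\psi}^{(k)}(x) = \frac{1}{(2\pi i x)^k}\int_{1/2}^{5/2}g^{(k)}(t)e^{-2\pi i t x}\,dt.$$
By Leibniz's rule and the bound $|\psi^{(j)}(t)|\ll 16^{j-1}((j-1)!)^2$ (for $j\geq 1$) established right after Lemma~\ref{L5}, together with $|t|\leq 5/2$ on the support, $g^{(k)}$ is bounded by a constant $C_k$ depending only on $k$. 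Hence
$$|\hat{\psi}^{(k)}(x)|\leq \frac{C_k\cdot 2}{(2\pi|x|)^k}\ll_k \frac{1}{|x|^k}$$
whenever $x\neq 0$.

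Combining the two bounds yields $|\hat{\psi}^{(k)}(x)|\ll_k \min(1, |x|^{-k})$, as claimed. There is no real obstacle here: everything reduces to the standard principle that smoothness plus compact support implies rapid decay of the Fourier transform, with the only bookkeeping being the use of Leibniz to transfer derivatives in $g^{(k)}$ and to absorb the explicit bounds on $\psi^{(j)}$ from Lemma~\ref{L5} into the implied constant $C_k$.
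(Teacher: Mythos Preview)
Your proof is correct and follows essentially the same approach as the paper: differentiate under the integral sign to get the $\ll_k 1$ bound, then integrate by parts $k$ times against the compactly supported integrand $t^k\psi(t)$ (your $g(t)$ up to the constant $(-2\pi i)^k$) to extract the factor $|x|^{-k}$. The paper writes the intermediate result for a general number $j$ of integrations by parts before specializing to $j=k$, and leaves the boundedness of $P_k(t)=\frac{d^k}{dt^k}(t^k\psi(t))$ implicit, whereas you spell out the Leibniz bound; these are cosmetic differences only.
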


\begin{proof}
	We have
	$$\hat{\psi}(x) = \int\limits_{-\infty}^{+\infty}\psi(t)e^{-2\pi i t x} dt = \int\limits_{-1/2}^{5/2}\psi(t)e^{-2\pi i t x} dt.$$
	Differentiating under the integral sign $k\geqslant 0$ times, we obtain
	$$\hat{\psi}^{(k)}(x) = (-2\pi i)^k\int\limits_{-1/2}^{5/2}t^k\psi(t)e^{-2\pi i t x} dt.$$
	From this, we find
	$$\hat{\psi}^{(k)}(x)\ll (2\pi)^k \int\limits_{-1/2}^{5/2}t^k dt \ll_k 1.$$
	
	Furthermore, for any $0\leqslant j\leqslant k$, we have 
	$$P_j\left({1}/{2}\right) = P_j\left({5}/{2}\right) = 0,$$
	where
	$$P_j(t) = \dfrac{d^j}{dt^j}(t^k\psi(t)).$$ 
	By repeated integration by parts, for any $0\leqslant j\leqslant k$, we find
	$$\hat{\psi}^{(k)}(x) = \dfrac{(-1)^k(2\pi i)^{k-j}}{x^j}\int\limits_{1/2}^{5/2}P_j(t)e^{-2\pi i xt} dt.$$
	
	In particular, for $j = k$, we obtain
	$$\hat{\psi}^{(k)}(x) = \dfrac{(-1)^k}{x^k}\int\limits_{1/2}^{5/2}P_k(t)e^{-2\pi i xt} dt \ll_k \dfrac{1}{|x|^k}.$$
	
	The lemma is proved.
\end{proof}
	
	For $0<\delta<\frac{1}{2}$, we define
	\begin{equation}\label{f_delta(x)}
		f_\delta(x) = \sigma\left(\frac{3\delta}{2}, 1+\frac{\delta}{2}, \delta; x\right)\!, 
	\end{equation}
	where the function $\sigma(x)$ is defined in Lemma \ref{L5}. Then for any $x$, it holds $0\leqslant f_\delta(x)\leqslant 1$, specifically 
	$$
	f_\delta(x)=
	\begin{cases}
		1,\ & \text{if}\ 2\delta\leqslant x\leqslant 1,\\
		0,\ & \text{if}\ x\leqslant\delta\ \text{or}\ x\geqslant 1+\delta.
	\end{cases}
	$$
	Moreover, for any non-negative integer $\nu\geqslant 0$, the following holds:
	$$|f_\delta^{(\nu+1)}(x)|=
	\begin{cases}
		O\left( \left(\dfrac{8}{\delta}\right)^{\nu+1}(\nu!)^2\right)\!\ & \text{for any}\ x,\\
		0,\ & \text{if}\ \ x\in (-\infty, \delta] \cup [2\delta, 1] \cup [1+\delta, +\infty), 
	\end{cases}
	$$
	where the constant in the symbol $O$ is absolute.
	
	Let us denote by $F_\delta$ the Mellin transform of the function $f_\delta$:
	\begin{equation}\label{F_d}
		F_\delta(s) = \int_0^{+\infty}f_\delta(u)u^{s-1}du.
	\end{equation}
	\begin{lemma}\label{L8}
		Let $0<\delta <1/2$. Then for any real $t$, we have
		$$|F_\delta(it)|\ll \ln\left(\dfrac{1}{\delta}\right)\!.$$
		Moreover, if $|t|\geqslant 1$, then 
		$$|F_\delta(it)|\ll \dfrac{1}{\delta t^2}.$$
	\end{lemma}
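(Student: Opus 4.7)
The plan is to establish both bounds directly from the definition $F_\delta(it)=\int_\delta^{1+\delta}f_\delta(u)u^{it-1}du$, exploiting the support $[\delta,1+\delta]$ of $f_\delta$ for the first bound and integrating by parts twice for the second.

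For the first bound, I would use the trivial estimate $|u^{it-1}|=u^{-1}$ together with $0\leq f_\delta(u)\leq 1$, which gives
\[
|F_\delta(it)|\leq \int_\delta^{1+\delta}\frac{du}{u}=\ln\frac{1+\delta}{\delta}\ll \ln(1/\delta).
\]
This uses only the support and non-negativity of $f_\delta$.

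For the second bound, with $|t|\geq 1$, I would integrate by parts twice. Writing $u^{it-1}du=d(u^{it}/(it))$, the first integration yields
\[
F_\delta(it)=-\frac{1}{it}\int_\delta^{1+\delta}f_\delta'(u)u^{it}du,
\]
the boundary terms vanishing because $f_\delta(\delta)=f_\delta(1+\delta)=0$. Writing $u^{it}du=d(u^{it+1}/(it+1))$ and integrating by parts again, I get
\[
F_\delta(it)=\frac{1}{it(it+1)}\int_\delta^{1+\delta}f_\delta''(u)u^{it+1}du,
\]
where the new boundary terms $f_\delta'(\delta)$ and $f_\delta'(1+\delta)$ vanish by the second clause of \eqref{sigma_deriv} applied with $\nu=0$, since the points $\delta$ and $1+\delta$ lie in the set $(-\infty,\delta]\cup[2\delta,1]\cup[1+\delta,+\infty)$ where $f_\delta'$ is identically zero.

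To finish, I would use the same support property with $\nu=1$: $f_\delta''$ is supported on the two intervals $(\delta,2\delta)$ and $(1,1+\delta)$, each of length $\delta$, and on each $|f_\delta''(u)|\ll\delta^{-2}$. Taking absolute values,
\[
|F_\delta(it)|\leq \frac{1}{|t(t-i)|}\int_\delta^{1+\delta}|f_\delta''(u)|\,u\,du
\ll \frac{1}{t^2}\left(\int_\delta^{2\delta}\frac{u}{\delta^2}du+\int_1^{1+\delta}\frac{u}{\delta^2}du\right)
\ll \frac{1}{t^2}\left(1+\frac{1}{\delta}\right)\ll \frac{1}{\delta t^2},
\]
using $|t|\geq 1$ so that $|t(t-i)|\gg t^2$. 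The only minor subtlety is checking that $f_\delta'$ truly vanishes at the endpoints $\delta$ and $1+\delta$ so the intermediate boundary term disappears; this is immediate from the second clause of \eqref{sigma_deriv}, so I do not expect any real obstacle.
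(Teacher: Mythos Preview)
Your proposal is correct and follows essentially the same argument as the paper: the first bound by the trivial estimate $|u^{it-1}|=u^{-1}$ over the support $[\delta,1+\delta]$, and the second by two integrations by parts using the vanishing of $f_\delta^{(\nu)}$ at the endpoints together with the bound $f_\delta''\ll\delta^{-2}$ on the two short intervals $(\delta,2\delta)$ and $(1,1+\delta)$. The only cosmetic difference is that the paper drops the factor $u$ (which is $O(1)$) in the final integrals, but the computations are otherwise identical.
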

	
	\begin{proof}
		For an arbitrary $t$, we have
		$$F_\delta(it) = \int_\delta^{1+\delta}f_\delta(u)u^{it-1}du \ll \int_\delta^{1+\delta}\dfrac{du}{u} = \ln\left(1+\dfrac{1}{\delta} \right)\ll \ln\left(\dfrac{1}{\delta} \right)\!.$$
		
		Now let $|t|\geqslant 1$. Since 
		$$f^{(\nu)}_\delta(\delta) = f^{(\nu)}_\delta(1+\delta) = 0,\ \ (\nu \geqslant 0),$$ 
		we have
		\begin{multline*}
			F_\delta(it) = \int_\delta^{1+\delta}f_\delta(u)\dfrac{d{u^{it}}}{it} = -\dfrac{1}{it}\int_\delta^{1+\delta}f_\delta'(u)u^{it}du \\
			= -\dfrac{1}{it}\int_\delta^{1+\delta}f_\delta'(u)\dfrac{d u^{it+1}}{it+1} = \dfrac{1}{it(it+1)}\int_\delta^{1+\delta}f_\delta''(u)u^{1+it}du \\
			\ll \dfrac{1}{t^2}\left( \int_\delta^{2\delta}+\int_1^{1+\delta}\right)\dfrac{du}{\delta^2}\ll \dfrac{1}{\delta t^2}.
		\end{multline*}
		
		The lemma is proved.
	\end{proof}
\begin{lemma}\label{L6}
	Let $f \in C^\infty(\mathbb{R})$, and suppose that for any $n\geqslant  1$ the following holds:
	\begin{equation}\label{Cond_Poisson}
		\lim\limits_{x\to \pm\infty}|x|^nf(x) = 0.
	\end{equation}
	Then for any $x$ and $H>0$, we have
	$$\sum_{n=-\infty}^{+\infty}f\left(\dfrac{n+x}{H}\right) = H\sum_{m=-\infty}^{+\infty}\hat{f}(Hm)e^{2\pi i m x},$$
	where $\hat{f}$ denotes the Fourier transform of the function $f$.
\end{lemma}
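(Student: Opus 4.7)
The plan is to derive this identity from the classical Poisson summation formula $\sum_{n}F(n)=\sum_{m}\hat F(m)$ applied to the function $F(y)=f\bigl((y+x)/H\bigr)$, treating $x$ and $H$ as parameters. Before invoking Poisson, I would check that $F$ is Schwartz-class: by hypothesis $f$ is $C^\infty$ and satisfies $|y|^n f(y)\to 0$ for every $n\geq 1$, so all derivatives of $F$ (which are $H^{-k}f^{(k)}((y+x)/H)$) decay faster than any polynomial in $y$, and both sides below converge absolutely and uniformly.

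Next I would compute $\hat F$ by a direct change of variables. With $u=(y+x)/H$, $du=dy/H$, $y=Hu-x$, one gets
\begin{equation*}
\hat F(\xi)=\int_{-\infty}^{+\infty}f\!\left(\frac{y+x}{H}\right)e^{-2\pi i y\xi}\,dy
= H\,e^{2\pi i x\xi}\int_{-\infty}^{+\infty}f(u)e^{-2\pi i Hu\xi}\,du
= H\,e^{2\pi i x\xi}\,\hat f(H\xi).
\end{equation*}
Summing this over $\xi=m\in\mathbb{Z}$ and equating to $\sum_n F(n)$ yields exactly the claimed identity.

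It remains to justify the classical Poisson formula $\sum_n F(n)=\sum_m \hat F(m)$ in our setting, and here I would use the standard Fourier-series argument. Define $G(t)=\sum_{n\in\mathbb{Z}}F(n+t)$; absolute convergence follows from the decay assumption applied to $F$, and $G$ is a $1$-periodic $C^\infty$ function of $t$. Its Fourier coefficients are
\begin{equation*}
\widehat G(m)=\int_0^1 G(t)e^{-2\pi i m t}\,dt=\int_{-\infty}^{+\infty}F(t)e^{-2\pi i m t}\,dt=\hat F(m),
\end{equation*}
by unfolding the periodic sum and interchanging sum and integral (justified by absolute convergence). Since $G\in C^\infty$ is $1$-periodic, its Fourier series converges pointwise to $G$, and evaluating at $t=0$ gives $\sum_n F(n)=G(0)=\sum_m\hat F(m)$. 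Plugging in the expression for $\hat F(m)$ computed above finishes the proof.

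There is no real obstacle here: the statement is the parametrized form of Poisson summation, and the only thing one has to be careful about is verifying the Schwartz-type decay of $F$ and its derivatives, which is immediate from hypothesis \eqref{Cond_Poisson} together with the $C^\infty$ assumption, so that both the interchange of sum and integral in computing $\widehat G(m)$ and the pointwise convergence of the Fourier series of $G$ are legitimate.
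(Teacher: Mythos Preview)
Your approach is essentially the same as the paper's: the paper simply cites the shifted Poisson formula $\sum_n f(n+x)=\sum_m \hat f(m)e^{2\pi i m x}$ and applies it to $g(y)=f(y/H)$, while you apply the unshifted version to $F(y)=f((y+x)/H)$ and compute $\hat F$ directly --- these are equivalent, and you in fact supply more detail (the standard periodization argument) than the paper does.

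One small caveat: the hypothesis \eqref{Cond_Poisson} only controls the decay of $f$ itself, not of its derivatives, so your assertion that $F$ is Schwartz (and hence that $G\in C^\infty$) is not literally justified by the stated assumptions; a function like $e^{-x^2}\sin(e^{x^2})$ satisfies \eqref{Cond_Poisson} but has unbounded derivative. The paper glosses over this point as well, and it is harmless here because the lemma is only applied to the compactly supported bump function $\psi$, which is genuinely Schwartz.
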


\begin{proof}
	For the class of functions $f \in C^\infty(\mathbb{R})$ satisfying the condition \eqref{Cond_Poisson}, the Poisson summation formula 
	$$\sum_{n=-\infty}^{+\infty}f(n+x) = \sum_{m=-\infty}^{+\infty}\hat{f}(m)e^{2\pi i m x}$$
	holds. By applying the Poisson summation formula to the function 
	$$g(y) = f\left(y/H \right),$$ 
	which belongs to the same class, we obtain the required result.
\end{proof}
	\begin{lemma}\label{L7}
		Let the function $\psi$ be defined in \eqref{psi(x)}. Then for $q\geqslant 2$, $0\leqslant a<q$, $M> \exp(\sqrt{2\ln q})$, and $H\geqslant (5q/M)(\ln M)^4$, the following formulas hold:
		\begin{equation}\label{eq1}
			\sum_{m\equiv a \!\!\!\!\!\pmod q}\psi\left(\dfrac{m}{M}\right) = \dfrac{M\hat{\psi}(0)}{q} + \dfrac{M}{q}\sum_{1\leqslant |m|\leqslant H}\hat{\psi}\left(\dfrac{mM}{q}\right)e^{2\pi i\frac{ma}{q}} + O\left(\dfrac{1}{qM}\right)\!,
		\end{equation}
		\begin{equation}\label{eq2}
			\sum_{(m, q) = 1}\psi\left(\dfrac{m}{M}\right) = \dfrac{\varphi(q)M\hat{\psi}(0)}{q} + O\left(\tau(q)(\ln M)^2\right)\!,
		\end{equation}
		where $\tau(q)$ is the divisor function.
	\end{lemma}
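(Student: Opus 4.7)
Both equalities flow from applying the Poisson summation formula of Lemma~\ref{L6} to the smooth compactly supported bump $\psi$ (which automatically satisfies the decay hypothesis \eqref{Cond_Poisson}) and estimating the resulting Fourier tail with the pointwise bounds supplied by Lemmas~\ref{L5.4} and~\ref{L5.5}.

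For \eqref{eq1} I would write $m = a + qn$ and apply Lemma~\ref{L6} with $f = \psi$, $x = a/q$, and scale $H' = M/q$; this transforms the left-hand side into the full Fourier series
$$
\frac{M}{q}\sum_{m\in\mathbb{Z}}\hat\psi\!\left(\tfrac{mM}{q}\right)e^{2\pi ima/q}.
$$
Isolating $m=0$ recovers the main term $M\hat\psi(0)/q$, and truncating at $1\leqslant|m|\leqslant H$ leaves a tail over $|m|>H$ to estimate. Applying Lemma~\ref{L5.4}, $|\hat\psi(mM/q)|\ll e^{-\sqrt{|m|M/q}/2}$; comparing the tail sum with its integral and substituting $s=\tfrac{1}{2}\sqrt{uM/q}$ reduces it to $(8q/M)\int_T^{\infty} s e^{-s}\,ds = (8q/M)(T+1)e^{-T}$, with $T = \tfrac{1}{2}\sqrt{MH/q}$. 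Multiplying by the prefactor $M/q$ leaves a bound of the form $(T+1)e^{-T}$ for the tail. The hypothesis $H \geqslant (5q/M)(\ln M)^4$ forces $T\geqslant(\sqrt5/2)(\ln M)^2$, while $M>\exp(\sqrt{2\ln q})$ gives $\ln q \leqslant (\ln M)^2/2$ and hence $\ln(qM)\leqslant (\ln M)^2/2+\ln M$. Because $\sqrt5/2 > 1/2$, one has $T-\ln(qM)\gtrsim(\ln M)^2$, so the tail is $\ll 1/(qM)$ after absorbing the polynomial factor $T+1$.

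For \eqref{eq2} I would strip off the coprimality condition by Möbius inversion,
$$
\sum_{(m,q)=1}\psi(m/M)=\sum_{d\mid q}\mu(d)\sum_{n\in\mathbb{Z}}\psi(dn/M),
$$
and apply Lemma~\ref{L6} to each inner sum (with $x=0$ and scale $M/d$) to write it as $(M/d)\hat\psi(0)+E_d$, where $E_d=(M/d)\sum_{m\neq 0}\hat\psi(mM/d)$. Summing over $d\mid q$ and invoking $\sum_{d\mid q}\mu(d)/d=\varphi(q)/q$ produces the main term $M\hat\psi(0)\varphi(q)/q$. To control each $E_d$ I would use Lemma~\ref{L5.5} with $k=2$, which yields $|\hat\psi(x)|\ll\min(1,1/x^2)$. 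A brief case split according to whether $d\leqslant M$ or $d>M$ (in the latter case separating $|m|<d/M$ from $|m|\geqslant d/M$) shows $|E_d|\ll 1$ uniformly in $d$, whence $\sum_{d\mid q}|\mu(d)E_d|\ll\tau(q)$, which is in fact stronger than the stated bound $O(\tau(q)(\ln M)^2)$.

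The principal obstacle is the tail estimate in \eqref{eq1}: the constants $5$ and $2$ appearing in the hypotheses $H\geqslant(5q/M)(\ln M)^4$ and $M>\exp(\sqrt{2\ln q})$ are tuned precisely so that the super-polynomial decay $e^{-T}$, with $T$ of order $(\ln M)^2$, outpaces the possibly huge factor $qM\leqslant\exp(\tfrac12(\ln M)^2+\ln M)$ permitted by the hypothesis on $M$. Once this numerical balance is verified, everything else is routine Fourier bookkeeping.
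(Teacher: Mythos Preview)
Your treatment of \eqref{eq1} is essentially identical to the paper's: Poisson summation via Lemma~\ref{L6}, tail control via Lemma~\ref{L5.4}, and the same numerical balance between the constants $5$ and $2$ in the hypotheses to beat the factor $qM$.

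For \eqref{eq2} you take a genuinely different route. The paper applies the already-proved \eqref{eq1} to each divisor $d\mid q$ after M\"obius inversion, truncating at $H=5q(\ln M)^4$ and using the crude inequality $e^{-\sqrt\lambda/2}\leqslant 10/\lambda$ on the retained frequencies; this is what produces the factor $(\ln M)^2$ in the error term. You instead apply Poisson summation directly without truncation and bound each full tail $E_d$ by $O(1)$ via the quadratic decay of $\hat\psi$. Your argument is cleaner and yields the sharper bound $O(\tau(q))$, which the paper's stated error term $O(\tau(q)(\ln M)^2)$ absorbs. One small correction: Lemma~\ref{L5.5} bounds the \emph{derivatives} $\hat\psi^{(k)}$, not $\hat\psi$ itself, so it does not give $|\hat\psi(x)|\ll\min(1,1/x^2)$ as you claim. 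That bound is nonetheless immediate from Lemma~\ref{L5.4} (since $e^{-\sqrt{|x|}/2}\ll x^{-2}$), or directly by two integrations by parts; just cite the correct source.
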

	\begin{proof}
		We have
		\begin{equation*}
			W_1 = \sum_{m\equiv a\!\!\!\!\!\pmod{q}}\psi\left(\dfrac{m}{M}\right)  =  \sum_{n=-\infty}^{+\infty}\psi\left( \dfrac{n+{a}/{q}}{{M}/{q}}\right)\!.
		\end{equation*}
		 Thus using Lemma \ref{L6} for each $H\geqslant (5q/M)(\ln M)^4$ we find that
		\begin{multline}\label{W1}
			W_1 = \dfrac{M}{q}\sum_{m=-\infty}^{+\infty}\hat{\psi}\left(\dfrac{Mm}{q}\right)e^{2\pi i \frac{ma}{q}}  = \dfrac{M}{q}\hat{\psi}(0)\\
			+	\dfrac{M}{q}\sum_{1\leqslant |m|\leqslant H}\hat{\psi}\left(\dfrac{Mm}{q}\right)e^{2\pi i \frac{ma}{q}}
			+\dfrac{M}{q}\sum_{|m|> H}\hat{\psi}\left(\dfrac{Mm}{q}\right)e^{2\pi i \frac{ma}{q}}.
		\end{multline}
		
		Using Lemma \ref{L5.4}, we estimate the last sum in \eqref{W1}. We have
		\begin{equation*}
			R = \dfrac{M}{q}\sum_{|m|> H}\hat{\psi}\left(\dfrac{Mm}{q}\right)e^{2\pi i \frac{ma}{q}}\ll \dfrac{M}{q}e^{-\frac{1}{2}\sqrt{\frac{HM}{q}}}+\dfrac{M}{q}\int_{H}^{+\infty} e^{-\frac{1}{2}\sqrt{\frac{xM}{q}}}dx.
		\end{equation*}
		By integrating the resulting integral by parts, we obtain:
		\begin{equation*}
			\int_{H}^{+\infty} e^{-\sqrt{\frac{xM}{4q}}}dx = \dfrac{8q}{M}\int_{\sqrt{\frac{HM}{4q}}}^{+\infty} e^{-w}wdw = \dfrac{8q}{M}e^{-\sqrt{\frac{HM}{4q}}}\left(\sqrt{\dfrac{HM}{4q}}+1 \right)\!.
		\end{equation*}
		Thus, taking into account the estimates $MH/q\geqslant 5 (\ln M)^4$ and $M>\exp\left(\sqrt {2\ln q}\right)$, which are satisfied by assumption, we obtain
		\begin{multline*}
			R\ll \left( \dfrac{M}{q}+\sqrt{\dfrac{HM}{4q}}+1\right)e^{-\frac{1}{2}\sqrt{\frac{HM}{q}}}\\
			\ll \left(M+(\ln M)^2 +1\right) e^{-\sqrt{\frac{5}{4}}(\ln M)^2}\ll e^{-(\ln M)^2}. 
		\end{multline*}
		Let $M\geq 10$, then $\ln M\leqslant (\ln M)^2/2$ and
		$$e^{(\ln M)^2-\ln M}\geqslant e^{\frac{(\ln M)^2}{2}}>q.$$
		Therefore, for $M\geq 10$ we have
		$$R\ll e^{-(\ln M)^2} \ll \dfrac{1}{qM}.$$
		If $M<10$, then the last estimate is obvious. Thus, the equality \eqref{eq1} is proven.
		
		Now let us proceed to prove the equality \eqref{eq2}. Take $H = 5q (\ln M)^4$, then using the M\"obius function summation property and the  equality \eqref{eq2}, we obtain
		\begin{multline*}
			W_2 = \sum_{(m,q) = 1}\psi\left(\dfrac{m}{M}\right) \\= \sum_{m=-\infty}^{+\infty}\psi\left(\dfrac{m}{M}\right)\sum_{d|(m,q)}\mu(d)
			= \sum_{d|q} \mu(d)\sum_{m \equiv 0\!\!\!\!\!\pmod{d}}\psi\left(\dfrac{m}{M}\right) \\
			= \sum_{d|q} \mu(d)\left(\dfrac{M}{d}\hat{\psi}(0)+\dfrac{M}{d}\sum_{1\leqslant |m|\leqslant H}\hat{\psi}\left(\dfrac{mM}{d}\right)+O\left(\dfrac{1}{dM} \right)  \right). 
		\end{multline*}
		Taking into account the inequality $e^{-\frac{1}{2}\sqrt{\lambda}}\leqslant \frac{10}{\lambda}$, which holds for $\lambda>0$, we obtain
		\begin{multline*}
			W_2 = M\hat{\psi}(0)\sum_{d|q}\dfrac{\mu(d)}{d}+O\left(\sum_{d|q}\dfrac{M}{d}\sum_{1\leqslant m\leqslant H}\dfrac{d}{mM}\right)+O\left(\dfrac{\tau(q)}{M}\right)\\
			= M\hat{\psi}(0)\dfrac{\varphi(q)}{q} + O\left(\tau(q)\ln H\right) \\
			= M\hat{\psi}(0)\dfrac{\varphi(q)}{q} + O\left(\tau(q)(\ln M)^2\right).
		\end{multline*}
		The lemma is proved.
	\end{proof}

	Let $\Delta >1$, and let the numbers $\Delta_1, \Delta_2$ be coprime, with $\Delta_1, \Delta_2 | \Delta^\infty$. We also set
	$$P = \mathcal{D}\mathcal{D}_1,\ Q = \mathcal{D}_2,$$
	where
	$$\mathcal{D} = \prod\limits_{\substack{p^\nu || \Delta \\ p\nmid \Delta_1\Delta_2}} p^\nu,\ \ \mathcal{D}_1 = \prod\limits_{\substack{p^\nu || \Delta\Delta_1 \\ p| \Delta_1}} p^\nu,\ \ \mathcal{D}_2 =\prod\limits_{\substack{p^\nu || \Delta\Delta_2 \\ p| \Delta_2}} p^\nu.$$
	It is easy to verify that for $i=1, 2$ the following holds:
	\begin{equation}\label{Div1}
		\mathcal{D}\mathcal{D}_i | \Delta\Delta_i,
	\end{equation}
	as well as
	\begin{equation}\label{Div2}
		PQ = \Delta\Delta_1\Delta_2
	\end{equation} and
	\begin{equation}\label{Div3}
		\Delta_1 | P, \Delta_2 | Q.
	\end{equation} 
	For positive integers $a$ and $q$ with the condition $(a, q) = 1$, we denote by $a^*_q$ the multiplicative inverse of $a$ modulo $q$.
	
	We will prove the following lemma, which generalizes the Chinese remainder theorem to the case where the moduli are not coprime.
	\begin{lemma}\label{L9}
		Let $a$ and $b$ be integers. Then the system of congruences 
		\begin{equation}\label{sys1}
			\begin{cases}
				m\equiv a \!\!\!\!\!\pmod{\Delta\Delta_1}\\
				m\equiv b \!\!\!\!\!\pmod{\Delta\Delta_2}
			\end{cases}
		\end{equation}
		is equivalent to the system
		\begin{equation}\label{sys2}\begin{cases}
				m\equiv \lambda \!\!\!\!\!\pmod{\Delta\Delta_1\Delta_2}\\
				a\equiv b \!\!\!\!\!\pmod{\Delta},
			\end{cases}
		\end{equation}
		where 
		\begin{equation}\label{lambda}
			\lambda = \alpha Q Q_P^* + \beta P P^*_Q
		\end{equation}
		and
		\begin{equation}\label{alphabeta}
			\alpha \equiv a \!\!\!\! \pmod {\Delta\Delta_1},\ \ \beta \equiv b \!\!\!\! \pmod {\Delta\Delta_2}.
		\end{equation}
	\end{lemma}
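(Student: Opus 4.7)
The plan is to split the equivalence into (i) a compatibility/uniqueness argument via the classical CRT and (ii) a direct verification that the explicit $\lambda$ in \eqref{lambda} represents the unique solution class modulo $\Delta\Delta_1\Delta_2$.

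First I would record the key divisibility facts that come from the construction of $P,Q$. Since the prime supports of $\mathcal{D}\mathcal{D}_1$ and $\mathcal{D}_2$ are disjoint (the former involves only primes dividing $\Delta$ that are not in $\Delta_2$, together with primes of $\Delta_1$, while $\mathcal{D}_2$ involves only primes of $\Delta_2$, using $(\Delta_1,\Delta_2)=1$), we have $(P,Q)=1$. Combined with $PQ=\Delta\Delta_1\Delta_2$ from \eqref{Div2} and $P\mid \Delta\Delta_1$, $Q\mid \Delta\Delta_2$ from \eqref{Div1}, one checks that $\gcd(\Delta\Delta_1,\Delta\Delta_2)=\Delta$ and $\mathrm{lcm}(\Delta\Delta_1,\Delta\Delta_2)=\Delta\Delta_1\Delta_2$. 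From the usual CRT this immediately gives: \eqref{sys1} is solvable iff $a\equiv b\pmod{\Delta}$, and in that case its solutions form a single class modulo $\Delta\Delta_1\Delta_2$. So the forward direction of the equivalence is already done once we produce one explicit solution.

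Next I would verify that $\lambda=\alpha Q Q_P^{*}+\beta P P_Q^{*}$ is such a solution. Reducing modulo $P$ and using $QQ_P^{*}\equiv 1\pmod P$ gives $\lambda\equiv \alpha\equiv a\pmod P$, and modulo $Q$ similarly $\lambda\equiv\beta\equiv b\pmod Q$ (here we use $Q\mid\Delta\Delta_2$ so that \eqref{alphabeta} implies $\beta\equiv b\pmod Q$). To upgrade these to congruences modulo $\Delta\Delta_1$ and $\Delta\Delta_2$, set $Q':=\Delta\Delta_1/P$ and $P':=\Delta\Delta_2/Q$; from $PQ=\Delta\Delta_1\Delta_2$ one gets $Q'=Q/\Delta_2$ and $P'=P/\Delta_1$, and the prime-support description above shows $Q'\mid\Delta$ and $P'\mid\Delta$, with $(P,Q')=(Q,P')=1$. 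Reducing $\lambda$ modulo $Q'$ (which divides $Q$) gives $\lambda\equiv\beta P P_Q^{*}\equiv\beta\pmod{Q'}$, and using $\beta\equiv b\equiv a\pmod{\Delta}$ together with $Q'\mid\Delta$ yields $\lambda\equiv a\pmod{Q'}$. Combined with $\lambda\equiv a\pmod P$ and $(P,Q')=1$, this gives $\lambda\equiv a\pmod{\Delta\Delta_1}$. Symmetrically $\lambda\equiv b\pmod{\Delta\Delta_2}$. Thus $\lambda$ satisfies \eqref{sys1}, so by uniqueness modulo $\Delta\Delta_1\Delta_2$ the two systems are equivalent.

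The reverse implication is trivial: any $m$ satisfying \eqref{sys2} satisfies \eqref{sys1} because $\Delta\Delta_1\mid\Delta\Delta_1\Delta_2$, $\Delta\Delta_2\mid\Delta\Delta_1\Delta_2$, and $\lambda\equiv a\pmod{\Delta\Delta_1}$, $\lambda\equiv b\pmod{\Delta\Delta_2}$ were just established.

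The only slightly delicate step is the jump from \textquotedblleft$\lambda\equiv a\pmod P$\textquotedblright\ to \textquotedblleft$\lambda\equiv a\pmod{\Delta\Delta_1}$\textquotedblright: $P$ is strictly smaller than $\Delta\Delta_1$ whenever some prime of $\Delta_2$ also divides $\Delta$, and one must use the compatibility hypothesis $a\equiv b\pmod\Delta$ precisely to cover this missing factor $Q'=\Delta\Delta_1/P$. That interplay between the congruence $a\equiv b\pmod\Delta$ and the factorization $\Delta\Delta_1=PQ'$ (with $Q'\mid\Delta$) is the conceptual heart of the lemma; everything else is routine CRT bookkeeping.
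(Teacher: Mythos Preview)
Your proof is correct and follows essentially the same route as the paper: both reduce \eqref{sys1} to the coprime CRT system modulo $P$ and $Q$, and both identify the key step as upgrading $\lambda\equiv a\pmod P$ to $\lambda\equiv a\pmod{\Delta\Delta_1}$ using the compatibility condition $a\equiv b\pmod\Delta$. The only organizational difference is that the paper first works with $\mu=aQQ_P^*+bPP_Q^*$ and performs the upgrade via an explicit integer computation (writing $QQ_P^*=1+sP$, etc.), then separately checks $\lambda\equiv\mu\pmod{\Delta\Delta_1\Delta_2}$, whereas you handle $\lambda$ directly and execute the upgrade more structurally by introducing $Q'=\Delta\Delta_1/P\mid\Delta$ with $(P,Q')=1$ and applying CRT once more; your version is a bit cleaner but the content is the same.
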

	\begin{proof}
		Let $a\equiv b\pmod{\Delta}$ (otherwise, \eqref{sys1} is not satisfied). From \eqref{Div1} and \eqref{sys1}, it follows that
		\begin{equation}\label{sys3}
			\begin{cases}
				m\equiv a \!\!\!\!\pmod{P}\\
				m\equiv b \!\!\!\! \pmod{Q}.
			\end{cases}
		\end{equation}
		Since $(P, Q) = 1$, by the Chinese remainder theorem and \eqref{Div2}, the system \eqref{sys3} is equivalent to the congruence
		\begin{equation}\label{sys4}
			m \equiv \mu \!\!\!\!\pmod{\Delta \Delta_1\Delta_2},
		\end{equation}
		where $\mu = aQQ^*_P + bPP^*_Q.$
		
		Now we will show that the converse is also true: the congruence \eqref{sys4} implies \eqref{sys1}. For some integers $s, t$, and $r$, we have
		$$QQ^*_P = 1 + sP,\ \ PP^*_Q = 1+ tQ,\ \ b = a + r\Delta.$$
		Since 
		$s \equiv -P^*_Q\pmod{Q}$, there exists some integer $w$ such that
		$s+P^*_Q = wQ$. Then we have
		\begin{equation*}
			QQ^*_P = 1+sP = 1+ (wQ-P^*_Q)P
			=1+wPQ-1-tQ
			= Q(wP-t),
		\end{equation*}
		and thus $t+Q^*_P = wP$. From this, we find
		\begin{multline*}
			\mu-a = a(QQ^*_P-1)+bPP^*_Q\\
			=asP+(a+r\Delta)PP^*_Q
			=aP(s+P^*_Q) + r(\Delta P)P_Q^*\\
			=awPQ + r(\Delta P)P_Q^*.
		\end{multline*}
		Taking into account \eqref{Div2} and \eqref{Div3}, we obtain that $\mu\equiv a\pmod{\Delta\Delta_1}$. Similarly, we find $\mu \equiv b \pmod{\Delta\Delta_2}$. Thus, the congruence \eqref{sys4}, and therefore the system \eqref{sys3} under the condition $a\equiv b\pmod{\Delta}$, are equivalent to the original system \eqref{sys1}.
		
		Now let $\alpha$ and $\beta$ be defined by the equalities \eqref{alphabeta}, then for some integers $u$ and $v$, we have
		$\alpha = a + u\Delta\Delta_1$ and $\beta = b+v\Delta\Delta_2.$ Consequently,
		$$\lambda - \mu = u\Delta\Delta_1QQ^*_P + v\Delta\Delta_2PP^*_Q,$$
		and therefore, due to \eqref{Div3}:
		$$
		\begin{cases*}
			\lambda\equiv\mu\equiv a \!\!\!\!\pmod{\Delta\Delta_1} \\
			\lambda\equiv\mu\equiv b \!\!\!\!\pmod{\Delta\Delta_2}.
		\end{cases*}
		$$
		Thus, since \eqref{sys1} and \eqref{sys4} are equivalent under the condition $a\equiv b\pmod{\Delta}$, we obtain $\lambda \equiv \mu \pmod{\Delta\Delta_1\Delta_2}$. The lemma is proved.
		\end{proof}
	\begin{lemma}\label{L10}
		Let $a, b, c$, and $d$ be positive integers such that $a, b$, and $cd$ are pairwise coprime. Let $(cd, e) = 1$, then the following congruence holds:
		$$\dfrac{(acd)^*_b}{b} + \dfrac{(abe)^*_c}{c} \equiv \dfrac{1}{abcd}-\dfrac{(bcd)^*_a}{a} + \dfrac{(d-e)(abe)^*_{cd}}{cd}\!\!\!\!\pmod{1}.$$
	\end{lemma}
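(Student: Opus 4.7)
My plan is to clear denominators. Multiplying both sides of the claimed identity by $abcd$, the congruence modulo~$1$ becomes the integer congruence
\begin{equation*}
acd\cdot(acd)^*_b \,+\, abd\cdot(abe)^*_c \,+\, bcd\cdot(bcd)^*_a \,-\, ab(d-e)(abe)^*_{cd} \,\equiv\, 1 \pmod{abcd}.
\end{equation*}
Since $a$, $b$, and $cd$ are pairwise coprime by hypothesis, the Chinese remainder theorem reduces this to three separate verifications, modulo $a$, modulo $b$, and modulo $cd$.

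The verifications modulo $a$ and modulo $b$ should be immediate from the definition of a modular inverse. Modulo $a$ the three terms $acd\cdot(acd)^*_b$, $abd\cdot(abe)^*_c$ and $ab(d-e)(abe)^*_{cd}$ are all divisible by $a$, leaving only $bcd\cdot(bcd)^*_a \equiv 1 \pmod a$. By symmetry, modulo $b$ only $acd\cdot(acd)^*_b \equiv 1 \pmod b$ survives. In both cases I get $1 \equiv 1$ with no work.

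The main (and essentially the only) content of the proof will be the verification modulo $cd$. The first and third terms vanish since $cd\mid acd$ and $cd\mid bcd$, so I need to check that
\begin{equation*}
abd\cdot(abe)^*_c \,-\, ab(d-e)(abe)^*_{cd} \,\equiv\, 1 \pmod{cd}.
\end{equation*}
The key observation will be that $(abe)^*_c \equiv (abe)^*_{cd} \pmod c$, because both residues are inverses of $abe$ modulo $c$; note that this uses the hypothesis $(cd,e)=1$ (together with $(a,cd)=(b,cd)=1$) to guarantee that these inverses exist. Setting $X=(abe)^*_c$, $Y=(abe)^*_{cd}$, and writing $X-Y=c\ell$ for some integer $\ell$, I will compute
\begin{equation*}
abdX - ab(d-e)Y \,=\, abd(X-Y) + abeY \,=\, abcd\,\ell + abeY \,\equiv\, 0 + 1 \,=\, 1 \pmod{cd},
\end{equation*}
using $abe\cdot(abe)^*_{cd} \equiv 1 \pmod{cd}$ for the last step. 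This finishes the proof: beyond clearing denominators and invoking CRT, the only substantive move is recognizing that the two terms involving inverses of $abe$ combine cleanly once one reduces $(abe)^*_{cd}$ modulo $c$. No delicate estimate or deeper identity is needed, just careful bookkeeping.
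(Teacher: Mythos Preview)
Your proof is correct and follows essentially the same strategy as the paper: clear denominators and verify the resulting integer congruence via the Chinese remainder theorem modulo $a$, $b$, and $cd$. The paper organizes the argument slightly differently, splitting the identity into two auxiliary congruences --- one (obtained by multiplying by $abcd$) expressing $(acd)^*_b/b$ in terms of $1/abcd$, $(bcd)^*_a/a$, and an intermediate term $(ab)^*_{cd}/cd$, and a second (obtained by multiplying by $cd$) relating $(abe)^*_c/c - (ab)^*_{cd}/cd$ to $(d-e)(abe)^*_{cd}/cd$ --- and then adding them. Your one-pass verification avoids introducing the intermediate $(ab)^*_{cd}$ term altogether and is a bit more streamlined; the key reduction $(abe)^*_{cd}\equiv (abe)^*_c\pmod c$ that you isolate is exactly what drives the paper's second sub-congruence as well.
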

	\begin{proof}
		First, we will show that the following holds:
		\begin{equation}\label{cong_1}
			\dfrac{(acd)^*_b}{b} \equiv \dfrac{1}{abcd}-\dfrac{(bcd)^*_a}{a} - \dfrac{(ab)^*_{cd}}{cd}\!\!\!\!\pmod{1}.
		\end{equation}
		Indeed, by multiplying by $abcd$, we obtain
		$$acd(acd)^*_b \equiv 1 - bcd(bcd)^*_a-ab(ab)^*_{cd} \!\!\!\!\pmod{abcd}.$$ 
		The resulting congruence holds modulo $a, b$, and $cd$, and thus it also holds modulo $abcd$.
		
		Now we will show that the following holds:
		\begin{equation}\label{cong_2}
			\dfrac{(abe)^*_c}{c} - \dfrac{(ab)^*_{cd}}{cd} \equiv \dfrac{(d-e)(abe)^*_{cd}}{cd}\!\!\!\!\pmod{1}. 
		\end{equation}
		Indeed, by multiplying by $cd$, we obtain
		$$d(abe)^*_c - (ab)^*_{cd} \equiv (d-e)(abe)^*_{cd} \!\!\!\!\pmod{cd}.$$
		Since $(ab, cd) = 1$, the last congruence is equivalent to the following:
		$$abd(abe)^*_c - 1 \equiv (d-e)e^*_{cd} \!\!\!\!\pmod{cd}.$$
		Expanding the brackets, adding one to both sides of the congruence, and dividing by $d$, we get 
		$$ab(abe)^*_{c}\equiv e^*_{cd}\!\!\!\! \pmod{c}.$$
		Taking into account that
		$$e^*_{cd}\equiv e^*_{c}\!\!\!\!\pmod{c},$$
		the congruence \eqref{cong_2} is proved.
		
		By applying \eqref{cong_1} and then \eqref{cong_2}, we obtain the required result.
	\end{proof}
	
	\begin{lemma}\label{L10.5}
		Let 
		$$F(x_1, x_2, x_3) = g\left(\dfrac{ax_1}{x_2 x_3}\right)\!,$$
		where $a$ is some constant and $g$ is a thrice continuously differentiable function. Then the following equalities hold:
		$$\dfrac{\partial F}{\partial x_i} = (-1)^{\mathbb{I}_{i=1}+1}F_1\left(\dfrac{ax_1}{x_2x_3}\right)\dfrac{1}{x_i},\ \ (1\leqslant i \leqslant 3),$$
		$$\dfrac{\partial^2 F}{\partial x_i\partial x_j} = (-1)^{\mathbb{I}_{i=1}}F_2\left(\dfrac{ax_1}{x_2x_3}\right)\dfrac{1}{x_ix_j},\ \ (1\leqslant i<j \leqslant 3)$$
		and
		$$\dfrac{\partial^3 F}{\partial x_1\partial x_2\partial x_3} = F_3\left(\dfrac{ax_1}{x_2x_3}\right)\dfrac{1}{x_1x_2x_3},$$
		where $F_1(z) = zg'(z)$, $F_2(z) = zg'(z) + z^2g''(z)$, and 
		$$F_3(z) = zg'(z) + 3z^2 g''(z) + z^3g'''(z).$$
	\end{lemma}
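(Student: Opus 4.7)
The plan is to reduce everything to the chain rule in a single variable $z = ax_1/(x_2x_3)$, so that each partial derivative of $F(x_1,x_2,x_3)=g(z)$ becomes a derivative of $g$ evaluated at $z$ times a rational factor in the $x_i$. I would first compute $\partial z/\partial x_i$ and observe that
$$\frac{\partial z}{\partial x_1} = \frac{z}{x_1},\qquad \frac{\partial z}{\partial x_2} = -\frac{z}{x_2},\qquad \frac{\partial z}{\partial x_3} = -\frac{z}{x_3},$$
which is exactly the sign pattern $(-1)^{\mathbb{I}_{i=1}+1}\,z/x_i$. The first-order formula then drops out of $\partial F/\partial x_i = g'(z)\cdot\partial z/\partial x_i$, after using the defining identity $F_1(z)=zg'(z)$.

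For the second derivatives, I would differentiate the first-order expression $\partial F/\partial x_i = (-1)^{\mathbb{I}_{i=1}+1} F_1(z)/x_i$ with respect to $x_j$ (with $j\ne i$, so the $1/x_i$ factor is treated as a constant). The chain rule gives a factor $F_1'(z)\,\partial z/\partial x_j$; combining the sign from $\partial z/\partial x_j$ with the sign already present, one sees that the overall sign becomes $(-1)^{\mathbb{I}_{i=1}+1+\mathbb{I}_{j=1}+1} = (-1)^{\mathbb{I}_{i=1}}$ for each ordered pair $i<j$ (since $j\ne 1$ in that case, but one still gets the right sign by the same bookkeeping when $(i,j)=(2,3)$). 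Then one uses the identity $zF_1'(z) = zg'(z)+z^2g''(z) = F_2(z)$ to rewrite the result in the stated form.

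For the third derivative, I would start from $\partial^2 F/\partial x_1\partial x_2 = -F_2(z)/(x_1x_2)$ and differentiate with respect to $x_3$. Again the $1/(x_1x_2)$ factor is a constant, and the chain rule produces $F_2'(z)\,\partial z/\partial x_3 = -F_2'(z)\,z/x_3$; the two minus signs cancel, and the key algebraic identity is
$$zF_2'(z)=z\bigl(g'(z)+zg''(z)+2zg''(z)+z^2g'''(z)\bigr)=zg'(z)+3z^2g''(z)+z^3g'''(z)=F_3(z),$$
yielding the claimed formula.

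The whole argument is purely a chain-rule and product-rule computation; there is no real obstacle. The only thing worth stating carefully is the sign bookkeeping in the first two formulas, and verifying the polynomial identities $zF_1'(z)=F_2(z)$ and $zF_2'(z)=F_3(z)$, both of which are immediate from the definitions of $F_1,F_2,F_3$.
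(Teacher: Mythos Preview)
Your argument is correct: the whole lemma is a straightforward chain-rule computation, and the key identities $zF_1'(z)=F_2(z)$ and $zF_2'(z)=F_3(z)$ that you isolate are exactly what make the induction from one order to the next work. The paper in fact states this lemma without proof, treating it as an elementary calculus fact, so your write-up already goes further than the paper does.
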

	%\begin{proof}
	%	Непосредственная проверка.
	%	\end{proof}
The following lemma is a multidimensional analogue of partial summation.
\begin{lemma}\label{L11} 
	Let the function $f(\vec{x}) = f(x_1, x_2, \ldots, x_m)$ be continuously differentiable with respect to each variable, and let $c(\vec{\ell}) = c(\ell_1, \ell_2, \ldots, \ell_m)$ be some multidimensional sequence. Then for the sum 
	$$S_m = \sum\limits_{\substack{K_i<\ell_i\leqslant L_i \\ 1\leqslant i\leqslant m}}c(\vec{\ell})f(\vec{\ell})$$
	the following equality holds:
	\begin{multline*}
		S_m = C(\vec{L})f(\vec{L})\\
		+\sum_{s=1}^m(-1)^s \!\!\!\!\!\sum_{1\leqslant i_1<\cdots<i_s\leqslant m}\int_{K_{i_1}}^{L_{i_1}}\cdots \int_{K_{i_s}}^{L_{i_s}}C(\vec{b}_{i_1, \ldots, i_s})\dfrac{\partial^s{f}(\vec{b}_{i_1, \ldots, i_s})}{\partial{x_{i_1}}\cdots \partial{x_{i_s}}}dx_{i_1}\cdots dx_{i_s},
	\end{multline*}
	where 
	$$\vec{L} = (L_1, \ldots, L_m),\ \ C(\vec{x}) = \sum\limits_{\substack{K_i<\ell_i\leqslant x_i \\ 1\leqslant i\leqslant m}}c(\vec{\ell}),$$
	and 
	$\vec{b}_{i_1, \ldots, i_s} = (b_1, \ldots, b_m)$, where
	$$b_k = 
	\begin{cases}
		x_k, & \text{if}\ k\in\left\{i_1, \ldots, i_s\right\},\\
		L_k, & \text{if}\ k\not\in\left\{i_1, \ldots, i_s\right\}.
	\end{cases}
	$$
\end{lemma}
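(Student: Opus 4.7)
The plan is to reduce Lemma \ref{L11} to a single multivariate pointwise identity expressing $f(\vec\ell)$ in terms of $f(\vec L)$ and the mixed partial derivatives of $f$ integrated over the relevant boxes, and then substitute that identity into $S_m$ and interchange the finite sum over $\vec\ell$ with the iterated integrals.

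The identity I would first establish is
\begin{equation*}
f(\vec\ell) \;=\; f(\vec L) \;+\; \sum_{s=1}^{m}(-1)^s\!\!\!\!\!\sum_{1\leqslant i_1<\cdots<i_s\leqslant m}\int_{\ell_{i_1}}^{L_{i_1}}\!\!\!\cdots\!\int_{\ell_{i_s}}^{L_{i_s}}\frac{\partial^s f(\vec b_{i_1,\ldots,i_s})}{\partial x_{i_1}\cdots\partial x_{i_s}}\,dx_{i_1}\cdots dx_{i_s},
\end{equation*}
valid for every $\vec\ell$ with $K_i<\ell_i\leqslant L_i$. One proves it by induction on $m$: the base case $m=1$ is just the fundamental theorem of calculus $f(\ell_1)=f(L_1)-\int_{\ell_1}^{L_1}f'(x_1)\,dx_1$, and the inductive step applies the same one-variable identity in the $(m+1)$-th coordinate both to $f(\vec\ell,\ell_{m+1})$ and to each integrand $\partial^s f/\partial x_{i_1}\cdots\partial x_{i_s}$ produced at the previous stage, viewed as a function of its last argument. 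Expanding the resulting products generates all $2^{m+1}$ subsets of $\{1,\ldots,m+1\}$ with their correct signs.

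Once the pointwise identity is in hand, multiplying by $c(\vec\ell)$ and summing over $\vec\ell$ gives the boundary term $C(\vec L)f(\vec L)$ immediately, while for each non-empty subset $S=\{i_1<\cdots<i_s\}$ one must evaluate
\[ (-1)^s\sum_{\vec\ell}c(\vec\ell)\int_{\ell_{i_1}}^{L_{i_1}}\!\!\!\cdots\!\int_{\ell_{i_s}}^{L_{i_s}}\frac{\partial^s f(\vec b_S)}{\partial x_{i_1}\cdots\partial x_{i_s}}\,dx_{i_1}\cdots dx_{i_s}. \]
Since $\vec b_S$ contains $L_i$ in each coordinate $i\notin S$ and $x_i$ in each coordinate $i\in S$, it does not depend on $\vec\ell$; the only $\vec\ell$-dependence therefore sits in $c(\vec\ell)$ and in the lower integration limits. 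Rewriting $\int_{\ell_i}^{L_i}$ as $\int_{K_i}^{L_i}\I(\ell_i\leqslant x_i)\,dx_i$ for each $i\in S$ and swapping the (finite) sum with the integrals, which is legitimate by Fubini, the inner sum over $\vec\ell$ collapses to $\sum c(\vec\ell)$ taken over $K_i<\ell_i\leqslant x_i$ for $i\in S$ and $K_i<\ell_i\leqslant L_i$ for $i\notin S$, which is exactly $C(\vec b_S)$. Collecting the terms with the sign $(-1)^s$ yields the formula stated in the lemma.

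The main obstacle is purely notational bookkeeping in the inductive derivation of the pointwise identity: at each stage the integrands depend on a mixture of real integration variables $x_i$ and integer parameters $\ell_j$, and one must consistently freeze the coordinates $i\notin S$ at their upper endpoints $L_i$ so that $\vec b_S$ is defined exactly as in the lemma. No analytic difficulty is anticipated, since all sums are finite, the iterated integrals are ordinary Riemann integrals over bounded rectangles, and the stated differentiability of $f$ is sufficient to justify the successive applications of the fundamental theorem of calculus and of Fubini's theorem.
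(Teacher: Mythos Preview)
Your argument is correct. The route differs from the paper's: the paper proves Lemma~\ref{L11} by induction on $m$ applied directly to the sum $S_m$, writing $S_{q+1}$ as an outer sum over $\ell_{q+1}$ of the $q$-dimensional inner sum, applying the induction hypothesis to that inner sum, and then performing one-dimensional Abel partial summation in $\ell_{q+1}$ on each of the resulting pieces before recombining. You instead separate the analytic content from the arithmetic content: first establish the pointwise identity for $f(\vec\ell)$ (itself by induction on $m$ via the fundamental theorem of calculus), then multiply by $c(\vec\ell)$, sum, and use the indicator rewriting $\int_{\ell_i}^{L_i}=\int_{K_i}^{L_i}\I(\ell_i\leqslant x_i)\,dx_i$ together with Fubini to collapse the $\vec\ell$-sum into $C(\vec b_S)$. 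Your approach is arguably cleaner, since the bookkeeping over subsets $S\subseteq\{1,\ldots,m\}$ is done once in the pointwise identity and the passage from $f$ to $S_m$ becomes a single transparent step; the paper's approach has the minor advantage of invoking only the classical one-variable partial summation formula and avoiding the explicit pointwise identity, at the cost of more term-splitting in the inductive step.
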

\begin{proof}
	We will prove the statement by induction on the size $m$. For $m=1$, the statement of the lemma is true by partial summation. Assume the statement is true for $m=q\geqslant 1$, and we will prove it for $m = q+1$. We have
	$$S_{q+1} = \sum_{K_{q+1}<\ell_{q+1}\leqslant L_{q+1}}\sum\limits_{\substack{K_i<\ell_i\leqslant L_i \\ 1\leqslant i\leqslant q}}c(\vec{\ell}_q, \ell_{q+1})f(\vec{\ell}_q, \ell_{q+1}),$$
	where $\vec{\ell}_q = (\ell_1, \ldots, \ell_q)$.
	Applying the induction hypothesis to the inner sum and expanding the brackets, we obtain:
	\begin{multline*}
		S_{q+1} = \sum_{K_{q+1}<\ell_{q+1}\leqslant L_{q+1}}C(\vec{L}_q, \ell_{q+1})f(\vec{L}_q, \ell_{q+1})+\!\!
		\sum_{K_{q+1}<\ell_{q+1}\leqslant L_{q+1}}\sum_{s=1}^q(-1)^s\\
		\times\!\!\sum_{1\leqslant i_1<\cdots<i_s\leqslant q}\int_{K_{i_1}}^{L_{i_1}}\cdots\int_{K_{i_s}}^{L_{i_s}}\!\! C(\vec{a}_{i_1, \ldots, i_s}, \ell_{q+1})\dfrac{\partial^s{f}(\vec{a}_{i_1, \ldots, i_s}, \ell_{q+1})}{\partial{x_{i_1}}\cdots \partial{x_{i_s}}}dx_{i_1}\cdots dx_{i_s}\\
		=S_{q+1}^{(1)} + S_{q+1}^{(2)}.
	\end{multline*}
	Here $\vec{L}_q = (L_1, \ldots, L_q)$ and $\vec{a}_{i_1, \ldots, i_s} = (a_1, \ldots, a_q),$ where
	$$a_k = 
	\begin{cases}
		x_k, &  \text{if}\ k\in\left\{i_1, \ldots, i_s\right\}\!,\\
		L_k, &  \text{if}\ k\not\in\left\{i_1, \ldots, i_s\right\}\!.
	\end{cases}
	$$
	 Using the partial summation, for $S_{q+1}^{(1)}$ we get
	\begin{multline}\label{S_{q+1}^{(1)}}
		S_{q+1}^{(1)} = C(\vec{L}_q, L_{q+1})f(\vec{L}_q, L_{q+1})-\int_{K_{q+1}}^{L_{q+1}}C(\vec{L}_q,x_{q+1})\dfrac{\partial{f}(\vec{L}_q,x_{q+1})}{\partial{x_{q+1}}}dx_{q+1}\\
		= C(\vec{L})f(\vec{L}) - \int_{K_{q+1}}^{L_{q+1}}C(\vec{b}_{q+1})\dfrac{\partial{f}(\vec{b}_{q+1})}{\partial{x_{q+1}}}dx_{q+1},
	\end{multline}
	where the definition of $\vec{L}$ and the tuple $\vec{b}$ with indices is given in the statement.\\
	For the sum $ S_{q+1}^{(2)} $, we have
	$$S_{q+1}^{(2)} = \sum_{s=1}^q(-1)^s\sum_{1\leqslant i_1<\cdots<i_s\leqslant q}\int_{K_{i_1}}^{L_{i_1}}\cdots\int_{K_{i_s}}^{L_{i_s}}S_{i_1, \ldots, i_s}dx_{i_1}\cdots dx_{i_s},$$
	where
	$$S_{i_1, \ldots, i_s} = \sum_{K_{q+1}<\ell_{q+1}\leqslant L_{q+1}}C(\vec{a}_{i_1, \ldots, i_s}, \ell_{q+1})\dfrac{\partial^s{f}(\vec{a}_{i_1, \ldots, i_s}, \ell_{q+1})}{\partial{x_{i_1}}\cdots \partial{x_{i_s}}}.$$
Using partial summation for the sum $ S_{i_1, \ldots, i_s} $, we obtain:
	\begin{multline*}
		S_{i_1, \ldots, i_s} = C(\vec{a}_{i_1, \ldots, i_s}, L_{q+1})\dfrac{\partial^s{f}(\vec{a}_{i_1, \ldots, i_s}, L_{q+1})}{\partial{x_{i_1}}\cdots \partial{x_{i_s}}}\\
		-\int_{K_{q+1}}^{L_{q+1}}C(\vec{a}_{i_1, \ldots, i_s}, x_{q+1})\dfrac{\partial^{s+1}{f}(\vec{a}_{i_1, \ldots, i_s}, x_{q+1})}{\partial{x_{i_1}}\cdots \partial{x_{i_s}}\partial{x_{q+1}}}dx_{q+1}\\
		= C(\vec{b}_{i_1, \ldots, i_s})\dfrac{\partial^s{f}(\vec{b}_{i_1, \ldots, i_s})}{\partial{x_{i_1}}\cdots \partial{x_{i_s}}}\\
		-\int_{K_{q+1}}^{L_{q+1}}C(\vec{b}_{i_1, \ldots, i_s, q+1})\dfrac{\partial^{s+1}{f}(\vec{b}_{i_1, \ldots, i_s, q+1})}{\partial{x_{i_1}}\cdots \partial{x_{i_s}}\partial{x_{q+1}}}dx_{q+1}.
	\end{multline*}
	From this, we obtain that $S_{q+1}^{(2)} = S_{q+1}^{(3)} + S_{q+1}^{(4)}$, where
	\begin{multline*}
		S_{q+1}^{(3)} = \sum_{s=1}^q(-1)^s\sum_{1\leqslant i_1<\cdots<i_s\leqslant q}\int_{K_{i_1}}^{L_{i_1}}\cdots\int_{K_{i_s}}^{L_{i_s}}C(\vec{b}_{i_1, \ldots, i_s})\\
		\times\dfrac{\partial^s{f}(\vec{b}_{i_1, \ldots, i_s})}{\partial{x_{i_1}}\cdots \partial{x_{i_s}}}dx_{i_1}\cdots dx_{i_s},
	\end{multline*}
	\begin{multline*}
		S_{q+1}^{(4)} =  \sum_{s=1}^q(-1)^{s+1}\sum_{1\leqslant i_1<\cdots<i_s\leqslant q}\int_{K_{i_1}}^{L_{i_1}}\cdots\int_{K_{i_s}}^{L_{i_s}}\int_{K_{q+1}}^{L_{q+1}}C(\vec{b}_{i_1, \ldots, i_s, q+1})\\
		\times\dfrac{\partial^{s+1}{f}(\vec{b}_{i_1, \ldots, i_s, q+1})}{\partial{x_{i_1}}\cdots \partial{x_{i_s}}\partial{x_{q+1}}}dx_{i_1}\cdots dx_{i_s}dx_{q+1}.
	\end{multline*}
	By isolating the terms with $ s = 1 $ in $ S_{q+1}^{(3)} $ and making the substitution $ s = t - 1 $ in the sum $ S_{q+1}^{(4)} $, we obtain
	\begin{multline*}
		S_{q+1}  =  C(\vec{L})f(\vec{L}) - \sum_{1\leqslant i\leqslant q+1}\int_{K_{i}}^{L_{i}}C(\vec{b}_{i})\dfrac{\partial{f}(\vec{b}_{i})}{\partial{x_{i}}}dx_{i}\\
		+\sum_{s=2}^q(-1)^s\sum_{1\leqslant i_1<\cdots<i_s\leqslant q}\int_{K_{i_1}}^{L_{i_1}}\cdots\int_{K_{i_s}}^{L_{i_s}}C(\vec{b}_{i_1, \ldots, i_s})\dfrac{\partial^s{f}(\vec{b}_{i_1, \ldots, i_s})}{\partial{x_{i_1}}\cdots \partial{x_{i_s}}}dx_{i_1}\cdots dx_{i_s}\\
		+ \sum_{t = 2}^{q+1}(-1)^{t}\sum_{1\leqslant i_1<\cdots<i_{t-1}\leqslant q}\int_{K_{i_1}}^{L_{i_1}}\cdots\int_{K_{i_{t-1}}}^{L_{i_{t-1}}}\int_{K_{q+1}}^{L_{q+1}}C(\vec{b}_{i_1, \ldots, i_{t-1}, q+1})\\
		\times\dfrac{\partial^{t}{f}(\vec{b}_{i_1, \ldots, i_{t-1}, q+1})}{\partial{x_{i_1}}\cdots \partial{x_{i_{t-1}}}\partial{x_{q+1}}}dx_{i_1}\cdots dx_{i_{t-1}}dx_{q+1}.
	\end{multline*}
	By combining the last two sums into one, we obtain
	\begin{multline*}
		S_{q+1} = C(\vec{L})f(\vec{L}) - \sum_{1\leqslant i\leqslant q+1}\int_{K_{i}}^{L_{i}}C(\vec{b}_{i})\dfrac{\partial{f}(\vec{b}_{i})}{\partial{x_{i}}}dx_{i}\\
		+\sum_{s=2}^{q+1}(-1)^s\sum_{1\leqslant i_1<\cdots<i_s\leqslant q+1}\int_{K_{i_1}}^{L_{i_1}}\cdots\int_{K_{i_s}}^{L_{i_s}}C(\vec{b}_{i_1, \ldots, i_s})\\
		\times\dfrac{\partial^s{f}(\vec{b}_{i_1, \ldots, i_s})}{\partial{x_{i_1}}\cdots \partial{x_{i_s}}}dx_{i_1}\cdots dx_{i_s}.
	\end{multline*}
	Therefore, by induction, the lemma follows.
\end{proof}
We will need the following important theorem by Bettin and Chandee \cite[Theorem 1]{Bettin Chandee}, which provides an estimate for the trilinear Kloosterman sum.
\begin{theorem}\label{BettinChandee}
For the numbers $ M, N, $ and $ A $, we define
	$$\mathcal{B}(M, N, A) = \sum\limits_{\substack{a \in\mathcal{A}}}\sum_{m \in\mathcal{M}}\sum\limits_{\substack{n \in\mathcal{N} \\ (n, m) = 1}}\alpha_m\beta_n\nu_a \exp\left(2\pi i\cdot\vartheta \dfrac{a m^*_n}{n} \right)\!,$$
	where $ \nu_a, \alpha_m, $ and $ \beta_n $ are arbitrary sequences of complex numbers with supports $$\mathcal{A} = [A/2, A],\ \mathcal{M} = [M/2, M],\  \mathcal{N} = [N/2, N]$$ respectively, and $\vartheta$ is non-zero integer. Then we have
	\begin{multline*}
		\mathcal{B}(M, N, A)\ll_\varepsilon \left\|\alpha\right\|_2\left\|\beta\right\|_2\left\|\nu\right\|_2\left(1+ \dfrac{|\vartheta| A}{M N}\right)^{\frac{1}{2}}\\
		\times \left( \left(AMN\right)^{\frac{7}{20}+\varepsilon}(M+N)^{\frac{1}{4}} + \left(AMN\right)^{\frac{3}{8}+\varepsilon}(AM+AN)^{\frac{1}{8}}\right)\!,
	\end{multline*}
	where $ \left\|\cdot\right\|_2 $ denotes the $ \ell_2 $-norm of a sequence, and $ \varepsilon > 0 $ is an arbitrary fixed number.
\end{theorem}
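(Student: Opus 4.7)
The plan is to obtain power-saving cancellation in the trilinear Kloosterman sum $\mathcal{B}(M,N,A)$ by iterated Cauchy--Schwarz together with deep estimates for sums of Kloosterman sums of Deshouillers--Iwaniec type.

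First, I would view $n$ as the modulus variable and $(a,m)$ as the outer variables. After inserting a smooth bump $w$ supported on $[N/2,N]$, Cauchy--Schwarz in $n$ allows one to pull out the factor $\|\beta\|_2$, reducing matters to estimating
$$\sum_{a_1,a_2,m_1,m_2}\nu_{a_1}\overline{\nu_{a_2}}\alpha_{m_1}\overline{\alpha_{m_2}}\sum_{n}w\!\left(\frac{n}{N}\right)\mathbb{I}_{(n,m_1 m_2)=1}\exp\!\left(2\pi i\,\vartheta\,\frac{a_1\overline{m}_1 - a_2\overline{m}_2}{n}\right)\!,$$
where $\overline{m}_j$ denotes the inverse of $m_j$ modulo $n$. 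The diagonal contribution, corresponding to $a_1\overline{m}_1\equiv a_2\overline{m}_2\pmod n$, is controlled directly in terms of $\|\alpha\|_2^2\|\nu\|_2^2\|\beta\|_2^2$ and accounts for the coincidences among $(a_1,m_1)$ and $(a_2,m_2)$.

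Second, on the off-diagonal I would apply Poisson summation in $n$ (the completion step in the modulus). This replaces the $n$-sum by a dual sum over frequencies whose effective length is $\sim |\vartheta|A/M$; the factor $(1+|\vartheta|A/(MN))^{1/2}$ in the final bound is precisely the penalty from this completion. Each dual frequency produces a classical Kloosterman sum in the modulus $n$, and one further Cauchy--Schwarz, followed by the Deshouillers--Iwaniec large sieve for sums $\sum_c S(r,s;c)\phi(c)$ — built on the Kuznetsov trace formula — delivers cancellation substantially beyond what Weil's individual bound alone provides.

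Third, balancing the diagonal loss against the spectral estimate across two different partitions of the variables produces the two competing terms: $(AMN)^{7/20+\varepsilon}(M+N)^{1/4}$ from one grouping and $(AMN)^{3/8+\varepsilon}(AM+AN)^{1/8}$ from the other, with the latter dominating when the ``depth'' $AMN$ is small relative to $AM+AN$. The main obstacle is to preserve the coprimality $(m,n)=1$ and the smooth envelope through Poisson inversion — handled by detecting the coprimality via M\"obius and absorbing the resulting divisor sum — and, more crucially, to arrange the variables after completion into a configuration in which the Kuznetsov formula delivers its full strength. A direct Cauchy--Weil argument yields only a markedly weaker exponent; it is the automorphic input that makes the sharp exponent $7/20$ accessible.
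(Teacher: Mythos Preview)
The paper does not prove this theorem at all: it is quoted verbatim as Theorem~1 of Bettin--Chandee, \emph{Trilinear forms with Kloosterman fractions}, Adv.\ Math.\ \textbf{328} (2018), and used as a black box in \S10. So there is nothing in the paper to compare your argument against.

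As for your sketch itself: the broad architecture --- Cauchy--Schwarz to eliminate one set of coefficients, then automorphic/spectral input of Kuznetsov type to beat Weil --- is indeed the engine behind the Bettin--Chandee bound, which in turn rests on the bilinear Kloosterman-fraction estimates of Duke--Friedlander--Iwaniec. However, two of your steps are not quite how the actual proof runs. First, the transformation of the inner sum is not ``Poisson summation in $n$'' in the usual completion sense; the crucial move is the additive reciprocity
\[
\frac{\overline{m}}{n}+\frac{\overline{n}}{m}\equiv\frac{1}{mn}\pmod 1,
\]
which swaps the roles of $m$ and $n$ and is what ultimately allows one to bring the $a$-variable into a position where the DFI spectral estimate applies. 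Second, the two displayed terms with exponents $7/20$ and $3/8$ do not arise from ``two different partitions of the variables'' chosen at will; they come from a single argument in which one interpolates between the trivial bound and the DFI bilinear bound, and the second term is the contribution of an auxiliary range that the first does not cover. Your outline is a reasonable heuristic for why such a bound should exist, but it does not yet pin down the mechanism that produces these specific exponents, and a referee would not accept it as a proof. If you want to include an argument rather than a citation, you would need to reproduce the reciprocity-plus-DFI chain with the explicit balancing; otherwise, citing Bettin--Chandee as the paper does is the standard practice.
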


%Это есть теорема $1$ из \cite{Bettin Chandee}.

\section{Transformation of the sum $ Q(x) $}
Let us consider an integer number $ A \geqslant 2 $. For brevity, we will write $ \mathcal{L} = \ln x $. Next, we define $ h(n) = \frac{r(n)}{4} $; then, using the equality $ h(n) = \sum_{d|n} \chi_4(d) $, we obtain
\begin{multline} Q(x) = \mathop{{\sum_{n\leqslant x}}'} \dfrac{h(n)}{h(n+1)} \\= \mathop{{\sum_{n\leqslant x}}'}\dfrac{1}{h(n+1)}\left( \sum\limits_{\substack{d|n \\ d\leqslant \sqrt{x}\mathcal{L}^{-A}}}\chi_4(d) + \sum\limits_{\substack{d|n \\ \sqrt{x}\mathcal{L}^{-A}<d\leqslant \sqrt{x}\mathcal{L}^{A}}}\chi_4(d) + \sum\limits_{\substack{d|n \\  d>\sqrt{x}\mathcal{L}^{A}}}\chi_4(d) \right)\\
	=Q_1+Q_2+Q_3.
\end{multline}
We transform each of the sums $ Q_i $. For the sum $ Q_1 $, we have
\begin{equation*}
	Q_1 = \sum_{d\leqslant \sqrt{x}\mathcal{L}^{-A}}\chi_4(d){\sum\limits_{\substack{n \leqslant x \\ n\equiv 1\,(\text{mod}\, d)}}}'\dfrac{1}{h(n)} + O(\sqrt{x}) .
\end{equation*}
Applying the orthogonality of characters, we obtain
\begin{multline}\label{Q^{MT_1}}
	Q_1 = \sum_{d\leqslant \sqrt{x}\mathcal{L}^{-A}}\dfrac{\chi_4(d)}{\varphi(d)}\sum_{\chi\,\text{mod}\, d}\,{\sum_{n\leqslant x}}'\ \dfrac{\chi(n)}{h(n)} + O(\sqrt{x})\\
	= \sum_{d\leqslant \sqrt{x}\mathcal{L}^{-A}}\dfrac{\chi_4(d)}{\varphi(d)}{\sum\limits_{\substack{n \leqslant x \\ (n, d) = 1}}}'\dfrac{1}{h(n)} + O\left( \sum_{d\leqslant \sqrt{x}\mathcal{L}^{-A}}\dfrac{1}{\varphi(d)}\sum\limits_{\substack{\chi\,\text{mod}\, d \\ \chi \neq \chi_0}}\left|{\sum_{n\leqslant x}}'\,\dfrac{\chi(n)}{h(n)} \right| \right) + O(\sqrt{x})\\
	= Q^{MT_1} + R_1 + O(\sqrt{x}),
\end{multline}
where the meanings of the notations $ Q^{MT_1} $ and $ R_1 $ are evident, and $ \chi_0 $ denotes the principal character modulo $ d $.

Now we transform the sum $ Q_2 $. We have
\begin{multline*}
	Q_2 =  \sum_{\sqrt{x}\mathcal{L}^{-A} < d \leqslant \sqrt{x}\mathcal{L}^{A}}{\chi_4(d)}{\sum\limits_{\substack{n \leqslant x \\ n\equiv 1\,(\text{mod}\, d)}}}'\dfrac{1}{h(n)} + O\left(\sqrt{x}\mathcal{L}^A \right)\\
	= Q^{MT_2} + Q^{Err_2} + O(\sqrt{x}\mathcal{L}^A),
\end{multline*}
where
\begin{equation}\label{Q^{MT_2}}
	Q^{MT_2} = \sum_{\sqrt{x}\mathcal{L}^{-A} < d \leqslant \sqrt{x}\mathcal{L}^{A}}\dfrac{\chi_4(d)}{\varphi(d)}{\sum\limits_{\substack{n \leqslant x \\ (n, d) = 1}}}'\dfrac{1}{h(n)}
\end{equation}
and
\begin{equation}\label{qerr2} Q^{Err_2} = \sum_{\sqrt{x}\mathcal{L}^{-A} < d \leqslant \sqrt{x}\mathcal{L}^{A}}{\chi_4(d)}\left( {\sum\limits_{\substack{n \leqslant x \\ n\equiv 1\,(\text{mod}\, d)}}}'\dfrac{1}{h(n)} - \dfrac{1}{\varphi(d)}{\sum\limits_{\substack{n \leqslant x \\ (n, d) = 1}}}'\dfrac{1}{h(n)} \right)\!.
\end{equation}

Finally, let us move on to the sum $ Q_3 $. Let $ y = \sqrt{x} \mathcal{L}^A $; then
\begin{multline*}
	Q_3 = %\mathop{{\sum_{n\leqslant x}}'}\dfrac{1}{h(n+1)}\sum\limits_{\substack{d|n \\  d>\sqrt{x}\mathcal{L}^{A}}}\chi_4(d) =
	\mathop{{\sum_{n\leqslant x}}'}\dfrac{1}{h(n+1)}\sum\limits_{\substack{d|n \\  d<{n}/{\sqrt{x}\mathcal{L}^{A}}}}\chi_4\left( \dfrac{n}{d}\right)\\ =\sum_{d<\sqrt{x}\mathcal{L}^{-A}}\mathop{{\sum_{y<k\leqslant x/d}}'}\dfrac{\chi_4(k)}{h(kd+1)}
	=Q_{3}^{(1)}-Q_{3}^{(2)},
\end{multline*}
where $$
Q_{3}^{(1)} = \sum_{d<\sqrt{x}\mathcal{L}^{-A}}\mathop{{\sum_{k\leqslant x/d}}'}\dfrac{\chi_4(k)}{h(kd+1)}
%Q_{3, 1}(x) =\sum_{d<\sqrt{x}\mathcal{L}^{-A}}\left( {\sum\limits_{\substack{k\leqslant x/d \\ k \equiv\,1 (\text{mod}\, 4) }}}'\dfrac{1}{h(kd+1)} - {\sum\limits_{\substack{\sqrt{x}\mathcal{L}^A<k\leqslant x/d \\ k \equiv\,3 (\text{mod}\, 4) }}}'\dfrac{1}{h(kd+1)}\right).
$$
and
$$Q_{3}^{(2)} = \sum_{d<\sqrt{x}\mathcal{L}^{-A}}\mathop{{\sum_{k\leqslant y}}'}\dfrac{\chi_4(k)}{h(kd+1)}.
$$
First, we transform the sum $ Q_{3}^{(1)} $. We have
$$Q_{3}^{(1)} = \sum_{d<\sqrt{x}\mathcal{L}^{-A}}\left( {\sum\limits_{\substack{k\leqslant x/d \\ k \equiv\,1 (\text{mod}\, 4) }}}'\dfrac{1}{h(kd+1)} - {\sum\limits_{\substack{k\leqslant x/d \\ k \equiv\,3 (\text{mod}\, 4) }}}'\dfrac{1}{h(kd+1)}\right)\!.$$
By introducing the notation $ n = kd + 1 $ in the inner sums, we obtain
\begin{equation}\label{Q3}
	Q_{3}^{(1)} = \sum_{d<\sqrt{x}\mathcal{L}^{-A}}\left( {\sum\limits_{\substack{n\leqslant x \\ n \equiv\,1+d\, (\text{mod}\, 4d) }}}'\dfrac{1}{h(n)} - {\sum\limits_{\substack{n\leqslant x \\ n \equiv\,1+3d\, (\text{mod}\, 4d) }}}'\dfrac{1}{h(n)}\right) + O\left(\dfrac{\sqrt{x}}{\mathcal{L}^{A}}\right)\!.
\end{equation}
%В дальнейшем, ко внутренним суммам по $n$ мы применим соотношение ортогональности, как это делали выше для суммы $Q_1(x)$.

Set $\Delta_1 = (1+d, 4d)$ and $\Delta_2 = (1+3d, 4d)$. Since $$\Delta_1| 4(1+d)-4d = 4,\ \ \Delta_2 | 4(1+3d)-12d = 4,$$ it follows that $\Delta_1$ and $\Delta_2$ take one of the values: $1, 2$ or $ 4.$ Now, we note that if $ d $ is even, then $ 1 + d $ and $ 1 + 3d $ are odd, and therefore $ \Delta_1 = \Delta_2 = 1 $. On the other hand, if $ d \equiv 1 \, (\text{mod} \, 4) $, then $ (d + 1)/2 $ is odd. Hence $\Delta_1 = 2\left( (d+1)/2, 2d\right) = 2$ and
$$\Delta_2 %= (1+d+2d, 4d) 
= 2((d+1)/2+d, 2d) = 4(((d+1)/2+d)/2, d) = 4.$$
Similarly, for $ d \equiv 3 \, (\text{mod} \, 4) $, we obtain $ \Delta_1 = 4 $ and $ \Delta_2 = 2 $.
Therefore, by splitting the sum \eqref{Q3} according to the remainder of $ d $ when divided by $ 4 $ and using the fact that $ h(k) = h(2k) $, we obtain
\begin{multline*}
	Q_{3}^{(1)} = \sum\limits_{\substack{d<\sqrt{x}\mathcal{L}^{-A}\\
			d\text{ even}}}\left( {\sum\limits_{\substack{n\leqslant x \\ n \equiv\,1+d\, (\text{mod}\, 4d) }}}'\dfrac{1}{h(n)} - {\sum\limits_{\substack{n\leqslant x \\ n \equiv\,1+3d\, (\text{mod}\, 4d) }}}'\dfrac{1}{h(n)}\right) \\
	+ \sum\limits_{\substack{d<\sqrt{x}\mathcal{L}^{-A}\\
			d \equiv\,1\,(\text{mod}\,4)}}\left( {\sum\limits_{\substack{k\leqslant x/2 \\ k \equiv\,(1+d)/2\, (\text{mod}\, 2d) }}}'\dfrac{1}{h(k)} - {\sum\limits_{\substack{k\leqslant x/4 \\ k \equiv\,(1+3d)/4\, (\text{mod}\, d) }}}'\dfrac{1}{h(k)}\right)\\
	+\sum\limits_{\substack{d<\sqrt{x}\mathcal{L}^{-A}\\
			d \equiv\,3\,(\text{mod}\,4)}}\left( {\sum\limits_{\substack{k\leqslant x/4 \\ k \equiv\,(1+d)/4\, (\text{mod}\, d) }}}'\dfrac{1}{h(k)} - {\sum\limits_{\substack{k\leqslant x/2 \\ k \equiv\,(1+3d)/2\, (\text{mod}\, 2d) }}}'\dfrac{1}{h(k)}\right)
	+ O\left(\dfrac{\sqrt{x}}{\mathcal{L}^{A}}\right)\!.
\end{multline*}
As before, we apply the orthogonality of characters and denote the contribution from the principal character by $ Q^{MT_{3, 1}} $, while the contribution from the remaining characters is denoted by $ R_3^{(1)} $. Then, for $ Q^{MT_{3, 1}} $, we have
\begin{multline}\label{Q^{MT_{3, 1}}}
	Q^{MT_{3, 1}} = \sum\limits_{\substack{d<\sqrt{x}\mathcal{L}^{-A}\\
			d \equiv\,1\,(\text{mod}\,4)}}\dfrac{1}{\varphi(2d)}{\sum\limits_{\substack{k\leqslant x/2 \\ (k, 2d) =1 }}}'\dfrac{1}{h(k)} 
	- \sum\limits_{\substack{d<\sqrt{x}\mathcal{L}^{-A}\\
			d \equiv\,3\,(\text{mod}\,4)}}\dfrac{1}{\varphi(2d)}{\sum\limits_{\substack{k\leqslant x/2 \\ (k, 2d) =1 }}}'\dfrac{1}{h(k)}\\
	-\left(  \sum\limits_{\substack{d<\sqrt{x}\mathcal{L}^{-A}\\
			d \equiv\,1\,(\text{mod}\,4)}}\dfrac{1}{\varphi(d)}{\sum\limits_{\substack{k\leqslant x/4 \\ (k, d) =1 }}}'\dfrac{1}{h(k)}\right.
	- \left.\sum\limits_{\substack{d<\sqrt{x}\mathcal{L}^{-A}\\
			d \equiv\,3\,(\text{mod}\,4)}}\dfrac{1}{\varphi(d)}{\sum\limits_{\substack{k\leqslant x/4 \\ (k, d) =1 }}}'\dfrac{1}{h(k)}\right)\\
	=\sum_{d<\sqrt{x}\mathcal{L}^{-A}}\dfrac{\chi_4(d)}{\varphi(d)}\left({\sum\limits_{\substack{k\leqslant x/2 \\ (k, 2d) =1 }}}'\dfrac{1}{h(k)} - {\sum\limits_{\substack{k\leqslant x/4 \\ (k, d) =1 }}}'\dfrac{1}{h(k)}  \right)\!. 
\end{multline}
For $R_3^{(1)}$ we find that
$$R_3^{(1)} \ll \sum_{d\leqslant 4\sqrt{x} \mathcal{L}^{-A}}\dfrac{1}{\varphi(d)}\sum\limits_{\substack{\chi\,\text{mod}\, d \\ \chi \neq \chi_0}}\left|{\sum_{n\leqslant X}}'\,\dfrac{\chi(n)}{h(n)} \right|\!,$$
where $ X $ is one of the numbers: $ x, {x}/{2} $, or $ {x}/{4} $.

Similarly, for $ Q_{3}^{(2)} $, we have
$$Q_{3}^{(2)} = Q^{MT_{3, 2}}+ R_3^{(2)},$$
where
\begin{equation}\label{Q^{MT_{3, 2}}}
	Q^{MT_{3, 2}} = \sum_{d<\sqrt{x}\mathcal{L}^{-A}}\dfrac{\chi_4(d)}{\varphi(d)}\left({\sum\limits_{\substack{k\leqslant dy/2 \\ (k, 2d) =1 }}}'\dfrac{1}{h(k)} - {\sum\limits_{\substack{k\leqslant dy/4 \\ (k, d) =1 }}}'\dfrac{1}{h(k)}  \right)
\end{equation}
and
$$R_3^{(2)} \ll \sum_{d\leqslant 4\sqrt{x} \mathcal{L}^{-A}}\dfrac{1}{\varphi(d)}\sum\limits_{\substack{\chi\,\text{mod}\, d \\ \chi \neq \chi_0}}\left|{\sum_{n\leqslant dY}}'\,\dfrac{\chi(n)}{h(n)} \right|\!,\ \ Y\in \{y, y/2, y/4\}.$$

Thus, we get
\begin{equation}\label{QFinal}
	Q(x) = Q^{MT} + Q^{Err},
\end{equation}
where
\begin{equation}\label{Main}
	Q^{MT} = Q^{MT_1} + Q^{MT_2} + Q^{MT_{3, 1}} - Q^{MT_{3, 2}},
\end{equation}
\begin{equation}\label{Error}
	Q^{Err} \ll_A 
	|Q^{Err_1}|+|Q^{Err_2}|+ \sqrt{x}\mathcal{L}^A,
\end{equation}
$$Q^{Err_1} = \sum_{d\leqslant 4\sqrt{x} \mathcal{L}^{-A}}\dfrac{1}{\varphi(d)}\sum\limits_{\substack{\chi\,\text{mod}\, d \\ \chi \neq \chi_0}}\left( \left|{\sum_{n\leqslant dY}}'\,\dfrac{\chi(n)}{h(n)} \right| + \left|{\sum_{n\leqslant X}}'\,\dfrac{\chi(n)}{h(n)} \right|\right)\!,
$$
and $Q^{Err_2}$ is defined in \eqref{qerr2}.
%+|Q_{3, 1}^{Err}(x)|+|Q_{3, 2}^{Err}(x)|.$$

\section{Estimation of $ Q^{Err_1} $}\label{Korol}

In this section, using the contour integration method, we will show that
$ Q^{Err_1} \ll_B {x}{\mathcal{L}^{-B}} $, where $ B \geqslant 2 $ is an arbitrary fixed integer.

Let $ N $ be one of the numbers $ X $ or $ dY $, such that $ N \leqslant 4x $. Next, let $ Q = \sqrt{x} \mathcal{L}^{-A} $. Fix $ d \leqslant 4Q $ and a non-principal character $ \chi $ modulo $ d $. We will estimate the sum
$$S_\chi(N) = {\sum_{n\leqslant N}}'\dfrac{\chi(n)}{h(n)}.$$
To do this, we consider the Dirichlet generating series
$$\mathcal{F}(s) = {\sum_{n\geqslant 1}}'\dfrac{\chi(n)}{h(n)}n^{-s}, \ \ \Real s>1.$$
By expanding this series into an Euler product, we obtain
\begin{multline*}
	\mathcal{F}(s) = \left(1-\dfrac{\chi(2)}{2^s} \right)^{-1}\prod_{p \equiv 1 (\text{mod}\, 4)}\left(1+\dfrac{\chi(p)}{2p^s}+\dfrac{\chi^2(p)}{3p^{2s}}+\cdots \right)\\
	\times\prod_{p \equiv 3 (\text{mod}\, 4)}\left(1-\dfrac{\chi^2(p)}{p^{2s}}\right)^{-1}\!.    
\end{multline*}
%Так как 
%$$1+\dfrac{x}{2}+\dfrac{x^2}{3}+\cdots = (1-x)^{-\frac{1}{2}}(1-x^2)^{\frac{1}{24}}f_1(x),$$
Set $$f_1(x) = \left(1+\dfrac{x}{2}+\dfrac{x^2}{3}+\cdots\right)\left(1-x\right)^{\frac{1}{2}}(1-x^2)^{-\frac{1}{24}},$$
where $|x|<1$. Then, taking into account the equalities
$$(1-x)^{\frac{1}{2}} = 1 - \frac{x}{2}-\frac{x^2}{8}-\frac{x^3}{16}-\dfrac{5x^4}{128}+\cdots,$$
$$(1-x^2)^{-\frac{1}{24}} = 1 +\dfrac{x^2}{24} + \dfrac{25x^4}{1152} + \cdots,$$
by multiplying sequentially, we find
$$f_1(x) = 1 - \dfrac{x^3}{24} - \dfrac{49x^4}{2880}+ \cdots.$$
From here, we obtain
\begin{multline*}\mathcal{F}(s) = \prod_{p \equiv 1 (\text{mod}\, 4)}\left( 1 - \dfrac{\chi(p)}{p^s}\right)^{-\frac{1}{2}} \prod_{p \equiv 1 (\text{mod}\, 4)}\left( 1 -\dfrac{\chi^2(p)}{p^{2s}}\right)^{\frac{1}{24}}\\
	\times \prod_{p \equiv 1 (\text{mod}\, 4)}f_1\left( \dfrac{\chi(p)}{p^s}\right) \prod_{p \equiv 3 (\text{mod}\, 4)}\left(1-\dfrac{\chi^2(p)}{p^{2s}} \right)^{-1} \left(1-\dfrac{\chi(2)}{2^s} \right)^{-1}\!. 
\end{multline*}
Next, since
$$L(s, \chi) = \left( 1 - \dfrac{\chi(2)}{2^s}\right)^{-1}\prod_{p \equiv 1 (\text{mod}\, 4)}\left( 1 - \dfrac{\chi(p)}{p^s}\right)^{-1}\prod_{p \equiv 3 (\text{mod}\, 4)}\left( 1 - \dfrac{\chi(p)}{p^s}\right)^{-1}$$
and
$$L(s, \chi\chi_4) = \prod_{p \equiv 1 (\text{mod}\, 4)}\left( 1 - \dfrac{\chi(p)}{p^s}\right)^{-1}\prod_{p \equiv 3 (\text{mod}\, 4)}\left( 1 + \dfrac{\chi(p)}{p^s}\right)^{-1},$$
it follows that
\begin{multline*}\prod_{p \equiv 1 (\text{mod}\, 4)}\left( 1 - \dfrac{\chi(p)}{p^s}\right)^{-\frac{1}{2}}\\
	= \sqrt[4]{\left(1-\dfrac{\chi(2)}{2^s} \right)L(s, \chi)L(s, \chi\chi_4) \prod_{p \equiv 3 (\text{mod}\, 4)}\left( 1 - \dfrac{\chi^2(p)}{p^{2s}}\right)}.
\end{multline*}
From which we find
$$\mathcal{F}(s) = \sqrt[4]{L(s, \chi)L(s, \chi\chi_4)}\mathcal{F}_1(s)\mathcal{F}_2(s),$$
where
$$\mathcal{F}_1(s) = \left(\left( 1 - \dfrac{\chi(2)}{2^s}\right)\prod_{p \equiv 3 (\text{mod}\, 4)}\left( 1 - \dfrac{\chi^2(p)}{p^{2s}}\right) \right)^{-\frac{3}{4}}\prod_{p \equiv 1 (\text{mod}\, 4)}\left( 1 -\dfrac{\chi^2(p)}{p^{2s}}\right)^{\frac{1}{24}}$$
and
$$
\mathcal{F}_2(s) = \prod_{p \equiv 1 (\text{mod}\, 4)}f_1\left( \dfrac{\chi(p)}{p^s}\right)\!.
$$
Let $ T = T(d, \chi) \geqslant 10 $ be a certain number, different from the ordinates of the zeros of the functions $ L(s, \chi) $ and $ L(s, \chi\chi_4) $, the exact value of which will be chosen later. We also set $ N_1 = \lfloor N \rfloor + \frac{1}{2} $, $ \sigma_1 = \frac{1}{2} + \frac{1}{\mathcal{L}} $, and $ \sigma_2 = 2\sigma_1 $. Then, Perron's formula gives:
\begin{equation}\label{S_chi}
	S_\chi(N) = \dfrac{1}{2\pi i}\int_{\sigma_2-iT}^{\sigma_2+iT}\mathcal{F}(s)\dfrac{N_1^s}{s}ds + O\left(\dfrac{N\mathcal{L}}{T}\right)\!.
\end{equation}
Let $ \rho = \beta + i\gamma $ denote a non-trivial zero of $ L(s, \chi)L(s, \chi\chi_4) $. Consider the contour $ \Gamma $, which is a rectangle with vertices at $ \sigma_1 \pm iT $ and $ \sigma_2 \pm iT $, in which a horizontal cut is made from the side $ \Real s = \sigma_1 $ to each zero $ \rho $ under the condition that $ |\gamma| \leqslant T $. %По теореме Коши
%$$\dfrac{1}{2\pi i}\int_{\Gamma}\mathcal{F}(s)\dfrac{N_1^s}{s}ds = 0,$$
%отсюда находим 
Using Cauchy's residue theorem, we find
$$I = -(I_1+ I_2+I_3)-\sum_{|\gamma|\leqslant T}I(\rho),$$
where
$$I = \dfrac{1}{2\pi i}\int_{\sigma_2-iT}^{\sigma_2+iT}\mathcal{G}(s)ds,\ \ \mathcal{G}(s) = \mathcal{F}(s)\dfrac{N_1^s}{s},$$
$$I_1 = -\dfrac{1}{2\pi i}\int_{\sigma_1+iT}^{\sigma_2+iT}\mathcal{G}(s)ds,\ \ I_2 = \dfrac{1}{2\pi i}\int_{\sigma_1-iT}^{\sigma_2-iT}\mathcal{G}(s)ds,$$
$$I_3 = - \text{p.v.} \dfrac{1}{2\pi i}\int_{\sigma_1-iT}^{\sigma_1+iT}\mathcal{G}(s)ds$$
and
$$I(\rho) = \dfrac{1}{2\pi i}\left( \int_{\sigma_1+i\gamma+i0}^{\beta+i\gamma+i0} + \int_{\beta+i\gamma-i0}^{\sigma_1+i\gamma-i0}\right) \mathcal{G}(s)ds. $$
We will estimate the function $ \mathcal{F}(s) $ along the contour $ \Gamma $. Since
$$\left|1-\dfrac{\chi^2(p)}{p^{2s}}\right|^{-1} = \left|1+\dfrac{\chi^2(p)}{p^{2s}}+\cdots\right|\leqslant \dfrac{1}{1-p^{-2\sigma}},$$
it follows that for $\sigma\geqslant \sigma_1$, we have
$$|\mathcal{F}_1(s)|\leqslant \prod_{p}\left(1-\dfrac{1}{p^{2\sigma}} \right)^{-3/4} \ll \mathcal{L}^{3/4}.$$
From the easily verifiable inequality $ \mathcal{F}_2(s) \ll 1 $, we obtain
$$\mathcal{F}(s) \ll |L(s, \chi)L(s, \chi\chi_4)|^{1/4} \mathcal{L}^{3/4}\ll \left( |L(s, \chi)|^{1/2}+|L(s, \chi\chi_4)|^{1/2}\right)  \mathcal{L}^{3/4}. $$
Now we will estimate each of the integrals $ I_1, I_2, I_3 $, and $ I(\rho) $. For $ I_1 $ and $ I_2 $, we have
\begin{multline*}
	\max\left(|I_1|, |I_2|\right)\ll \int_{\sigma_1}^{\sigma_2}\left( |\mathcal{F}(\sigma+iT)| + |\mathcal{F}(\sigma-iT)|\right) \dfrac{N_1^{\sigma}}{\sqrt{\sigma^2+T^2}}d\sigma\\
	\ll \dfrac{x \mathcal{L}^{3/4}}{T}\left( \int_{\sigma_1}^{\sigma_2}|L(\sigma+iT, \chi)|^{1/2}d\sigma + \int_{\sigma_1}^{\sigma_2}|L(\sigma+iT, \chi\chi_4)|^{1/2}d\sigma\right.\\
	\left. +  \int_{\sigma_1}^{\sigma_2}|L(\sigma+iT, \overline{\chi})|^{1/2}d\sigma + \int_{\sigma_1}^{\sigma_2}|L(\sigma+iT, \overline{\chi}\chi_4)|^{1/2}d\sigma\right)\!. 
\end{multline*}
For the integral $ I_3 $, we obtain
\begin{multline*}
	|I_3|\ll N^{\sigma_1}\int_{-T}^{T}\dfrac{|\mathcal{F}(\sigma_1+it)|}{\sqrt{\sigma^2+t^2}}dt \\
	\ll \sqrt{x}\mathcal{L}^{3/4}\left(\int_{-T}^T\dfrac{|L(\sigma_1+it,\chi)|^{1/2}}{\sqrt{\sigma_1^2+t^2}}dt + \int_{-T}^T\dfrac{|L(\sigma_1+it,\chi\chi_4)|^{1/2}}{\sqrt{\sigma_1^2+t^2}}dt \right)\\
	\ll \sqrt{x}\mathcal{L}^{3/4} \left(\int_{0}^T\left( {|L(\sigma_1+it,\chi)|^{1/2}}+{|L(\sigma_1+it,\overline{\chi})|^{1/2}}\right) \dfrac{dt}{t+1}\right. \\
	+ \left.\int_{0}^T\left( {|L(\sigma_1+it,\chi\chi_4)|^{1/2}}+{|L(\sigma_1+it,\overline{\chi}\chi_4)|^{1/2}}\right) \dfrac{dt}{t+1}\right)\!.
\end{multline*}
Finally, for the integral $ I(\rho) $, we have
$$|I(\rho)| \ll \dfrac{\mathcal{L}^{3/4}}{|\gamma|+1}\int_{\sigma_1}^{\beta}\left(|L(\sigma+i\gamma, \chi)|^{1/2} + |L(\sigma+i\gamma, \chi\chi_4)|^{1/2} \right) x^{\sigma} d\sigma.$$
Thus, for $ Q^{Err_1} $, we obtain the estimate
\begin{equation*}
	Q^{Err_1} = \sum_{d\leqslant 4Q}\dfrac{1}{\varphi(d)}\sum_{\chi \neq \chi_0}|S_\chi(N)|\ll R_1+R_2+R_3+R_4,
\end{equation*}
where $ R_1 $ denotes the contribution to $ R $ from the remainder in the formula \eqref{S_chi}, $ R_2 $ and $ R_3 $ are the contributions from $ \max\left(|I_1|, |I_2|\right) $ and $ I_3 $, respectively, and $ R_4 $ is the contribution from the sum $ \sum_{|\gamma| \leqslant T} I(\rho) $. 

Since $N\leqslant x$, we get
$$R_1 \ll x\mathcal{L} \sum_{d\leqslant 4Q}\dfrac{1}{\varphi(d)}\sum\limits_{\substack{\chi\,\text{mod}\, d \\ \chi \neq \chi_0}}\dfrac{1}{T} = x\mathcal{L} S_1(4Q),$$
where the meaning of the notation $ S_1(4Q) $ is obvious.
%Если $\chi_1$ и $\chi_2$~-- два различных характера по модулю $d$, то $\chi_1\chi_4$ и $\chi_2\chi_4$ также два различных характера, модуль каждого из которых делит $4d$. Отсюда находим
Using Lemma \ref{L1} and the fact that $ \chi $ and $ \overline{\chi} $ simultaneously run through all characters modulo $ d $, we find
\begin{multline*}R_2 \ll x\mathcal{L}^{3/4}\sum_{d\leqslant 4Q}\dfrac{1}{\varphi(d)}\sum\limits_{\substack{\chi\,\text{mod}\, d \\ \chi \neq \chi_0}}\dfrac{1}{T}\left(\int_{\sigma_1}^{\sigma_2}|L(\sigma+iT, \chi)|^{1/2}d\sigma\right.\\
	+ \left. \int_{\sigma_1}^{\sigma_2}|L(\sigma+iT, \chi\chi_4)|^{1/2}d\sigma\right)
	\ll x\mathcal{L}^{3/4} S_2(16Q),
\end{multline*}
where
$$ S_2(16Q) = \sum_{d\leqslant 16Q}\dfrac{1}{\varphi(d)}\sum\limits_{\substack{\chi\,\text{mod}\, d \\ \chi \neq \chi_0}}\dfrac{1}{T}\left(\int_{\sigma_1}^{\sigma_2}|L(\sigma+iT, \chi)|^{1/2}d\sigma\right)\!. 
$$
Similarly, we obtain
\begin{multline*}
	R_3 \ll \sqrt{x} \mathcal{L}^{3/4}\sum_{d\leqslant 4Q}\dfrac{1}{\varphi(d)}\sum\limits_{\substack{\chi\,\text{mod}\, d \\ \chi \neq \chi_0}} \left(\int_{0}^T {|L(\sigma_1+it,\chi)|^{1/2}}\dfrac{dt}{t+1}\right.\\
	+\left.\int_{0}^T {|L(\sigma_1+it,\chi\chi_4)|^{1/2}}\dfrac{dt}{t+1} \right)\ll\sqrt{x} \mathcal{L}^{3/4} S_3(16Q),
\end{multline*}
where
$$S_3(16Q) = \sum_{d\leqslant 16Q}\dfrac{1}{\varphi(d)}\sum\limits_{\substack{\chi\,\text{mod}\, d \\ \chi \neq \chi_0}} \int_{0}^T {|L(\sigma_1+it,\chi)|^{1/2}}\dfrac{dt}{t+1}.$$
Finally, we have
\begin{equation*}
	R_4 \ll \mathcal{L}^{3/4}\sum_{d \leqslant 4Q}\dfrac{1}{\varphi(d)}\sum\limits_{\substack{\chi\,\text{mod}\, d \\ \chi \neq \chi_0}}\sum_{|\gamma|\leqslant T}\dfrac{j(\gamma)}{|\gamma|+1},
\end{equation*}
$$j(\gamma) = \int_{\sigma_1}^{\beta}\left( |L(\sigma+i\gamma, \chi) |^{1/2}+|L(\sigma+i\gamma, \chi\chi_4)|^{1/2}\right) x^\sigma d\sigma,$$
where the summation in the inner sum is taken over the non-trivial zeros $ \rho = \beta + i\gamma $ of the function $ L(s, \chi)L(s, \chi\chi_4) $, satisfying the conditions

\begin{equation}\label{cond_zero}
	\sigma_1<\beta\leqslant 1,\ |\gamma|\leqslant T.
\end{equation}

Now let us choose the value $ T = T(d, \chi) $. Suppose the character $ \chi $ modulo $ d $ is induced by a primitive character $ \chi_1 $ modulo $ d_1 $. In this case, for some $ r \geqslant 1 $, we have $ d = d_1 r $ and the equality 
$$
\chi = \chi_1\chi_0,
$$
holds, where $ \chi_0 $ is the principal character modulo $ d $. %Отсюда при $\Real s>0$ находим
%\begin{equation*}
%	L(s, \chi\chi_4) = \prod_{p \nmid d}\left(1-\dfrac{\chi_1(p)\chi_4(p)}{p^s} \right)^{-1} \\
%	= L(s, \chi_1\chi_4) \prod\limits_{\substack{p|d \\ p\,\nmid\, 2d_1}}\left( 1-\dfrac{\chi_4(p)\chi_1(p)}{p^s}\right).
%	\ll |L(s, \chi_1\chi_4)| \tau(r).
%\end{equation*}
% Так что $L(s, \chi\chi_4)$ и $L(s, \chi_1\chi_4)$ имеют одни и те же нетривиальные нули с учётом кратности; то же справедливо для $L(s, \chi)$ и $L(s, \chi_1)$. 
Since for any $ t \leqslant x $ on the interval $ [t, t+1] $ there are at most $ O(\mathcal{L}) $ ordinates of the zeros of $ L(s, \chi)L(s, \chi\chi_4) $, there exists $ h = h(t) $ such that $ 0 \leqslant h \leqslant 1 $ and $ t + h $ does not coincide with any of the ordinates of the zeros. We will divide $ (1, 4Q] $ into intervals of the form $ \left(M, M_1\right] $, where $ M = \max\left(1, \frac{4Q}{2^{\nu+1}} \right) $, $ M_1 = \frac{4Q}{2^\nu} $, and $ \nu \geqslant 0 $. From this, we have $ M_1 \leqslant 2M $. If for some $ M $ it holds that $ M < d_1 \leqslant M_1 $, we set $ T(d, \chi) = M \mathcal{L}^C + h $, where $ h = h(M\mathcal{L}^C) $ is chosen as above, and $ C < A $ will be chosen later. It is clear that with this choice, $ T $ depends only on $ d_1 $ and $ \chi_1 $, and moreover, $ M \mathcal{L}^C \leqslant T \leqslant 2M \mathcal{L}^C $. Now we will estimate each of the sums $ R_j $.

%Имеем
%\begin{multline}\label{R_1}
%	R_1\ll x\mathcal{L}\sum_{d_1\leqslant Q}\sum_{r\leqslant Q/d_1}\dfrac{1}{\varphi(d_1 r)}{\sum_{\chi\,\text{mod}\, d_1}}^*\dfrac{1}{T(d_1, \chi_1)}\\
%	\ll x\mathcal{L}{\sum_{M_1\leqslant Q}}^{''}\sum_{M<d_1\leqslant M_1}\dfrac{1}{\varphi(d_1)}\sum_{r \leqslant Q/d_1}\dfrac{1}{\varphi(r)}{\sum_{\chi\,\text{mod}\, d_1}}^*\dfrac{1}{M\mathcal{L}^C},
%	\end{multline}
%где значок $*$ означает суммирование по примитивным характерам, а значок $''$~-- суммирование по правым концам разбиения отрезка $[1, Q].$ Так как число промежутков $(M, M_1]$ из разбиения есть $O(\ln x)$ и число примитивных характеров по модулю $d_1$ не превосходит $\varphi(d)$,
%$${\sum_{\chi_1 \text{mod}\, d_1}}^* 1\leqslant \varphi(d),$$
%то из \eqref{R_1} находим
%$$R_1\ll x \mathcal{L}^{3-C}.$$

%\textbf{Оценка $R_2$.} Мы оценим $R_2^{(2)}$, сумма $R_2^{(1)}$ при этом оценится аналогично. 

%Пусть теперь $\chi^{*}$~-- примитивный характер, индуцирующий $\chi_1\chi_4$. Тогда так как либо $d_1$ нечётно, либо $d_1$ кратно четырём, то
%$$\chi^{*} = 
%\begin{cases}
%	\chi_1\chi_4\ \text{mod}\  4d_1, \text{если $d_1$~-- нечётное},\\
%	\chi_1(\cdot|d_1/4)\ \text{mod}\ d_1/4, \text{если $4 || d_1$},\\
%	\chi_1\chi_4\ \text{mod}\ d_1, \text{если $8|d_1.$}
%	\end{cases}
%$$
In the work \cite{Korol2010}, during the proof of Lemma 13, the following estimates were obtained
$$S_1(Q) \ll \mathcal{L}^{2-C},\ \  S_2(Q) \ll \mathcal{L}^{11/4-3C/4}(\ln \mathcal{L})^2,\ \ S_3(Q)\ll Q\mathcal{L}^{(C+11)/4}(\ln \mathcal{L})^2.$$
Therefore the same estimates hold for the sums $$S_1(4Q),\  S_2(16Q),\  S_3(16Q).$$
Thus,
\begin{equation}\label{R123}
R_1 \ll x \mathcal{L}^{3-C},\ \ R_2 \ll x \mathcal{L}^{7/2-3C/4}(\ln \mathcal{L})^2,\ \ R_3 \ll x \mathcal{L}^{7/2 + C/4-A} (\ln \mathcal{L})^2.
\end{equation}
Consider the sum $ R_4 $. We set
$$S_4(Q) = \sum_{d \leqslant Q}\dfrac{1}{\varphi(d)}\sum\limits_{\substack{\chi\,\text{mod}\, d \\ \chi \neq \chi_0}}\sum_{|\gamma|\leqslant T}\dfrac{1}{|\gamma|+1} \int_{\sigma_1}^{\beta} |L(\sigma+i\gamma, \chi) |^{1/2} x^\sigma d\sigma,$$
where the summation in the inner sum is taken over all non-trivial zeros $ \rho = \beta + i\gamma $ of the function $ L(s, \chi) $ under the condition \eqref{cond_zero}. In \cite{Korol2010}, during the proof of Lemma 13, an estimate was obtained
$$S_4(Q)\ll x\mathcal{L}^{(C-A)/4+15}(\ln \mathcal{L})^2.$$
We can obtain the same result for $R_4$ by repeating the calculations verbatim. The only difference is that we will apply Lemma 6 from \cite{Korol2010} along with Lemma \ref{L3} from the present work. We have
\begin{equation}\label{R_4}
R_4 \ll \mathcal{L}^{3/4} \cdot x\mathcal{L}^{(C-A)/4+15}(\ln \mathcal{L})^2 = x\mathcal{L}^{(C-A)/4+63/4}(\ln \mathcal{L})^2.
\end{equation}
For an arbitrary $ B \geqslant 2 $, let us take
\begin{equation}\label{CA}
C= 2B+5,\ \ A = 6B+70;
\end{equation} 
from this, given the estimates \eqref{R123} and \eqref{R_4}, we obtain
$$Q^{Err_1}\ll_B x \mathcal{L}^{-B}.$$
The claim follows. 
\section{Transformation of  $Q^{Err_2}$}\label{Fouvry-Radziwill}
Let $0<\varepsilon< 1/10000$ and $x \geqslant x_0(\varepsilon)$. Set
$\mathcal{J} =  \left(\right.\!\!\exp\left(\mathcal{L}^\varepsilon\right), x^\varepsilon\!\!\left.\right].$ Let $\mathcal{A}$ denote the set of all integers $n \leqslant x$ that are divisible by at least one prime $p \in \mathcal{J}$, and let $\mathcal{B}$ be its complement,
$$\mathcal{B} = \left\{n\leqslant x: p|n\Rightarrow p\not\in\mathcal{J} \right\}\!.$$ 
In the sum $Q^{Err_2}$, we partition the interval $\left(\right.\!\!\sqrt{x}\mathcal{L}^{-A}, \sqrt{x}\mathcal{L}^{A}\!\!\left.\right]$ into dyadic intervals $(D, D_1]$, where
$$D = \max\left( \sqrt{x}\mathcal{L}^{-A}, \dfrac{1}{2^{\nu+1}}\sqrt{x}\mathcal{L}^{A}\right),\  D_1 = \dfrac{1}{2^{\nu}}\sqrt{x}\mathcal{L}^{A},\ \  \nu \geqslant 0.$$
For brevity, we write $d\sim D$ if $D<d\leqslant D_1$. Summation over the left endpoints of these intervals will be denoted by double primes. In this case, the sum $Q^{Err_2}$ can be written as follows
$$Q^{Err_2} = {\sum_{D}}''\sum_{d\sim D}\chi_4(d)\left( {\sum\limits_{\substack{n \leqslant x \\ n\equiv 1\,(\text{mod}\, d)}}}'\dfrac{1}{h(n)} - \dfrac{1}{\varphi(d)}{\sum\limits_{\substack{n \leqslant x \\ (n, d) = 1}}}'\dfrac{1}{h(n)} \right)\!.$$
We will show that the contribution of those $n$ for which $n\in \mathcal{B}$ can be neglected. Indeed, using Shiu's theorem and the fact that the condition $p\mid n \Rightarrow p \notin \mathcal{J}$ is multiplicative, we find:
\begin{multline*}
{\sum\limits_{\substack{n \in \mathcal{B} \\ n\equiv 1\!\!\!\!\!\pmod{d}}}}'\dfrac{1}{h(n)} = {\sum\limits_{\substack{n \leqslant x \\ n\equiv 1\!\!\!\!\!\pmod{d} \\ p|n \Rightarrow p \not\in \mathcal{J} }}}'\dfrac{1}{h(n)}\ll \dfrac{x}{\varphi(d)\mathcal{L}}\exp\left({\sum\limits_{\substack{p \leqslant x \\ p\not\in\mathcal{J}}}}'\dfrac{1}{ph(p)} \right)\\
\ll \dfrac{x}{\varphi(d)\mathcal{L}}\exp\left( {\sum\limits_{\substack{p \leqslant\exp\left(\mathcal{L}^\varepsilon\right)  \\ p\equiv 1\!\!\!\!\!\pmod{4}}}}\dfrac{1}{2p} + {\sum\limits_{\substack{x^\varepsilon<p\leqslant x \\ p\equiv 1\!\!\!\!\!\pmod{4}}}}\dfrac{1}{2p}\right)\\
\ll \dfrac{x}{\varphi(d)\mathcal{L}}\exp\left( \dfrac{1}{4}\varepsilon\ln\mathcal{L} +O(\varepsilon)\right)\ll_\varepsilon \dfrac{x}{\varphi(d)\mathcal{L}^{1-\frac{\varepsilon}{4}}}. 
\end{multline*}
In the same way, from the Richert–Halberstam inequality we obtain the estimate
$${\sum\limits_{\substack{n \in \mathcal{B} \\ (n, d) = 1}}}'\dfrac{1}{h(n)}\ll \dfrac{x}{\mathcal{L}^{1-\frac{\varepsilon}{4}}}.$$
From this, we obtain that the contribution of $n\in\mathcal{B}$ to the sum $Q^{Err_2}$ does not exceed
$$O_\varepsilon\left( {\sum_{D}}''\sum_{d\sim D}\dfrac{x}{\varphi(d)\mathcal{L}^{1-\frac{\varepsilon}{4}}}\right) =O_\varepsilon\left(  \dfrac{x\ln\mathcal{L}}{\mathcal{L}^{1-\frac{\varepsilon}{4}}}\right)\!. $$
%Итак, 
%\begin{multline*}Q^{Err_2} = {\sum_{D_1\leqslant \sqrt{x}\mathcal{L}^A}}''\sum_{d\sim D}\chi_4(d)\left( {\sum\limits_{\substack{n \in \mathcal{A} \\ n\equiv 1\,(\text{mod}\, d)}}}'\dfrac{1}{h(n)} - \dfrac{1}{\varphi(d)}{\sum\limits_{\substack{n \in \mathcal{A} \\ (n, d) = 1}}}'\dfrac{1}{h(n)} \right)\\
%	+O_\varepsilon\left(  \dfrac{x\ln\mathcal{L}}{\mathcal{L}^{1-\frac{\varepsilon}{4}}}\right).
%	\end{multline*}
%$\mathscr{E}$
%$\varepsilon$
%$\mathcal{E}$
Let $\mathcal{A}^*$ denote the set of those $n \in \mathcal{A}$ for which each prime divisor $p\in\mathcal{J}$ appears to the first power:
$$\mathcal{A}^* = \left\{n\in \mathcal{A}: p|n,\, p\in\mathcal{J} \Rightarrow p^2 \nmid n\right\}.$$
Let us estimate the contribution of $n\in\mathcal{A}\setminus\mathcal{A}^*$ to the sum $Q^{Err_2}$. Note that if $n\in\mathcal{A}\setminus\mathcal{A}^*$, then $n$ has the form $n = p^2 m$, where $p \in \mathcal{J}$. In the subsequent estimates, the quantity $p^2$ will appear in the denominator. Taking into account that $p$ is large, we obtain that the contribution of $n\in\mathcal{A}\setminus\mathcal{A}^*$ will be small.

First, let us show that if $h(n)\neq 0$, then the inequality $h(n)\geqslant h(m)$ holds. Indeed, let $n = p^\nu m'$, where $\nu\geq 2$ and $(m', p)=1$. Since $x\geqslant x_0(\varepsilon)$, we have $p>2$, and therefore either $p\equiv 1 \pmod{4}$ or $p\equiv 3\pmod{4}$. Taking into account the equality 
\begin{equation}\label{h(p^nu)}
h(p^\nu)=
\begin{cases}
\nu + 1, & \text{if}\ \ p\equiv 1\!\!\!\!\!\pmod{4},\\
\mathbb{I}_{2|\nu}, & \text{if}\ \ p\equiv 3\!\!\!\!\!\pmod{4},
\end{cases}
\end{equation}
in the first case we find
$$h(n) = h(p^\nu m') = (\nu+1)h(m')>(\nu-1)h(m')=h(p^{\nu-2}m')=h(m).$$
Let us consider the second case. Since $h(n)\neq 0$, it follows that $\nu$ is even. Hence, we have $h(p^\nu) = 1$ and
$$h(n) = h(p^\nu)h(m') = h(p^{\nu-2})h(m') = h(m).$$ The required inequality is proved. From this, using Shiu's theorem, we obtain
\begin{multline*}
{\sum\limits_{\substack{n\in \mathcal{A}\setminus\mathcal{A}^* \\ n\equiv 1\!\!\!\!\!\pmod{d}}}}'\dfrac{1}{h(n)}\leqslant \underset{mp^2\equiv 1\!\!\!\!\!\pmod{d}} {\underset{mp^2\leqslant x,\, p\in\mathcal{J}}{{\sum\sum}'}}\dfrac{1}{h(mp^2)}\\
\leqslant \sum_{p\in\mathcal{J}}{\sum\limits_{\substack{m\leqslant x/p^2 \\ m\equiv p^{-2}\!\!\!\!\!\pmod{d}}}}'\dfrac{1}{h(m)}\ll\sum_{p\in\mathcal{J}}\dfrac{x}{p^2\varphi(d)\ln\left({x}/{p^2}\right)}\exp\left({\sum\limits_{\substack{q\leqslant x}}}'\dfrac{1}{qh(q)} \right)\!.
\end{multline*}
Since $${\sum\limits_{q\leqslant x}}'\dfrac{1}{qh(q)} = \dfrac{1}{2}+\dfrac{1}{2}\!\!\!\!\!\sum\limits_{\substack{q\leqslant x \\ q\equiv 1\!\!\!\!\!\pmod{4}}}\dfrac{1}{q} = \dfrac{1}{4}\ln \mathcal{L} + O(1),$$
we get
\begin{equation}
\label{great_estimate}
{\sum\limits_{\substack{n\in \mathcal{A}\setminus\mathcal{A}^* \\ n\equiv 1\!\!\!\!\!\pmod{d}}}}'\dfrac{1}{h(n)}\ll\dfrac{x}{\varphi(d)\mathcal{L}}\cdot\mathcal{L}^{1/4}\sum_{p>\exp\left(\mathcal{L}^\varepsilon\right) }\dfrac{1}{p^2}\ll \dfrac{x}{\varphi(d)\mathcal{L}^{3/4}\exp\left(\mathcal{L}^\varepsilon\right)}.
\end{equation}
In the same way, using the Richert~--~Halberstam inequality, we obtain
\begin{equation}\label{second_estimate}
{\sum\limits_{\substack{n\in \mathcal{A}\setminus\mathcal{A}^* \\ (n,d)=1}}}'\dfrac{1}{h(n)}\leqslant {\underset{mp^2\leqslant x,\, p\in\mathcal{J}}{{\sum\sum}'}}\dfrac{1}{h(mp^2)} \ll \dfrac{x}{\mathcal{L}^{3/4}\exp\left(\mathcal{L}^\varepsilon\right)}.
\end{equation}
From this, the desired contribution of $n\in\mathcal{A}\setminus\mathcal{A}^*$ can be estimated as follows
$${\sum_{D}}''\sum_{d\sim D}\dfrac{x}{\varphi(d)\mathcal{L}^{3/4}\exp\left(\mathcal{L}^\varepsilon\right)}\ll \dfrac{x\ln\mathcal{L}}{\mathcal{L}^{3/4}\exp\left(\mathcal{L}^\varepsilon\right)}\ll \dfrac{x\ln\mathcal{L}}{\mathcal{L}^{1-\frac{\varepsilon}{4}}}.$$
Thus, we obtain
\begin{equation*}Q^{Err_2} = {\sum_{D}}''\sum_{d\sim D}\chi_4(d)\left( {\sum\limits_{\substack{n \in \mathcal{A}^* \\ n\equiv 1\,(\text{mod}\, d)}}}'\dfrac{1}{h(n)} - \dfrac{1}{\varphi(d)}{\sum\limits_{\substack{n \in \mathcal{A}^* \\ (n, d) = 1}}}'\dfrac{1}{h(n)} \right) + R_1,
\end{equation*}
where
$$R_1 \ll_\varepsilon  \dfrac{x\ln\mathcal{L}}{\mathcal{L}^{1-\frac{\varepsilon}{4}}}.$$
Let $\mathcal{A}_k$, where $k\geq 1$, denote the set of all elements from $\mathcal{A}^*$ having exactly $k$ prime divisors from $\mathcal{J}$. Also, let $\mathcal{A}_0 = \mathcal{B}$.

For each $n\in\mathcal{A}_k$, there are exactly $k$ representations of the form
$$
n = pm,
$$
where $p\in\mathcal{J}$, $m\in\mathcal{A}_{k-1}$, $m\leqslant x/p$, and $p\nmid m$. Since every prime $p$ dividing $n\in\mathcal{A}_k$ appears in $n$ to the first power, we have $p\equiv 1 \pmod{4}$ and
$$
h(n) = h(mp) = 2h(m).
$$
Since for every prime $p\in\mathcal{J}$ the inequality
$$
p > \exp\left(\mathcal{L}^\varepsilon\right)
$$
holds, the largest possible value of $k$ is less than $\mathcal{L}^{1-\varepsilon}$. Thus, we have
\begin{multline}\label{some_link}
{\sum\limits_{\substack{n \in \mathcal{A}^* \\ n\equiv 1\,(\text{mod}\, d)}}}'\dfrac{1}{h(n)} = \sum_{1\leqslant k< \mathcal{L}^{1-\varepsilon}}\dfrac{1}{k}\sum\limits_{\substack{p\in\J \\ p\equiv 1\!\!\!\!\!\pmod{4}}}{\sum\limits_{\substack{m\leqslant x/p \\ m\in\A_{k-1} \\ mp\equiv 1\!\!\!\!\!\pmod{d} \\ (m, p)=1}}}'\dfrac{1}{h(mp)}\\
=  \sum_{1\leqslant k< \mathcal{L}^{1-\varepsilon}}\dfrac{1}{2k}\sum\limits_{\substack{p\in\J \\ p\equiv 1\!\!\!\!\!\pmod{4} \\ p\nmid d}}{\sum\limits_{\substack{m\leqslant x/p \\ m\in\A_{k-1}, \\ m\equiv p^{-1}\!\!\!\!\!\pmod{d} \\ (m, p) = 1}}}'\dfrac{1}{h(m)}.
\end{multline}
Similarly, we find that
$${\sum\limits_{\substack{n \in \mathcal{A}^* \\ (n, d) =1}}}'\dfrac{1}{h(n)} = \sum_{1\leqslant k< \mathcal{L}^{1-\varepsilon}}\dfrac{1}{2k}\sum\limits_{\substack{p\in\J \\ p\equiv 1\!\!\!\!\!\pmod{4} \\ p\nmid d}}{\sum\limits_{\substack{m\leqslant x/p \\ m\in\A_{k-1} \\ (m, dp) = 1}}}'\dfrac{1}{h(m)}.$$
By virtue of the estimates \eqref{great_estimate} and \eqref{second_estimate}, omitting the condition $(m, p) = 1$ in both sums when estimating $Q^{Err_2}$ leads to an error of order
$$
O\left(x(\ln\LL)^2\LL^{-3/4}\exp\left(-\LL^\varepsilon\right)\right).
$$
Indeed, if $p \mid m$, then $m = pm'$ for some $m'$. Thus, including such $p$, for example, in the sum \eqref{some_link} when estimating $Q^{Err_2}$ leads to an error not exceeding (in order)
\begin{multline*}
{\sum_{{D}}}''\sum_{d\sim {D}}\sum_{k< \mathcal{L}^{1-\varepsilon}}\dfrac{1}{k}\sum\limits_{\substack{p \in \mathcal{J} \\ p\nmid d}}\sum\limits_{\substack{m'\leqslant x/p^2 \\ m'\equiv p^{-2}\!\!\!\!\!\pmod{d}}}\dfrac{1}{h(m')}\\
\ll {\sum_{{D}}}''\sum_{d\sim {D}} \dfrac{x\ln \mathcal{L}}{\varphi(d)\mathcal{L}^{3/4}\exp\left(\mathcal{L}^\varepsilon\right)}\ll \dfrac{x(\ln \mathcal{L})^2}{\mathcal{L}^{3/4}\exp\left(\mathcal{L}^\varepsilon\right)}.
\end{multline*}
Since $m \leqslant x/p$ and $p \geqslant \exp\left(\mathcal{L}^{\varepsilon}\right)$, we have $m \leqslant x\exp\left(\mathcal{L}^{-\varepsilon}\right)$. We partition the summation ranges for $p$ (the interval $\mathcal{J}$) and $m$ (the segment $[1,\, x\exp\left(\mathcal{L}^{-\varepsilon}\right)]$) into dyadic intervals $(N, N_1]$ and $(M, M_1]$, as was done above for the variable $d$.

Denoting summation over the left endpoints of such intervals by double primes, we obtain
\begin{multline}\label{QErr2_}
Q^{Err_2} = \dfrac{1}{2}\sum_{k<\LL^{1-\varepsilon}}\dfrac{1}{k}{\sum\limits_{\substack{D, N, M}}}''\sum_{d\sim D}\chi_4(d)\sum\limits_{\substack{p \sim N \\ p\equiv 1\!\!\!\!\!\pmod{4} \\ p\nmid d}}H(M; k, d, p)
+ R_2,
\end{multline}
where
$$H(M; k, d, p) = {\sum\limits_{\substack{m \sim M \\ mp\leqslant x\\  m\in\A_{k-1} \\m\equiv p^{-1}\!\!\!\!\!\pmod{d} }}}'\dfrac{1}{h(m)}-\dfrac{1}{\varphi(d)}{\sum\limits_{\substack{m \sim M \\ mp\leqslant x\\ m\in\A_{k-1}\\ (m, d) = 1}}}'\dfrac{1}{h(m)}$$
and
$$R_2 \ll_\varepsilon  \dfrac{x\ln\mathcal{L}}{\mathcal{L}^{1-\frac{\varepsilon}{4}}}.$$
\section{Preparation for the dispersion method}
Let $A_1 \geq A+3$ be a sufficiently large number, which will be chosen later. In the sum \eqref{QErr2_}, we retain only those terms for which
$$
x\LL^{-A_1} < NM \leqslant x.
$$
The inequality $NM > x$ is false, since otherwise
$$
x \geqslant pm \geqslant NM > x.
$$
Let us estimate the contribution from the terms for which $NM \leqslant x\LL^{-A_1}$. We have
\begin{multline}\label{link1}
\sum\limits_{\substack{p \sim N \\ p\equiv 1\!\!\!\!\!\pmod{4} \\ p\nmid d}}H(M; k, d, p) \ll \sum\limits_{\substack{p \sim N \\ p\equiv 1\!\!\!\!\!\pmod{4} \\ p\nmid d}}{\sum\limits_{\substack{m \sim M \\  m\in\A_{k-1} \\m\equiv p^{-1}\!\!\!\!\!\pmod{d}}}} 1+\sum\limits_{\substack{p \sim N \\ p\equiv 1\!\!\!\!\!\pmod{4} \\ p\nmid d}}\dfrac{1}{\varphi(d)}{\sum\limits_{\substack{m \sim M \\  m\in\A_{k-1}\\ (m, d) = 1}}}1\\
\leqslant k\sum\limits_{\substack{n\leqslant 4NM \\ n \equiv 1\!\!\!\!\!\pmod{d}}}1+\dfrac{NM}{\varphi(d)}\\
\ll k\left(\dfrac{NM}{d}+1 \right)+ \dfrac{NM}{\varphi(d)}
\ll \dfrac{kNM}{\varphi(d)}+k.
\end{multline}
Thus, the desired contribution to the sum $Q^{Err_2}$ does not exceed
\begin{equation}\label{link2}
\sum_{k<\LL^{1-\varepsilon}}\dfrac{1}{k}{\sum_{D, N, M}}''\sum_{d\sim D}\left(\dfrac{kNM}{\varphi(d)}+k\right)\ll x\LL^{3-A_1}+\sqrt{x}\LL^{A+3}\ll_A x\LL^{3-A_1}.
\end{equation}
So we get
\begin{equation*}
Q^{Err_2} = \dfrac{1}{2}\sum_{k<\LL^{1-\varepsilon}}\dfrac{1}{k}{\sum\limits_{\substack{D, N, M \\ x\mathcal{L}^{-A_1}<NM\leqslant x}}}''\sum_{d\sim D}\chi_4(d)\sum\limits_{\substack{p \sim N \\ p\equiv 1\!\!\!\!\!\pmod{4} \\ p\nmid d}}H(M; k, d, p) + R_3,
\end{equation*}
where
$$R_3 \ll_\varepsilon \dfrac{x\ln\mathcal{L}}{\mathcal{L}^{1-\frac{\varepsilon}{4}}}+\dfrac{x}{\LL^{A_1-3}} \ll_{\varepsilon, A_1} \dfrac{x\ln\mathcal{L}}{\mathcal{L}^{1-\frac{\varepsilon}{4}}}.$$
Put
$$\alpha(m)=
\begin{cases}
\dfrac{1}{h(m)}, & \text{if\ } h(m) \neq 0,\ m \leq x\exp\left(-\LL^\varepsilon\right), m \in \mathcal{A}_{k-1},\\
0, & \text{otherwise,}
\end{cases}
$$
\begin{equation}\label{beta(n)}
\beta(n)=
\begin{cases}
1, &\text{if\ } n = p \in\mathcal{J}, p\equiv 1\!\!\!\!\!\pmod{4},\\
0, & \text{otherwise.}
\end{cases}
\end{equation}
Then we can write
$$Q^{Err_2} = \dfrac{1}{2}\sum_{k<\LL^{1-\varepsilon}}\dfrac{1}{k}{\sum\limits_{\substack{D, N, M \\ x\mathcal{L}^{-A_1}<NM\leqslant x}}}''\sum_{d\sim D}\chi_4(d)U_k(N, M, d) + R_3,$$
where
$$U_k(N, M, d) = \sum\limits_{\substack{n \sim N \\ m\sim M \\nm\leqslant x\\ nm\equiv 1\!\!\!\!\!\pmod{d}}}\alpha(m)\beta(n) - \dfrac{1}{\varphi(d)}\sum\limits_{\substack{n \sim N \\ m\sim M \\ nm\leqslant x\\(nm, d) = 1}}\alpha(m)\beta(n).$$
Let us remove the condition $nm\leqslant x$. To do this, we approximate $U_k(N, M, d)$ by the sum
$$U_f(N, M, d) = \sum\limits_{\substack{n \sim N \\ m\sim M \\ nm\equiv 1\!\!\!\!\!\pmod{d}}}\alpha(m)\beta(n)f_\delta\left(\dfrac{mn}{x} \right)  - \dfrac{1}{\varphi(d)}\sum\limits_{\substack{n \sim N \\ m\sim M \\ (nm, d) = 1}}\alpha(m)\beta(n)f_\delta\left(\dfrac{mn}{x} \right),$$
where the function $f_\delta(x)$ is defined in \eqref{f_delta(x)}, $\delta = \LL^{-A_2}$, and the constant $A_2 > A_1$ will be chosen later.

For the variables $n$ and $m$ appearing in the summation in $U_k$ and $U_f$, the following alternative is possible:
\begin{enumerate} 
\item $mn/x\leqslant \delta,$
\item $ mn/x \geqslant 1+\delta,$
\item $\delta < mn/x< 1+\delta.$
\end{enumerate}
Note that in the first case, $n$ and $m$ appear in each of the sums $U_f$ and $U_k$ with zero weight, so their contribution to the difference $U_f - U_k$ is zero.
%Это следует из определения функции $f_\delta$ и следующего противоречивого неравенства:
%$$x\LL^{-A_1} < NM\leqslant nm\leqslant x \LL^{-A_2}.$$
The same holds for the second case. The third case is equivalent to the fulfillment of one of the following inequalities
\begin{equation}\label{ineq1}
x\LL^{-A_2}<mn\leqslant x \LL^{-A_1},\ \ x<mn< x(1+\LL^{-A_2})
\end{equation} or
\begin{equation}\label{ineq0}
x\LL^{-A_1}<mn\leqslant x.
\end{equation}
Proceeding as in the derivation of the estimates \eqref{link1} and \eqref{link2}, we obtain that the contribution to $Q^{Err_2}$ from those $m$ and $n$ which satisfy the inequalities \eqref{ineq1} does not exceed
$$
x\LL^{3-A_1} + x\LL^{3-A_2} \ll x\LL^{3-A_1}.
$$
Finally, those $m$ and $n$ which satisfy the inequality \eqref{ineq0} appear with weight $1$ in each of the sums $U_k$ and $U_f$, so their contribution to the difference $U_f - U_k$ is zero.

Thus, the error from replacing $U_k$ by $U_f$ does not exceed, in order, $x \LL^{3-A_1}$. From this we obtain
\begin{equation}\label{QErr2}
Q^{Err_2} = \dfrac{1}{2}\sum_{k<\LL^{1-\varepsilon}}\dfrac{1}{k}{\sum\limits_{\substack{D, N, M \\ x\LL^{-A_1}<NM\leqslant x}}}''\sum_{d\sim D}\chi_4(d)U_f(N, M, d) + R_4,
\end{equation}
where $$R_4\ll_{\varepsilon, A, A_1, A_2}\dfrac{x\ln \LL}{\LL^{1-\frac{\varepsilon}{4}}}.$$
Let us now write the inverse Mellin transform for the function $F_\delta$, defined in \eqref{F_d},
\begin{equation*}
f_\delta(u) = \dfrac{1}{2\pi i}\int_{0-\infty i}^{0+\infty i}F_\delta(s)u^{-s}ds = \dfrac{1}{2\pi} \int_{-\infty}^{+\infty }F_\delta(it)u^{-it} dt.
\end{equation*}
Let $T = \LL^{A_3}$ for some $A_3 > A_2 + 4$. Then, by virtue of Lemma~\ref{L8}, we have
$$f_\delta\left(\dfrac{mn}{x}\right) = \dfrac{1}{2\pi}\int_{-T}^T F_\delta(it)\left(\dfrac{mn}{x}\right)^{-it} dt + O\left(\dfrac{1}{\delta T}\right).$$
Substituting this expression into \eqref{QErr2}, we see that the contribution of the term $O\left(1/(\delta T)\right)$ to the sum $Q^{Err_2}$ does not exceed (in order)
\begin{multline*}\sum_{k\leqslant \LL^{1-\varepsilon}}{{\sum\limits_{\substack{D, N, M \\ NM\leqslant x}}}''}\sum_{d \sim D}\left(\dfrac{NM}{d\delta T}+\dfrac{1}{\delta T}+\dfrac{NM}{\varphi(d)\delta T}\right)\\
\ll (\ln \LL)\LL^{3-\varepsilon} \dfrac{x}{\delta T}+\dfrac{\sqrt{x}\LL^{3+A-\varepsilon}}{T}\ll_A \dfrac{x \ln \mathcal{L}}{\mathcal{L}^{A_3-A_2-3+\varepsilon}}\ll \dfrac{x}{\mathcal{L}}.
\end{multline*}
Changing the order of integration and summation over $d$, we obtain
\begin{equation}\label{Qerr2}
Q^{Err_2} = \dfrac{1}{4\pi}\sum_{k<\LL^{1-\varepsilon}}\dfrac{1}{k}{\sum\limits_{\substack{D, N, M \\ x\LL^{-A_1}<NM\leqslant x}}}''\int_{-T}^{T}F_{\delta}(it)\tilde{U}(D, N, M, t)x^{it} dt + R_5,
\end{equation}
where 
\begin{equation}\label{Utilde}
\tilde{U}(D, N, M, t) = \sum_{d \sim D}\chi_4(d)\left(\sum\limits_{\substack{n \sim N \\ m\sim M \\ nm\equiv 1\!\!\!\!\!\pmod{d}}}a(m)b(n) - \dfrac{1}{\varphi(d)}\sum\limits_{\substack{n \sim N \\ m\sim M \\ (nm, d) = 1}}a(m)b(n)\right)\!,
\end{equation}
the values $a(m)$ and $b(n)$ are defined by the equalities
\begin{equation}\label{ab}
a(m) = \alpha(m)m^{-it}, b(n) = \beta(n)n^{-it},
\end{equation}
and
$$R_5 \ll_{\varepsilon, A, A_1, A_2, A_3}\dfrac{x\ln \LL}{\LL^{1-\frac{\varepsilon}{4}}}.$$

\section{The dispersion method of Fouvry and Radziwi\l\l}
Changing the order of summation, we have
$$\tilde{U}(D, N, M, t) = \sum\limits_{\substack{m\sim M }}a(m)\left(\sum_{d \sim D}\chi_4(d)\sum\limits_{\substack{n \sim N \\ nm\equiv 1\!\!\!\!\!\pmod{d}}}b(n) - \sum\limits_{\substack{d \sim D}}\dfrac{\chi_4(d)}{\varphi(d)}\sum\limits_{\substack{n \sim N \\ (nm,d) = 1}}b(n)\right)\!.$$
Squaring the moduli of both sides of the equality and applying the Cauchy~--~Schwarz inequality, we obtain
\begin{multline*}
|\tilde{U}(D, N, M, t)|^2 \leqslant \sum\limits_{\substack{m\sim M}}|a(m)|^2\\
\times \sum\limits_{\substack{m\sim M}}\left|\sum_{d \sim D}\chi_4(d)\sum\limits_{\substack{n \sim N \\ nm\equiv 1\!\!\!\!\!\pmod{d}}}b(n) - \sum_{d \sim D}\dfrac{\chi_4(d)}{\varphi(d)}\sum\limits_{\substack{n \sim N \\ (nm,d) = 1}}b(n)\right|^2\\
\leqslant \left\|a\right\|_2^2\sum\limits_{\substack{m = -\infty}}^{+\infty}\psi\left(\dfrac{m}{M}\right) \left|\sum_{d \sim D}\chi_4(d)\sum\limits_{\substack{n \sim N \\ nm\equiv 1\!\!\!\!\!\pmod{d}}}b(n) - \sum_{d \sim D}\dfrac{\chi_4(d)}{\varphi(d)}\sum\limits_{\substack{n \sim N \\ (nm,d) = 1}}b(n)\right|^2\!,
\end{multline*}
where $\left\|a\right\|_2$ denotes the $\ell_2$-norm of the sequence $a(m)$,
\begin{equation}\label{a(m)}
\left\|a\right\|_2 = \left( \sum\limits_{\substack{m\sim M}}|a(m)|^2\right)^{\frac{1}{2}}\ll \sqrt{M}.
\end{equation}
Using the identity
$$|s_1-s_2|^2 = |s_1|^2-2\Real s_1\bar{s}_2+|s_2|^2,\ \ (s_1, s_2 \in \mathbb{C}),$$
we get
\begin{equation}\label{Uest}
|\tilde{U}(D, N, M, t)|^2 \leqslant \left\|a\right\|_2^2\left(W - 2\Real V + U \right)\!, 
\end{equation}
where  $$W = \sum_{m=-\infty}^{+\infty}\psi\left(\dfrac{m}{M}\right)\sum_{d_1, d_2\sim D}\chi_4(d_1)\chi_4(d_2)\sum\limits_{\substack{n_1, n_2\sim N \\ n_1m\equiv 1\!\!\!\!\!\pmod {d_1} \\ n_2m\equiv 1\!\!\!\!\!\pmod {d_2}}}b(n_1)\overline{b}(n_2),$$
$$V= \sum_{m=-\infty}^{+\infty}\psi\left(\dfrac{m}{M}\right)\sum_{d_1, d_2\sim D}\dfrac{\chi_4(d_1)\chi_4(d_2)}{\varphi(d_2)}\sum\limits_{\substack{n_1, n_2\sim N\\ n_1m\equiv 1\!\!\!\!\!\pmod {d_1} \\ (n_2m, d_2)=1}}b(n_1)\overline{b}(n_2),$$
$$U= \sum_{m=-\infty}^{+\infty}\psi\left(\dfrac{m}{M}\right)\sum_{d_1, d_2\sim D}\dfrac{\chi_4(d_1)\chi_4(d_2)}{\varphi(d_1)\varphi(d_2)}\sum\limits_{\substack{n_1, n_2\sim N \\ (n_1m, d_1)=1 \\ (n_2m, d_2)=1}}b(n_1)\overline{b}(n_2).$$
In the remaining part of this section, we transform the sums $U$ and $V$. Changing the order of summation in $U$, we obtain
$$U = \sum_{d_1, d_2\sim D}\dfrac{\chi_4(d_1)\chi_4(d_2)}{\varphi(d_1)\varphi(d_2)}\sum\limits_{\substack{n_1, n_2\sim N\\ (n_1, d_1)=1 \\ (n_2, d_2)=1}}b(n_1)\overline{b}(n_2)\sum\limits_{\substack{m = -\infty \\ (m, d_1d_2) = 1}}^{+\infty}\psi\left(\dfrac{m}{M}\right).$$
We apply Lemma~\ref{L7} to the inner sum. To do this, let us verify the inequality
$M>\exp\left(\sqrt{2\ln(d_1 d_2)}\right)\!.$ Let $M\leqslant \exp\left(2\sqrt{\LL}\right)$; then
$$NM\leqslant \exp\left(2\sqrt{\LL}\right) x^{\varepsilon}< x\LL^{-A_1},$$
which is false for sufficiently large $x$. Therefore,
$$M>\exp\left(2\sqrt{\LL}\right)\geqslant \exp\left(\sqrt{2\ln(d_1 d_2)}\right)\!,$$
and the conditions of Lemma~\ref{L7} are satisfied. Using the obvious inequalities $$\tau(d_1 d_2)\leqslant \tau(d_1)\tau(d_2)$$ and $M\leqslant x$, we obtain
\begin{equation}\label{psi}\sum\limits_{\substack{m = -\infty \\ (m,  d_1d_2) = 1}}^{+\infty}\psi\left(\dfrac{m}{M}\right) = \dfrac{\varphi(d_1 d_2)}{d_1 d_2} M\hat{\psi}(0) + O\left(\tau(d_1)\tau(d_2)\LL^2\right)\!.
\end{equation}
Taking into account the rough estimate $$\sum_{d \sim D}\dfrac{\tau(d)}{\varphi(d)}\ll \LL^2,$$ we obtain that the contribution of the $O$-term in \eqref{psi} to the sum $U$ does not exceed
$$\LL^2\sum_{d_1, d_2}\dfrac{\tau(d_1)\tau(d_2)}{\varphi(d_1)\varphi(d_2)}\sum\limits_{\substack{n_1, n_2 \sim N \\ n_1, n_2\text{\ both prime}}} 1 \ll \left(\dfrac{N}{\ln N} \right)^2 \LL^6\ll N^2\LL^6.$$
Therefore,
$$U = M\hat{\psi}(0)\sum_{d_1, d_2\sim D}\dfrac{\chi_4(d_1)\chi_4(d_2)}{\varphi(d_1)\varphi(d_2)}\dfrac{\varphi(d_1 d_2)}{d_1 d_2}\sum\limits_{\substack{n_1, n_2\sim N\\ (n_1, d_1)=1 \\ (n_2, d_2)=1}}b(n_1)\overline{b}(n_2) + O\left(N^2\LL^6\right).$$
Put $\Delta = (d_1, d_2),\ d_1 = \Delta k_1,\ d_2 = \Delta k_2,\ (k_1, k_2) = 1,$ then 
$$\varphi(d_1 d_2) = \dfrac{\varphi(d_1)\varphi(d_2)\Delta}{\varphi(\Delta)}.$$Since $\Delta \leqslant d_1\leqslant D_1$, where $D_1\leqslant 2D$, we have
\begin{multline}\label{U}
U = M\hat{\psi}(0)\sum\limits_{\substack{\Delta \leqslant D_1 \\ \Delta \text{\, odd}}}\dfrac{\Delta}{\varphi(\Delta)}\sum\limits_{\substack{d_1, d_2\sim D \\ (d_1, d_2) = \Delta}}\dfrac{\chi_4(d_1)\chi_4(d_2)}{d_1 d_2}\\
\times\sum\limits_{\substack{n_1, n_2\sim N\\ (n_1, d_1)=1 \\ (n_2, d_2)=1}}b(n_1)\overline{b}(n_2) + O\left(N^2\LL^6\right)\\
= U^{MT} + O\left(N^2\LL^6\right)\!,
\end{multline}
where
\begin{multline}\label{U^MT}
U^{MT} = M\hat{\psi}(0)\sum\limits_{\substack{\Delta \leqslant D_1 \\ \Delta \text{\, odd}}}\dfrac{1}{\Delta \varphi(\Delta)}\sum\limits_{\substack{k_1, k_2\sim \frac{D}{\Delta} \\ (k_1, k_2) = 1}}\dfrac{\chi_4(k_1)\chi_4(k_2)}{k_1 k_2}\\
\times\sum\limits_{\substack{n_1 \sim N \\ (n_1, \Delta k_1) = 1}}b(n_1)\sum\limits_{\substack{n_2 \sim N \\ (n_2, \Delta k_2) = 1}}\overline{b}(n_2).
\end{multline}
Now let us transform the sum $V$. Changing the order of summation, we obtain
\begin{equation}\label{V}
V = \sum_{d_1, d_2\sim D}\dfrac{\chi_4(d_1)\chi_4(d_2)}{\varphi(d_2)}\sum\limits_{\substack{n_1, n_2\sim N\\ (n_1, d_1)=1 \\ (n_2, d_2)=1}}b(n_1)\overline{b}(n_2)\sum\limits_{\substack{m = -\infty \\ mn_1 \equiv 1\!\!\!\!\!\pmod{d_1} \\ (m, d_2) = 1}}^{+\infty}\psi\left(\dfrac{m}{M}\right).
\end{equation}
Denoting the inner sum in \eqref{V} by $\vartheta$ and then using the M\"obius function summation property, we obtain
\begin{equation*}
\vartheta = \sum\limits_{\substack{m = -\infty \\ mn_1 \equiv 1\!\!\!\!\!\pmod{d_1} \\ (m, d_2) = 1}}^{+\infty}\psi\left(\dfrac{m}{M}\right) 
= \sum_{d|d_2}\mu(d) \sum\limits_{\substack{m = -\infty \\ mn_1 \equiv 1\!\!\!\!\!\pmod{d_1} \\ m \equiv 0\!\!\!\!\!\pmod{d}}}^{+\infty}\psi\left(\dfrac{m}{M}\right).
\end{equation*}
Note that $(d, d_1) = 1$, then
\begin{multline}\label{theta}
\vartheta = \sum\limits_{\substack{d | d_2 \\ (d, d_1) = 1}}\mu(d)\sum\limits_{\substack{m = -\infty \\ mn_1 \equiv 1\!\!\!\!\!\pmod{d_1} \\ m \equiv 0\!\!\!\!\!\pmod{d}}}^{+\infty}\psi\left(\dfrac{m}{M}\right)\\
=\sum\limits_{\substack{d | d_2 \\ (d, d_1) = 1}}\mu(d)\sum\limits_{\substack{\ell = -\infty \\ \ell \equiv (n_1 d)^{-1}\!\!\!\!\!\pmod{d_1} }}^{+\infty}\psi\left(\dfrac{\ell}{M/d}\right).
\end{multline}
Let us show that $M/d > \exp\left(\sqrt{2\ln d_1}\right)$. Indeed, if
$$
M \leqslant \sqrt{x}\LL^A \exp(\sqrt{2\LL}),
$$
then for sufficiently large $x$ we have
$$
MN \leqslant \sqrt{x}\LL^A \exp(\sqrt{2\LL}) x^\varepsilon < x\LL^{-A_1},
$$
which is false. Therefore,
$$
M > \sqrt{x}\LL^A \exp\left(\sqrt{2\LL}\right) \geqslant d\,\exp\left(\sqrt{2\ln d_1}\right).
$$
Now set
$$
H = \frac{5dd_1}{M} \left( \ln \frac{M}{d} \right)^4,
$$
then, by Lemma~\ref{L7}, we have
\begin{multline*}\sum\limits_{\substack{\ell = -\infty \\ \ell \equiv (n_1 d)^{-1}\!\!\!\!\!\pmod{d_1} }}^{+\infty}\psi\left(\dfrac{\ell}{M/d}\right) = \dfrac{M}{d d_1}\hat{\psi}(0)\\
+ \dfrac{M}{dd_1}\sum_{1\leqslant |m|\leqslant H}\hat{\psi}\left(\dfrac{mM}{d d_1}\right)e^{2\pi i\frac{m(dn_1)^*_{d_1}}{d_1}} + O\left(\dfrac{d}{d_1 M}\right)\\
= \dfrac{M}{d d_1}\hat{\psi}(0) + O\left(\dfrac{MH}{d d_1}\right) + O(1)
= \dfrac{M}{d d_1}\hat{\psi}(0) + O\left(\LL^4\right)\!.
\end{multline*}
Therefore, using \eqref{theta}, we find that
\begin{equation*}
\vartheta = \sum\limits_{\substack{d | d_2 \\ (d, d_1) = 1}}\mu(d)\left(\dfrac{M}{d d_1}\hat{\psi}(0) + O\left(\LL^4\right)\right)
=\dfrac{M}{d_1}\hat{\psi}(0)\prod\limits_{\substack{p | d_2 \\ p\nmid d_1}}\left(1-\dfrac{1}{p}\right) + O\left(\tau(d_2)\LL^4\right)\!.
\end{equation*}
Since
$$\prod\limits_{\substack{p | d_2 \\ p\nmid d_1}}\left(1-\dfrac{1}{p}\right) = \prod\limits_{\substack{p | \Delta k_2 \\ p\nmid \Delta k_1}}\left(1-\dfrac{1}{p}\right) = \prod\limits_{\substack{p | k_2}}\left(1-\dfrac{1}{p}\right) =\dfrac{\varphi(k_2)}{k_2},$$
it follows that
\begin{equation}
\label{theta1}
\vartheta = \dfrac{M}{\Delta k_1}\dfrac{\varphi(k_2)}{k_2}\hat{\psi}(0) + O\left( \tau(\Delta)\tau(k_2)\LL^4\right).
\end{equation}
The contribution of the $O$-term to the sum \eqref{V} does not exceed
$$\LL^4\sum_{\Delta \leqslant D_1}\dfrac{\tau(\Delta)}{\varphi(\Delta)}\sum_{k_1, k_2 \sim \frac{D}{\Delta}}\dfrac{\tau(k_2)}{\varphi(k_2)}\left(\dfrac{N}{\ln N}\right)^2\ll \LL^6 N^2 D.$$
The contribution of the main term \eqref{theta1} to the sum \eqref{V} is
\begin{multline*}
M\hat{\psi}(0)\sum\limits_{\substack{\Delta \leqslant D_1 \\ \Delta \text{\, odd}}}\dfrac{1}{ \varphi(\Delta)}\sum\limits_{\substack{k_1, k_2\sim \frac{D}{\Delta} \\ (k_1, k_2) = 1}}\dfrac{\chi_4(k_1)\chi_4(k_2)}{\varphi(k_2)}\\
\times\sum\limits_{\substack{n_1 \sim N \\ (n_1, \Delta k_1) = 1}}b(n_1)\sum\limits_{\substack{n_2 \sim N \\ (n_2, \Delta k_2) = 1}}\overline{b}(n_2)\dfrac{\varphi(k_2)}{\Delta k_1 k_2} = U^{MT}. 
\end{multline*}
Thus, 
$$V = U^{MT} + O(N^2 \LL^6 D).$$
Since $k_1$ and $k_2$ enter the sum $U^{MT}$ symmetrically, we have $U^{MT} = \overline{U^{MT}}$, and therefore $U^{MT}$ is a real number.

From this, we find that
\begin{equation}\label{W - 2Re U + V}
W- 2\Real V + U = W - U^{MT} + O\left(N^2 \LL^6 D\right)\!.
\end{equation}
\section{Contribution to $W$ from large $\Delta$, $\Delta_1$, and $\Delta_2$}
We have
$$W = \sum_{d_1, d_2\sim D}\chi_4(d_1)\chi_4(d_2)\sum\limits_{\substack{n_1, n_2\sim N \\ (n_1, d_1) = 1\\ \ (n_2, d_2) = 1 }}b(n_1)\overline{b}(n_2)\sum\limits_{\substack{m=-\infty \\ mn_1\equiv 1\!\!\!\!\!\pmod{d_1} \\ mn_1\equiv 1\!\!\!\!\!\pmod{d_2}}}^{+\infty}\psi\left(\dfrac{m}{M}\right)\!.$$
Below we will use the fact that for each pair $d_1$ and $d_2$ in the sum $W$, there is a unique representation of the form
\begin{equation}\label{ddk} d_1 = \Delta k_1 = \Delta \Delta_1 \kappa_1,\ \ d_2 = \Delta k_2 = \Delta\Delta_2\kappa_2,
\end{equation}
where
$$\Delta = (d_1, d_2),\ \ \Delta_1, \Delta_2 | \Delta^\infty,$$
$$k_1 = \dfrac{d_1}{\Delta},\ \ k_2 = \dfrac{d_2}{\Delta},\ \ (k_1, k_2) = 1,$$
$$(\kappa_1\Delta_1, \kappa_2\Delta_2) = 1, \ \ (\kappa_1, \Delta) = (\kappa_2, \Delta) = 1.$$
Let $X$ be a certain value such that $\mathcal{L}^{10}<X\leqslant \sqrt{N}.$
Denote by $R_k$, where $1 \leqslant k \leqslant 4$, the contribution of those terms in $W$ for which the following conditions hold, respectively
\begin{enumerate}
\item $\Delta = (d_1, d_2) > X;$
\item $\Delta_1 > X;$
\item $\Delta_2 > X;$
\item $n_1 = n_2 = p > X.$
\end{enumerate}

First, let us estimate $R_1$. We have %Положим $k_1 = d_1/\Delta$ и $k_2 = d_2/\Delta$, тогда
\begin{equation*}
|R_1| \leqslant \sum_{X<\Delta\leqslant 2D}\sum\limits_{\substack{k_1, k_2 \sim \frac{D}{\Delta} \\ (k_1, k_2) = 1}}\sum\limits_{\substack{p_1,\, p_2 \sim N \\ p_i\nmid k_i\Delta,\, i=1, 2\\ p_1\equiv p_2\!\!\!\!\! \pmod{\Delta}}}\sum\limits_{\substack{m=-\infty \\ mp_1\equiv 1\!\!\!\!\!\pmod{k_1\Delta} \\ mp_2\equiv 1\!\!\!\!\!\pmod{k_2\Delta} }}^{+\infty}\psi\left(\dfrac{m}{M}\right).
\end{equation*}
The inner sum in this case does not exceed
\begin{equation*}
%\sum\limits_{\substack{\frac{M}{2}\leqslant m\leqslant\frac{5M}{2} \\ mp_1\equiv 1\!\!\!\!\!\pmod{k_1\Delta} \\ mp_2\equiv 1\!\!\!\!\!\pmod{k_2\Delta}}}1\leqslant 
\sum\limits_{\substack{\frac{M}{2}\leqslant
m\leqslant\frac{5M}{2} \\
mp_1\equiv 1\!\!\!\!\!\pmod{k_1\Delta} \\mp_2-1\equiv 0\!\!\!\!\!\pmod{k_2}}} 1.
\end{equation*}
From this, taking the summation over $k_2$ as the inner sum, we obtain
\begin{multline*}|R_1|\leqslant %\sum_{X<\Delta\leqslant 2D}\sum\limits_{\substack{\frac{D}{\Delta}<k_1\leqslant \frac{2D}{\Delta}}}\sum\limits_{\substack{p_1,\, p_2 \sim N \\p_1\equiv p_2\!\!\!\!\!\pmod{\Delta} \\ (p_1,\Delta k_1) = (p_2, \Delta k_2) = 1}}\sum\limits_{\substack{\frac{M}{2}\leqslant m\leqslant\frac{5M}{2} \\ mp_1\equiv 1\!\!\!\!\!\pmod{k_1\Delta}}}\sum\limits_{\substack{\frac{D}{\Delta}<k_2\leqslant \frac{2D}{\Delta} \\ k_2|mp_2-1}}1\\
\sum_{X<\Delta\leqslant 2D}\sum\limits_{\substack{k_1\sim \frac{D}{\Delta}}}\sum\limits_{\substack{p_1,\, p_2 \sim N \\p_1\equiv\, p_2\!\!\!\!\!\!\pmod{\Delta} \\ p_1\nmid k_1\Delta,\ p_2\nmid \Delta}}\sum\limits_{\substack{ m\leqslant\frac{5M}{2} \\ m\,\equiv\,\overline{p}_1\!\!\!\!\!\!\pmod{k_1\Delta}}}\tau(mp_2-1)\\
\leqslant \sum_{X<\Delta\leqslant 2D}\sum\limits_{\substack{k_1\sim \frac{D}{\Delta}}}\sum\limits_{\substack{p_1,\, p_2 \sim N \\p_1\equiv\, p_2\!\!\!\!\!\pmod{\Delta} \\ p_1\nmid k_1\Delta,\ p_2\nmid \Delta}}\sum\limits_{\substack{ n\leqslant 5MN \\ n\,\equiv -1\!\!\!\!\!\pmod{p_2} \\ n\,\equiv\,p_2 \overline{p}_1-1\!\!\!\!\!\pmod{k_1\Delta}}}\tau(n)
= R_{1, 1} + R_{1, 2},
\end{multline*}
where $\overline{a}$ for $(a, q) = 1$, as well as the symbol $a_q^*$, denotes the inverse residue of $a$ modulo $q$,
and $R_{1,1}$ and $R_{1,2}$ denote the contributions of those terms for which $X < \Delta \leqslant 2N$ and $2N < \Delta \leqslant 2D$, respectively.

Consider the sum $R_{1,1}$. Denote by $R_{1,1}^{(1)}$ the contribution of those $p_2$ which do not divide $k_1$, and by $R_{1,1}^{(2)}$ the contribution of the remaining $p_2$. Let
$$
a = p_2 (p_1)^*_{k_1 \Delta} - 1, \quad q = p_2 k_1 \Delta,
$$
and $\delta = (a, q)$. First, we estimate the sum $R_{1,1}^{(1)}$. Using the Chinese remainder theorem, we find
$$T = \sum\limits_{\substack{ n\leqslant 5MN \\ n\,\equiv -1\!\!\!\!\!\pmod{p_2} \\ n\,\equiv\,p_2 \overline{p}_1-1\!\!\!\!\!\pmod{k_1\Delta}}}\tau(n) = \sum\limits_{\substack{n \leqslant 5MN \\ n\equiv a\!\!\!\!\!\pmod{q}}}\tau(n)\leqslant \tau(\delta)\sum\limits_{\substack{\ell \leqslant \frac{5MN}{\delta} \\ \ell\equiv \frac{a}{\delta}\!\!\!\!\!\pmod{\frac{q}{\delta}}}}\tau(\ell).$$
Using the inequality
$$\dfrac{2ND}{\delta^{\frac{1}{4}}}\leqslant x^{\frac{1}{2}+2\varepsilon}\leqslant (MN)^{\frac{3}{4}},$$
which holds for sufficiently large $x$, we get
$$\dfrac{q}{\delta}\leqslant \left( \dfrac{5MN}{\delta}\right)^{\frac{3}{4}}\!\!.$$
From this, by Shiu's Theorem,
$$T \ll \dfrac{\tau(\delta)}{\varphi(\frac{q}{\delta})}\dfrac{MN}{\delta\mathcal{L}}\exp\left(2\sum_{p\leqslant x}\dfrac{1}{p} \right)\ll \dfrac{\tau(\delta) MN\mathcal{L}}{\varphi(\frac{q}{\delta})\delta}. $$

Using the inequalities $\varphi(q) \leqslant \delta \varphi(q/\delta)$ and $\tau(\delta) \leqslant \tau(q)$, as well as the fact that for any $a$ and $b$ the following hold:
$$
\tau(ab) \leqslant \tau(a) \tau(b), \quad \varphi(ab) \geqslant \varphi(a) \varphi(b),
$$
we obtain
$$T\ll \dfrac{\tau(q)MN\mathcal{L}}{\varphi(q)}\ll \dfrac{\tau(k_1)\tau(\Delta)MN\mathcal{L}}{\varphi(k_1)\varphi(\Delta)(p_2-1)} \ll \dfrac{\tau(k_1)\tau(\Delta)M\mathcal{L}}{\varphi(k_1)\varphi(\Delta)}.$$
Note that for $\Delta\leqslant 2N$, we have
$$\sum\limits_{\substack{p_1,\, p_2 \sim N \\p_1\equiv\, p_2\!\!\!\!\!\pmod{\Delta}}} 1 \ll N\left( \dfrac{N}{\Delta}+1\right)\ll \dfrac{N^2}{\Delta}.$$
Hence,
\begin{multline*}
R_{1,1}^{(1)}\ll M\mathcal{L} \sum_{X<\Delta\leqslant 2N}\dfrac{\tau(\Delta)}{\varphi(\Delta)}\sum\limits_{\substack{k_1\sim  \frac{D}{\Delta}}}\dfrac{\tau(k_1)}{\varphi(k_1)}\sum\limits_{\substack{p_1,\, p_2 \sim N \\p_1\equiv\, p_2\!\!\!\!\!\pmod{\Delta}}} 1\\
\ll MN^2\mathcal{L}\sum_{X<\Delta\leqslant 2N}\dfrac{\tau(\Delta)}{\Delta\varphi(\Delta)}\sum\limits_{\substack{k_1\leqslant \frac{2D}{\Delta}}}\dfrac{\tau(k_1)}{\varphi(k_1)}\\
\ll MN^2\mathcal{L}^3\sum_{X<\Delta\leqslant 2N}\dfrac{\tau(\Delta)}{\Delta\varphi(\Delta)}. %\dfrac{MN^2\mathcal{L}^5}{X}.
\end{multline*}
Since
\begin{multline*}\sum_{\Delta > X}\dfrac{\tau(\Delta)}{\Delta\varphi(\Delta)} =\sum_{\nu\geqslant 0}\sum_{2^\nu X< \Delta\leqslant  2^{\nu+1}X}\dfrac{\tau(\Delta)}{\Delta\varphi(\Delta)}\\
\ll \sum_{\nu\geqslant 0}\dfrac{1}{2^\nu X}\sum_{\Delta\leqslant 2^{\nu+1}X}\dfrac{\tau(\Delta)}{\varphi(\Delta)}
\ll\sum_{\nu\geqslant 0}\dfrac{(\ln(2^{\nu+1}X))^2}{2^\nu X}\ll \dfrac{\mathcal{L}^2}{X},
\end{multline*}
we get
$$R_{1, 1}^{(1)} \ll \dfrac{MN^2\mathcal{L}^5}{X}.$$
Now let us estimate $R_{1,1}^{(2)}$. Proceeding as above and using the condition $p_2 \mid k$, we obtain
\begin{equation*}
\sum\limits_{\substack{ n\leqslant 5MN \\ n\,\equiv -1\!\!\!\!\!\pmod{p_2} \\ n\,\equiv\,p_2 \overline{p}_1-1\!\!\!\!\!\pmod{k_1\Delta}}}\tau(n) = \sum\limits_{\substack{n \leqslant 5MN \\ n\equiv p_2 \overline{p}_1-1\!\!\!\!\!\pmod{k_1\Delta}}}\tau(n)\\
\ll \dfrac{\tau(k_1)\tau(\Delta)MN\mathcal{L}}{\varphi(k_1)\varphi(\Delta)}.
\end{equation*}

Therefore, we have
\begin{multline*}
R_{1,1}^{(2)}\ll MN\mathcal{L} \sum_{X<\Delta\leqslant 2N}\dfrac{\tau(\Delta)}{\varphi(\Delta)}\sum\limits_{\substack{k_1\sim  \frac{D}{\Delta}}}\dfrac{\tau(k_1)}{\varphi(k_1)}\sum\limits_{\substack{p_1,\, p_2 \sim N \\p_1\equiv\, p_2\!\!\!\!\!\pmod{\Delta} \\ p_2|k_1}} 1\\
\ll MN\mathcal{L}\sum_{X<\Delta\leqslant 2N}\dfrac{\tau(\Delta)}{\varphi(\Delta)}\sum_{p_2\sim N}\sum\limits_{\substack{k_1\leqslant \frac{2D}{\Delta} \\ k_1\equiv 0\!\!\!\!\!\pmod{p_2}}}\dfrac{\tau(k_1)}{\varphi(k_1)}\left(\dfrac{N}{\Delta} + 1\right)\\
\ll MN^2\mathcal{L}\sum_{X<\Delta\leqslant 2N}\dfrac{\tau(\Delta)}{\Delta\varphi(\Delta)}\sum_{p_2\sim N}\dfrac{1}{p_2-1}\sum_{\ell\leqslant \frac{2D}{p_2\Delta}}\dfrac{\tau(\ell)}{\varphi(\ell)}\\
\ll   MN^2\mathcal{L}^3\sum_{\Delta>X}\dfrac{\tau(\Delta)}{\Delta\varphi(\Delta)}\ll\dfrac{MN^2\mathcal{L}^5}{X}.
\end{multline*}
Thus,
$$R_{1, 1} = R_{1, 1}^{(1)}+R_{1, 1}^{(2)} \ll \dfrac{MN^2\mathcal{L}^5}{X}.$$
Now consider the sum $R_{1,2}$. Note that the conditions $\Delta > 2N$ and
$$
p_1 \equiv p_2 \pmod{\Delta}
$$
imply the equality $p_1 = p_2$. Indeed, without loss of generality, assuming $p_1 \geqslant p_2$, for some integer $s \geqslant 0$ we have
$$
2N \geqslant p_1 = p_2 + \Delta s \geqslant N + 2Ns,
$$
which is possible only if $s = 0$ and $p_1 = p_2$. From this we obtain
\begin{multline*}
R_{1, 2} \leqslant \sum_{2N<\Delta\leqslant 2D}\sum_{k_1\sim\frac{D}{\Delta}}\sum_{p\sim N}\sum\limits_{\substack{m\leqslant \frac{5}{2}M \\ mp-1\equiv 0\!\!\!\!\!\pmod{\Delta k_1}}}\tau(mp-1)\\
\leqslant \sum_{2N<\Delta\leqslant 2D}\sum_{p\sim N}\sum\limits_{\substack{m\leqslant \frac{5}{2}M \\ mp-1\equiv 0\!\!\!\!\!\pmod{\Delta} }}\tau(mp-1)\sum\limits_{\substack{k_1\sim\frac{D}{\Delta} \\  k_1|mp-1}} 1 \\
\leqslant  \sum_{2N<\Delta\leqslant 2D}\sum_{p\sim N}\sum\limits_{\substack{m\leqslant \frac{5}{2}M \\ mp-1\equiv 0\!\!\!\!\!\pmod{\Delta}}}\tau^2(mp-1)\\
\leqslant \sum_{p\sim N}\sum\limits_{\substack{m\leqslant \frac{5}{2}M}}\tau^3(mp-1)\leqslant \sum_{n\leqslant 5MN}\tau^3(n-1)\tau(n).
\end{multline*}
Applying the Cauchy~--~Schwarz inequality to the last sum, as well as the estimate (see \cite{Mardzh})
\begin{equation}\label{Mardzh}
\sum_{n\leqslant x}\tau^k(n)\ll_k x(\ln x)^{2^{k}-1},
\end{equation}
valid for any fixed $k$, we obtain
\begin{equation*}R_{1, 2}\leqslant \sqrt{\sum_{n\leqslant 5MN}\tau^6(n)} \sqrt{\sum_{n\leqslant 5MN}\tau^2(n)}
\ll \sqrt{MN \mathcal{L}^{63}}\cdot\sqrt{MN \mathcal{L}^{3}} =  MN \mathcal{L}^{33}.
\end{equation*}
Thus, we have
\begin{equation}\label{R1}
|R_1|\leqslant R_{1, 1}+ R_{1, 2}\ll \dfrac{MN^2\mathcal{L}^5}{X} + MN\mathcal{L}^{33} = MN^2\left(\dfrac{\mathcal{L}^5}{X} + \dfrac{\mathcal{L}^{33}}{N}\right).
\end{equation}

Let us proceed to estimate the sum $R_2$ (the sum $R_3$ can be estimated similarly). Without loss of generality, we may assume that in the sum $R_2$ the condition $\Delta = (d_1, d_2) \leqslant X$ holds.

We have
\begin{equation*}
R_2 = \sum\limits_{\substack{d_1, d_2\sim D \\ \Delta = (d_1, d_2)\leqslant X\\
d_1 = \Delta\Delta_1\kappa_1\\ \Delta_1|\Delta^\infty,\ \Delta_1>X\\ (\kappa_1, \Delta) = 1}} \sum\limits_{\substack{n_1, n_2 \sim N \\ n_1\equiv n_2\!\!\!\!\!\pmod{\Delta} \\ (n_i, d_i) = 1,\ i = 1, 2 }}b(n_1)\bar{b}(n_2)\sum\limits_{\substack{m=-\infty \\ mn_1\equiv 1\!\!\!\!\!\pmod{d_1} \\  mn_2\equiv 1\!\!\!\!\!\pmod{d_2} }}^{+\infty}\psi\left(\dfrac{m}{M}\right).
\end{equation*}
From here, we get
\begin{multline}\label{eq}
|R_2|\leqslant \sum_{\Delta\leqslant X}\sum\limits_{\substack{\Delta_1 | \Delta^\infty \\ \Delta_1>X}}\sum\limits_{\substack{\kappa_1\sim \frac{D}{\Delta\Delta_1} \\ (\kappa_1, \Delta) = 1}}\sum\limits_{\substack{p_1, p_2\sim N \\ p_1\equiv p_2\!\!\!\!\!\pmod{\Delta} }}\sum\limits_{\substack{m = -\infty \\ mp_1\equiv 1\!\!\!\!\!\pmod{\Delta\Delta_1\kappa_1}}}^{+\infty}\psi\left(\dfrac{m}{M}\right)\sum\limits_{\substack{k_2\sim \frac{D}{\Delta} \\ \Delta k_2|mp_2-1}} 1\\
\leqslant \sum_{\Delta\leqslant X}\sum\limits_{\substack{\Delta_1 | \Delta^\infty \\ X<\Delta_1\leqslant \frac{2D}{\Delta}}}\sum\limits_{\substack{\kappa_1\sim \frac{D}{\Delta\Delta_1} \\ (\kappa_1, \Delta) = 1}}\sum\limits_{\substack{p_1, p_2\sim N \\ p_1\equiv p_2\!\!\!\!\!\pmod{\Delta} \\ p_2\nmid \Delta}}\sum\limits_{\substack{\frac{M}{2}<m\leqslant \frac{5}{2}M \\ mp_1\equiv 1\!\!\!\!\!\pmod{\Delta\Delta_1\kappa_1}}}\tau(mp_2-1).
\end{multline}
%Положим $n = mp_2-1,$ тогда $n\equiv -1\pmod{p_2}$ и
%$$n\equiv p_2\overline{p}_1-1\pmod{\Delta\Delta_1\kappa_1}.$$
%Отсюда $n\equiv a \pmod{q},$
%где $a = p_2({p}_1)^*_{\Delta\Delta_1\kappa_1}-1$ и $q = [p_2, \Delta\Delta_1\kappa_1].$
Proceeding as above, we obtain that the sum over $m$ in \eqref{eq} does not exceed, in order,
$$\dfrac{\tau(\Delta)\tau(\Delta_1)\tau(\kappa_1)}{\varphi(\Delta)\varphi(\Delta_1)\varphi([\kappa_1, p_2])}MN\mathcal{L}.$$

Separating the contribution to $R_2$ of terms satisfying the conditions $p_2 \nmid \kappa_1$ and $p_2 \mid \kappa_1$, we obtain
$$
|R_2| \ll R_2^{(1)} + R_2^{(2)},
$$
where
$$R_2^{(1)} = \sum_{\Delta\leqslant X}\dfrac{\tau(\Delta)}{\varphi(\Delta)}\sum\limits_{\substack{\Delta_1 | \Delta^\infty \\ X<\Delta_1\leqslant \frac{2D}{\Delta}}}\dfrac{\tau(\Delta_1)}{\varphi(\Delta_1)}\sum\limits_{\substack{\kappa_1\sim \frac{D}{\Delta\Delta_1} \\ (\kappa_1, \Delta) = 1}}\dfrac{\tau(\kappa_1)}{\varphi(\kappa_1)}\sum\limits_{\substack{p_1, p_2\sim N \\ p_1\equiv p_2\!\!\!\!\!\pmod{\Delta} \\ p_2\nmid \kappa_1}}\dfrac{MN\mathcal{L}}{\varphi(p_2)}$$
and
$$R_2^{(2)} = \sum_{p_2\sim N}\sum_{\Delta\leqslant X}\dfrac{\tau(\Delta)}{\varphi(\Delta)}\sum\limits_{\substack{\Delta_1 | \Delta^\infty \\ X<\Delta_1\leqslant \frac{2D}{\Delta}}}\dfrac{\tau(\Delta_1)}{\varphi(\Delta_1)}\sum\limits_{\substack{\kappa_1\sim \frac{D}{\Delta\Delta_1} \\ \kappa_1\equiv 0 \!\!\!\!\!\pmod{p_2}}}\dfrac{\tau(\kappa_1)}{\varphi(\kappa_1)}\sum\limits_{\substack{p_1\sim N \\ p_1\equiv p_2\!\!\!\!\!\pmod{\Delta}}}{MN\mathcal{L}}.$$
The sums $R_2^{(1)}$ and $R_2^{(2)}$ are estimated in exactly the same way. Let us estimate, for example, $R_2^{(1)}$. Since $\Delta \leqslant X < N$, we have
\begin{multline*}
R_2^{(1)}\ll MN\mathcal{L}\sum_{\Delta\leqslant X}\dfrac{\tau(\Delta)}{\varphi(\Delta)}\sum\limits_{\substack{\Delta_1 | \Delta^\infty \\ X<\Delta_1\leqslant \frac{2D}{\Delta}}}\dfrac{\tau(\Delta_1)}{\varphi(\Delta_1)}\sum\limits_{\substack{\kappa_1\sim \frac{D}{\Delta\Delta_1} \\ (\kappa_1, \Delta) = 1}}\dfrac{\tau(\kappa_1)}{\varphi(\kappa_1)}\left( \dfrac{N}{\Delta}+1\right) \\
\ll MN^2\mathcal{L}^3\sum_{\Delta\leqslant X}\dfrac{\tau(\Delta)}{\Delta\varphi(\Delta)}\sum\limits_{\substack{\Delta_1 | \Delta^\infty \\ X<\Delta_1\leqslant \frac{2D}{\Delta}}}\dfrac{\tau(\Delta_1)}{\varphi(\Delta_1)}.
\end{multline*}
The sum over $\Delta_1$ does not exceed
\begin{multline*}
\sum\limits_{\substack{\Delta_1 | \Delta^\infty \\ \Delta_1>X}}\dfrac{\tau(\Delta_1)}{\varphi(\Delta_1)} \leqslant \dfrac{1}{\sqrt{X}}\sum_{\Delta_1|\Delta^\infty}\dfrac{\tau(\Delta_1)\sqrt{\Delta_1}}{\varphi(\Delta_1)} = \dfrac{1}{\sqrt{X}}\prod_{p|\Delta}\left(1 + \sum_{\nu = 1}^{+\infty}\dfrac{(\nu+1)\sqrt{p^\nu}}{p^\nu-p^{\nu-1}}\right)\\
=\dfrac{1}{\sqrt{X}}\prod_{p|\Delta}\left(1 + O\left( \dfrac{1}{\sqrt{p}}\right) \right)\ll \dfrac{\tau(\Delta)}{\sqrt{X}}.
\end{multline*}
Hence, we obtain
$$R_2^{(1)}\ll \dfrac{MN^2\mathcal{L}^3}{\sqrt{X}}\sum_{\Delta\leqslant X}\dfrac{\tau^2(\Delta)}{\Delta\varphi(\Delta)} \ll \dfrac{MN^2\mathcal{L}^3}{\sqrt{X}}.$$
Similarly, we find
$$R_2^{(2)} \ll \dfrac{MN^2\mathcal{L}^3}{\sqrt{X}}.$$
Thus, 
\begin{equation}\label{R23}|R_2|, |R_3|\ll R_2^{(1)} + R_2^{(2)}\ll  \dfrac{MN^2\mathcal{L}^3}{\sqrt{X}}.
\end{equation}
Finally, let us estimate the quantity $R_4$. Again, without loss of generality, assume that $\Delta \leqslant X$. We have
\begin{multline*}
|R_4|\leqslant \sum\limits_{\substack{d_1, d_2\sim D \\ \Delta = (d_1, d_2)\leqslant X}} \sum\limits_{\substack{p \sim N }}\sum\limits_{\substack{m=-\infty \\ mp\equiv 1\!\!\!\!\!\pmod{d_1} \\  mp\equiv 1\!\!\!\!\!\pmod{d_2} }}^{+\infty}\psi\left(\dfrac{m}{M}\right)\\
\leqslant \sum_{\Delta\leqslant X} \sum\limits_{\substack{k_1, k_2\sim \frac{D}{\Delta}\\ (k_1, k_2) = 1}}\sum\limits_{\substack{p \sim N }}\sum\limits_{\substack{\frac{M}{2}<m\leqslant \frac{5M}{2} \\ mp\equiv 1\!\!\!\!\!\pmod{\Delta k_1} \\  mp\equiv 1\!\!\!\!\!\pmod{\Delta k_2} }} 1
\leqslant \sum_{\Delta\leqslant X}\sum\limits_{\substack{p \sim N }}\sum_{m\leqslant \frac{5M}{2}}\tau^2(mp-1).
\end{multline*}
Introducing the notation $n = mp$ and applying the Cauchy~--~Schwarz inequality together with the estimate \eqref{Mardzh}, we obtain
\begin{equation}\label{R4}|R_4|\leqslant X\sum_{n\leqslant 5MN}\tau(n)\tau^{2}(n-1)\ll X MN\mathcal{L}^9.
\end{equation}
Combining the estimates \eqref{R1}, \eqref{R23}, and \eqref{R4}, we obtain
\begin{equation*}
|R_1|+ |R_2| + |R_3| + |R_4|
\ll MN^2\left( \dfrac{\mathcal{L}^5}{X} + \dfrac{\mathcal{L}^{33}}{N} +  \dfrac{\mathcal{L}^3}{\sqrt{X}} + \dfrac{X\mathcal{L}^9}{N} \right)\ll \dfrac{MN^2\mathcal{L}^{3}}{\sqrt{X}}.
\end{equation*}
Thus, for the sum $W$ we have 
\begin{equation}\label{W}
W = W(D; X) + O\left( \dfrac{MN^2\mathcal{L}^{3}}{\sqrt{X}}\right)\!,
\end{equation}
where $W(D; X)$ denotes the contribution of those terms in $W$ for which simultaneously $\Delta, \Delta_1, \Delta_2 \leqslant X$ and $n_1 \neq n_2$.
\section{Transformation of $W(D; X)$}
The values $n_1$, $n_2$, and $m$ involved in the sum $W$ satisfy the system of congruences
\begin{equation}\label{sysW}
\begin{cases*}
m\equiv \overline{n}_1\!\!\!\!\!\pmod{d_1}\\
m\equiv \overline{n}_2\!\!\!\!\!\pmod{d_2},
\end{cases*}
\end{equation}
which in turn is equivalent to the following system
$$
\begin{cases*}
m\equiv \overline{n}_1\!\!\!\!\!\pmod{\Delta\Delta_1}\\
m\equiv \overline{n}_2\!\!\!\!\!\pmod{\Delta\Delta_2}\\
m\equiv \overline{n}_1\!\!\!\!\!\pmod{\kappa_1}\\
m\equiv \overline{n}_2\!\!\!\!\!\pmod{\kappa_2}.
\end{cases*}
$$
Here the quantities $\Delta$, $\Delta_i$, and $\kappa_i$ are defined in \eqref{ddk}.  
From this, introducing the notation $\tau = \Delta \Delta_1 \Delta_2$, and using Lemma~\ref{L9} and the Chinese remainder theorem, we obtain that \eqref{sysW} is equivalent to the system
$$
\begin{cases*}
n_1\equiv n_2\!\!\!\!\!\pmod{\Delta}\\
m\equiv \nu \!\!\!\!\!\pmod{\tau\kappa_1\kappa_2},\\
\end{cases*}
$$
where
\begin{equation}\label{nu}
\nu = \lambda \kappa_1\kappa_2(\kappa_1\kappa_2)^*_{\tau} + \tau\kappa_2(n_1\tau\kappa_2)^*_{\kappa_1} + \tau\kappa_1(n_2\tau\kappa_1)^*_{\kappa_2},
\end{equation}
and $\lambda$ is defined in \eqref{lambda}.
Using the definition of $W(D; X)$, we obtain
\begin{multline}\label{W(D,X)}
W(D; X) = \!\!\!\!\!\sum\limits_{\substack{d_1, d_2\sim D \\ \Delta = (d_1, d_2) \leqslant X \\ d_j = \Delta\Delta_j\kappa_j,\ j=1, 2\\ \Delta_j |\Delta^\infty,\ (\kappa_j, \Delta) = 1,\ \Delta_j \leqslant X }}\!\!\!\!\!\chi_4(d_1 d_2)\\
\times\sum\limits_{\substack{n_1\neq n_2\\ n_1, n_2\sim N \\ n_1 \equiv n_2\!\!\!\!\!\pmod{\Delta} \\ (n_j, d_j) = 1,\ j=1, 2 }}b(n_1)\overline{b}(n_2)\sum\limits_{\substack{m=-\infty \\ m\equiv \nu\!\!\!\!\!\pmod{\tau\kappa_1\kappa_2}}}^{+\infty}\psi\left(\dfrac{m}{M}\right)\!.
\end{multline}
Set $q = \tau\kappa_1\kappa_2$ and 
\begin{equation}\label{HHH}
H = \left( 20D^2/M\right)(\ln M)^4.
\end{equation} Then
$$q  = \dfrac{d_1d_2}{\Delta}\leqslant 4D^2\ll x\mathcal{L}^{2A}.$$
Therefore, $H\geqslant (5q/M)(\ln M)^4$ and  $x\gg q\mathcal{L}^{-2A}$. From the last inequality, taking into account that $x$ is sufficiently large, as well as that $MN \geqslant x \mathcal{L}^{-A_1}$ and $q \geqslant \sqrt{x} \mathcal{L}^{-A}$, we obtain
$$M\geqslant x N^{-1}\mathcal{L}^{-A_1}> x^{1-2\varepsilon}\geqslant (q\mathcal{L}^{-2A})^{1-2\varepsilon}\geqslant q^{1-3\varepsilon}\geqslant e^{\sqrt{2\ln q}}.$$
Thus, the sum over $m$ in \eqref{W(D,X)} satisfies the conditions of Lemma \ref{L7}. Taking into account that $q = \tau \kappa_1 \kappa_2 = \Delta k_1 k_2$, we find
\begin{multline}\label{WWW}\sum\limits_{\substack{m=-\infty \\ m\equiv \nu\!\!\!\!\!\pmod{\tau\kappa_1\kappa_2}}}^{+\infty}\psi\left(\dfrac{m}{M}\right) = \dfrac{M\hat{\psi}(0)}{\Delta k_1 k_2}\\ + \dfrac{M}{\tau\kappa_1\kappa_2}\sum_{1\leqslant |h|\leqslant H}\hat{\psi}\left( \dfrac{hM}{\tau\kappa_1\kappa_2}\right)\exp\left( 2\pi i\dfrac{h\nu}{\tau\kappa_1\kappa_2}\right)  + O\left(\dfrac{1}{\Delta k_1 k_2 M}\right)\!.
\end{multline}
Denoting by $W^{MT}$, $W^{Err_1}$, and $W^{Err_2}$ the contributions to the sum $W(D; X)$ of the first, second, and third terms of the formula \eqref{WWW}, respectively, we obtain  
$$W^{MT} = M\hat{\psi}(0)\sum\limits_{\substack{\Delta\leqslant X \\ \Delta\text{\, odd}}}\dfrac{1}{\Delta}\!\!\!\!\!\sum\limits_{\substack{k_1, k_2\sim \frac{D}{\Delta} \\ (k_1, k_2) = 1 \\ k_j = \Delta_j \kappa_j,\ \Delta_j|\Delta^\infty,\ j=1, 2\\ \Delta_j\leqslant X,\ (\kappa_j, \Delta) = 1 }}\!\!\!\!\!\dfrac{\chi_4(k_1 k_2)}{k_1k_2}\!\!\!\!\!\sum\limits_{\substack{n_1, n_2\sim N \\ (n_j, \Delta k_j) = 1,\ j=1, 2\\ n_1 \equiv n_2\!\!\!\!\!\pmod{\Delta},\ n_1\neq n_2 }}\!\!\!\!\! b(n_1)\overline{b}(n_2),$$
\begin{multline}\label{Weer1}
W^{Err_1} = M  \sum\limits_{\substack{\Delta \leqslant X \\ \Delta \text{\, odd}}} \dfrac{1}{\Delta}\sum\limits_{\substack{\Delta_j\leqslant X\\ (\Delta_1, \Delta_2) = 1 \\ \Delta_j|\Delta^\infty,\ j=1, 2 }}\dfrac{\chi_4(\Delta_1\Delta_2)}{\Delta_1\Delta_2}\!\!\!\!\sum\limits_{\substack{\kappa_j \sim \frac{D}{\Delta\Delta_j} \\ (\kappa_1, \kappa_2) = 1 \\ (\kappa_j, \Delta) = 1,\  j=1,2}}\!\!\!\!\dfrac{\chi_4(\kappa_1\kappa_2)}{\kappa_1\kappa_2}\\
\times \sum_{1\leqslant |h|\leqslant H}\hat{\psi}\left(\dfrac{hM}{\tau\kappa_1\kappa_2}\right)\!\!\!\!\!\sum\limits_{\substack{n_1, n_2\sim N \\ (n_j, \Delta \kappa_j) = 1,\ j=1, 2\\ n_1 \equiv n_2\!\!\!\!\!\pmod{\Delta},\ n_1\neq n_2 }}\!\!\!\!\! b(n_1)\overline{b}(n_2)\exp\left(2\pi i \dfrac{h\nu}{\tau\kappa_1\kappa_2}\right)
\end{multline}
and
\begin{equation}\label{Werr2}
W^{Err_2} \ll \dfrac{1}{M}\sum_{\Delta\leqslant X}\dfrac{1}{\Delta}\sum_{k_1, k_2\sim \frac{D}{\Delta}}\dfrac{1}{k_1k_2}\sum_{p_1, p_2\sim N}1\ll \dfrac{N^2\ln X}{M}.
\end{equation}
In the remaining part of the paragraph, we transform the sum $W^{MT}$. We add to $W^{MT}$ the terms with $\Delta_1 > X$. The contribution of such terms does not exceed
\begin{multline*}M\sum_{\Delta\leqslant X}\dfrac{1}{\Delta}\sum\limits_{\substack{\Delta_1|\Delta^\infty \\ \Delta_1> X}}\dfrac{1}{\Delta_1}\sum\limits_{\substack{\kappa_1\sim \frac{D}{\Delta\Delta_1} \\ (\kappa_1, \Delta) = 1}}\dfrac{1}{\kappa_1}\sum_{k_2\sim \frac{D}{\Delta}}\dfrac{N^2}{k_2\Delta}\\
\ll\dfrac{MN^2}{\sqrt{X}}\sum_{\Delta\leqslant X}\dfrac{1}{\Delta^2}\prod_{p|\Delta}\left(1+\dfrac{1}{\sqrt{\Delta}}\right)\ll \dfrac{MN^2}{\sqrt{X}}. 
\end{multline*}
Similarly, the contribution of terms with the condition $\Delta_2 > X$ is estimated. We add to $W^{MT}$ the terms with the condition $n_1 = n_2$. The contribution of such terms does not exceed
$$M\sum_{\Delta\leqslant X}\dfrac{1}{\Delta}\sum_{k_1, k_2\sim \frac{D}{\Delta}}\dfrac{N}{k_1k_2}\ll MN\ln X.$$
Finally, we add to $W^{MT}$ the terms with the condition $X < \Delta \leqslant D_1$, where $D_1 \leqslant 2D$. The contribution of such terms does not exceed
\begin{equation*}
M\left( \sum_{X<\Delta\leqslant 2N} + \sum_{2N<\Delta\leqslant D_1}\right)\dfrac{1}{\Delta}\sum_{k_1, k_2 \sim \frac{D}{\Delta}}\dfrac{1}{k_1k_2}\sum\limits_{\substack{n_1, n_2\sim N \\ n_1 \equiv n_2\!\!\!\!\pmod{\Delta} }} 1\ll \dfrac{MN^2}{X} + MN\mathcal{L}.
\end{equation*}
Therefore, dropping the conditions $\Delta, \Delta_1, \Delta_2 \leqslant X$ and $n_1 \neq n_2$ in the sum $W^{MT}$ leads to an error of order
$$MN^2\left(\dfrac{1}{X} + \dfrac{\mathcal{L}}{N} + \dfrac{\ln X}{N} + \dfrac{1}{\sqrt{X}}\right)\ll \dfrac{MN^2}{\sqrt{X}}.$$
Thus, we have
\begin{multline}\label{W^MT}
W^{MT} =  M\hat{\psi}(0)\sum\limits_{\substack{\Delta \leqslant D_1 \\ \Delta \text{\, odd}}}\dfrac{1}{\Delta }\sum\limits_{\substack{k_1, k_2\sim \frac{D}{\Delta} \\ (k_1, k_2) = 1}}\dfrac{\chi_4(k_1)\chi_4(k_2)}{k_1 k_2}\!\!\!\!\!\sum\limits_{\substack{n_1, n_2\sim N \\ (n_j, \Delta k_j) = 1,\ j=1, 2\\ n_1 \equiv n_2\!\!\!\!\!\pmod{\Delta} }}\!\!\!\!\! b(n_1)\overline{b}(n_2)\\
+ O\left(\dfrac{MN^2}{\sqrt{X}}\right)
= M\hat{\psi}(0)\sum\limits_{\substack{\Delta \leqslant D_1 \\ \Delta \text{\, odd}}}\dfrac{1}{\Delta }\sum\limits_{\substack{k_1, k_2\sim \frac{D}{\Delta} \\ (k_1, k_2) = 1}}\dfrac{\chi_4(k_1)\chi_4(k_2)}{k_1 k_2}\\
\times\sum\limits_{\substack{\delta = 1 \\ (\delta, \Delta) = 1}}^\Delta \sum\limits_{\substack{n_1\sim N \\ n_1\equiv \delta\!\!\!\!\!\pmod{\Delta} \\ (n_1, k_1) = 1}}b(n_1) \sum\limits_{\substack{n_2\sim N \\ n_2\equiv \delta\!\!\!\!\!\pmod{\Delta} \\ (n_2, k_2) = 1}}\overline{b}(n_2) + O\left(\dfrac{MN^2}{\sqrt{X}}\right)\!.
\end{multline}
\section{Estimate of $W^{Err_1}$}
Dividing and multiplying the expression \eqref{Weer1} by $D^2$, and also changing the order of summation, we obtain
\begin{equation}
\label{werr}
W^{Err_1} = \dfrac{M}{D^2} \sum\limits_{\substack{\Delta \leqslant X \\ \Delta \text{\, odd}}} \dfrac{1}{\Delta}\sum\limits_{\substack{\Delta_j\leqslant X\\ (\Delta_1, \Delta_2) = 1 \\ \Delta_j|\Delta^\infty,\ j=1, 2 }}\dfrac{\chi_4(\Delta_1\Delta_2)}{\Delta_1\Delta_2}\cdot\mathcal{W}(\Delta, \Delta_1, \Delta_2),
\end{equation}
where 
$$\mathcal{W}(\Delta, \Delta_1, \Delta_2) = \!\!\!\!\!\sum\limits_{\substack{n_1, n_2\sim N \\ (n_j, \Delta) = 1,\ j=1, 2\\ n_1 \equiv n_2\!\!\!\!\!\pmod{\Delta},\ n_1\neq n_2 }}\!\!\!\!\! b(n_1)\overline{b}(n_2)\cdot \mathcal{S},$$
%\exp\left(2\pi i \dfrac{h\nu}{\tau\kappa_1\kappa_2}\right)
%the sum $\mathcal{S} = \mathcal{S}(n_1, n_2, \Delta, \Delta_1, \Delta_2)$ is 
%$$\mathcal{S} = \sum\limits_{\substack{\kappa_j \sim \frac{D}{\Delta\Delta_j} \\ (\kappa_1, \kappa_2) = 1 \\ (n_j, \kappa_j)=(\kappa_j, \Delta) = 1,\  j=1,2}}\!\!\!\!{\chi_4(\kappa_1)\chi_4(\kappa_2)}\sum_{0< |h|\leqslant H}f(h, \kappa_1, \kappa_2)\exp\left(2\pi i \dfrac{h\nu}{\tau\kappa_1\kappa_2}\right)\!,$$
%_________________________________________________________
the sum $\mathcal{S} = \mathcal{S}(n_1, n_2, \Delta, \Delta_1, \Delta_2)$ is defined by the equality
$$\mathcal{S} = \sum\limits_{\substack{\kappa_j \sim \frac{D}{\Delta\Delta_j} \\ (\kappa_1, \kappa_2) = 1 \\ (n_j, \kappa_j)=(\kappa_j, \Delta) = 1,\  j=1,2}}\!\!\!\!{\chi_4(\kappa_1)\chi_4(\kappa_2)}\sum_{0< |h|\leqslant H}f(h, \kappa_1, \kappa_2)\exp\left(2\pi i \dfrac{h\nu}{\tau\kappa_1\kappa_2}\right)\!,$$
where
$$f(h, \kappa_1, \kappa_2) = \hat{\psi}\left(\dfrac{hM}{\tau\kappa_1\kappa_2}\right)\dfrac{D^2}{\kappa_1\kappa_2}.$$
Denote by $\mathcal{W}^{+}$ and $W^{Err_1}_{>0}$ the contributions from $h > 0$ in the sums $\mathcal{W}$ and $W^{Err_1}$, respectively. The notations $\mathcal{W}^{-}$ and $W^{Err_1}_{<0}$ have analogous meanings. Let us estimate the sum $\mathcal{W}^+$ (the sum $\mathcal{W}^-$ can be estimated similarly).

For brevity, let us set
$$\mathcal{D}_{\vec{i}} = \mathcal{D}_{i_1, i_2, i_3} = \dfrac{\partial^{i_1 + i_2 + i_3}}{(\partial x_1)^{i_1}(\partial x_2)^{i_2}(\partial x_3)^{i_3}}.$$
We need the following estimate
\begin{equation}
	\label{Df}
	\mathcal{D}_{i_1, i_2, i_3}f(x_1, x_2, x_3) \ll \dfrac{X^4}{x_1^{i_1}x_2^{i_2}x_3^{i_3}},\ \ (0\leqslant i_1, i_2, i_3\leqslant 1),
\end{equation}
which holds for $x_2 \asymp \kappa_1$ and $x_3 \asymp \kappa_2$.
To prove it, we estimate all derivatives of the function $\hat{\psi}\left(\frac{a x_1}{x_2 x_3}\right)$, where $a = M/\tau$. %Для производных первого порядка имеем
%$$\dfrac{\partial}{\partial x_1}\left(  \hat{\psi}\left(\dfrac{ax_1}{x_2x_3}\right)\right) = F_1\left(\dfrac{ax_1}{x_2x_3} \right)\dfrac{1}{x_1},$$
%$$\dfrac{\partial}{\partial x_i}\left(  \hat{\psi}\left(\dfrac{ax_1}{x_2x_3}\right)\right) = - F_1\left(\dfrac{ax_1}{x_2x_3} \right)\dfrac{1}{x_i},\ \ (i = 2, 3),$$
%где $F_1(z) = z\hat{\psi}'(z)$.
%Для производных второго порядка находим
%$$\dfrac{\partial^2}{\partial x_1 \partial x_i}\left(  \hat{\psi}\left(\dfrac{ax_1}{x_2x_3}\right)\right) = -F_2\left(\dfrac{ax_1}{x_2x_3} \right)\dfrac{1}{x_1 x_i},\ \ (i>1),$$
%$$\dfrac{\partial^2}{\partial x_2 \partial x_3}\left(  \hat{\psi}\left(\dfrac{ax_1}{x_2x_3}\right)\right) = F_2\left(\dfrac{ax_1}{x_2x_3} \right)\dfrac{1}{x_2 x_3},$$
%где $F_2(z) = z\hat{\psi}'(z) + z^2\hat{\psi}''(z)$.
%Наконец, для производной третьего порядка получаем
%$$\dfrac{\partial^3}{ \partial x_1 \partial x_2 \partial x_3}\left(  \hat{\psi}\left(\dfrac{ax_1}{x_2x_3}\right)\right) = F_3\left(\dfrac{ax_1}{x_2x_3} \right)\dfrac{1}{x_1 x_2 x_3},$$
%где $F_3(z) = z\hat{\psi}'(z) + 3z^2\hat{\psi}''(z)+ z^3\hat{\psi}'''(z)$.\\
We use Lemma \ref{L10.5} for the case $g(z) = \hat{\psi}(z)$.  
By virtue of Lemma \ref{L5.5}, for the functions $F_1$, $F_2$, and $F_3$ defined in Lemma \ref{L10.5}, we have $F_1(z), F_2(z), F_3(z) \ll 1$. Therefore,
$$\mathcal{D}_{i_1, i_2, i_3}\hat{\psi}\left(\dfrac{ax_1}{x_2x_3}\right)\ll \dfrac{1}{x_1^{i_1}x_2^{i_2}x_3^{i_3}},\ \ (0\leqslant i_1, i_2, i_3\leqslant 1).$$
It is also easy to see that
$$\mathcal{D}_{i_1, i_2, i_3}\left(\dfrac{D^2}{x_2 x_3}\right)
\begin{cases}
	\ll \dfrac{D^2}{x_2 x_3}\cdot\dfrac{1}{x_2^{i_2}x_3^{i_3}}, & \text{if}\ \, i_1 = 0,\\
	=0, & \text{if}\ \, i_1 > 0.
\end{cases}$$
From here, we get
\begin{multline*}
	\mathcal{D}_{\vec{i}}f(x_1, x_2, x_3) \ll
	\sum\limits_{\substack{0 \leqslant j_s\leqslant i_s \\ 1\leqslant s\leqslant 3 \\ j_1 = i_1}}\mathcal{D}_{\vec{j}}\left( \hat{\psi}\left(\dfrac{ax_1}{x_2x_3}\right)\right) \cdot \mathcal{D}_{\vec{i}-\vec{j}}\left(\dfrac{D^2}{x_2 x_3}\right)\\
	\ll \sum\limits_{\substack{0 \leqslant j_s\leqslant i_s \\ 1\leqslant s\leqslant 3 \\ j_1 = i_1}}\dfrac{1}{x_1^{j_1}x_2^{j_2}x_3^{j_3}}\cdot\dfrac{D^2}{x_2x_3}\cdot\dfrac{1}{x_2^{i_2-j_2}x_3^{i_3-j_3}}
	\ll
	\dfrac{D^2}{x_2 x_3}\cdot\dfrac{1}{x_1^{i_1}x_2^{i_2}x_3^{i_3}}.
\end{multline*}
Taking into account that $\kappa_1 \sim D/(\Delta \Delta_1)$ and $\kappa_2 \sim D/(\Delta \Delta_2)$, we obtain
$$
\dfrac{D^2}{x_2 x_3} \ll \Delta^2 \Delta_1 \Delta_2 \ll X^4,
$$
which proves \eqref{Df}.  
Let us write the sum $\mathcal{W}^+$ in a more convenient form. To this end, set $\ell_1 = h$, $\ell_2 = \kappa_1$, and $\ell_3 = \kappa_2$. Also set
$$c(\vec{\ell}) = c_1(\vec{\ell})\exp\left(2\pi i\dfrac{\ell_1\nu}{\tau \ell_2\ell_3}\right)\!,$$
$$c_1(\vec{\ell}) = \chi_4(\ell_2\ell_3)\mathbb{I}_{(\ell_2, \ell_3) = 1} \mathbb{I}_{(\ell_2, \Delta) = (\ell_3, \Delta) = 1}\mathbb{I}_{(\ell_2, n_1) = (\ell_3, n_2) = 1},$$
$$K_1 = 0,\ \ L_1 = H,$$
$$K_{i+1} = \dfrac{D}{\Delta\Delta_i},\ \ L_{i+1} = \dfrac{D_1}{\Delta\Delta_i},\ \ (i = 1, 2).$$
From this, applying Lemma \ref{L11}, we obtain
\begin{multline*}
	\mathcal{W}^+ 
	= C_0(\vec{L})f(\vec{L})\\
	+ \sum_{s = 1}^3 (-1)^s \!\!\!\!\!\sum_{1\leqslant i_1<\cdots<i_s\leqslant 3}\int_{K_{i_1}}^{L_{i_1}}\cdots \int_{K_{i_s}}^{L_{i_s}}C_0(\vec{b}_{i_1, \ldots, i_s})\dfrac{\partial^s{f}(\vec{b}_{i_1, \ldots, i_s})}{\partial{x_{i_1}}\cdots \partial{x_{i_s}}}dx_{i_1}\cdots dx_{i_s},
\end{multline*}
where $\vec{b}_{i_1, \ldots, i_s}$ is defined in Lemma \ref{L11}, and
$$C_0(\vec{x}) = \sum\limits_{\substack{n_1, n_2\sim N \\ (n_j, \Delta ) = 1,\ j=1, 2\\ n_1 \equiv n_2\!\!\!\!\!\pmod{\Delta},\ n_1\neq n_2 }}\!\!\!\!\! b(n_1)\overline{b}(n_2)\sum\limits_{\substack{K_j<\ell_j\leqslant x_j \\ 1\leqslant j\leqslant 3}}c(\vec{\ell}).$$
Applying the bound \eqref{Df}, we get
$$|\mathcal{W}^+|\ll |C_0(\vec{x}_1)| X^4\left( 1+ \sum_{s = 1}^3  \sum_{1\leqslant i_1<\cdots<i_s\leqslant 3}\int_{K_{i_1}}^{L_{i_1}}\dfrac{dx_{i_1}}{x_{i_1}}\cdots \int_{K_{i_s}}^{L_{i_s}}\dfrac{dx_{i_s}}{x_{i_s}}\right)\!,$$
where the tuple 
$$
\vec{x}_1 = \vec{x}_1(\Delta, \Delta_1, \Delta_2) = (x_1', x_2', x_3'),
$$
satisfying the condition $K_j < x_j' \leqslant L_j$, is chosen so that the sum $C_0(\vec{x}_1)$ is maximal in absolute value.
Since
$$\int_{K_i}^{L_i}\dfrac{dx_i}{x_i}\ll
\begin{cases*}
	\ln H, & \text{if}\ \, i = 1,\\
	1, & \text{if}\ \, i = 2, 3,
\end{cases*}
$$
it follows that

\begin{equation}\label{W+}
	|\mathcal{W}^+|\ll |C_0(\vec{x}_1)| X^4\ln H.
\end{equation}
Now choose the tuple $\vec{x}_2 = (x_1'', x_2'', x_3'')$ so that
$$|C_0(\vec{x}_2)| = \max\limits_{\substack{\Delta, \Delta_1, \Delta_2\leqslant X \\
		(\Delta_1, \Delta_2) = 1\\
		\Delta_1, \Delta_2 |\Delta^{\infty}}}|C_0(\vec{x}_1(\Delta, \Delta_1, \Delta_2))|.$$  
It is clear that
$0<x_1''\leqslant H$
and
$$\dfrac{D}{X^2}<x_j''\leqslant 2D,\ \ 1< \dfrac{x''_j}{K_j}\leqslant 2\ \ (j = 2, 3).$$
Hence, from \eqref{werr} and $\eqref{W+}$ we get
\begin{multline}\label{W>0}
	W^{Err_1}_{>0}\ll \dfrac{MX^4\ln H}{D^2}\times|C_0(\vec{x}_2)|\sum_{\Delta,\Delta_1,\Delta_2\leqslant X}1\\
	\ll\dfrac{MX^7\ln H}{D^2}\times|C_0(\vec{x}_2)|.
\end{multline}
Now let us estimate the quantity $C_0(\vec{x}_2)$. We split this sum into parts depending on the residues that $n_1$, $n_2$, $\ell_2$, and $\ell_3$ give upon division by $\tau$,
\begin{equation}
	\label{C0}
	C_0(\vec{x}_2) = \sum\limits_{\substack{1\leqslant \alpha_j\leqslant \tau,\,  1\leqslant j\leqslant 4 \\ \alpha_1\equiv\alpha_2\!\!\!\!\pmod{\Delta}}} \sum\limits_{\substack{n_1, n_2\sim N\\ n_1\neq n_2 \\ n_1\equiv\alpha_1\!\!\!\!\pmod{\tau} \\ n_2\equiv\alpha_2\!\!\!\!\pmod{\tau} \\ (n_j, \Delta ) = 1,\ j=1, 2}}b(n_1)\bar{b}(n_2)\cdot S_0(\vec{x}_2, \vec{\alpha}),
\end{equation}
$$S_0(\vec{x}_2, \vec{\alpha}) = \sum\limits_{\substack{K_j< \ell_j\leqslant x_j'',\,  1\leqslant j\leqslant 3 \\ \ell_2\equiv\alpha_3\!\!\!\!\pmod{\tau} \\ \ell_3\equiv\alpha_4\!\!\!\!\pmod{\tau}}}c_1(\vec{\ell})\exp\left(2\pi i\dfrac{\ell_1\nu}{\tau \ell_2\ell_3}\right)\!.$$
Using \eqref{nu}, as well as the facts that $\ell_2 \equiv \alpha_3 \pmod{\tau}$ and $\ell_3 \equiv \alpha_4 \pmod{\tau}$, we obtain
\begin{multline*}\exp\left(2\pi i\dfrac{\ell_1\nu}{\tau \ell_2\ell_3}\right) = \exp\left(2\pi i \ell_1\dfrac{\lambda (\alpha_3\alpha_4)^*_\tau}{\tau} \right)\\
	\times \exp\left(2\pi i \ell_1\left( \dfrac{(\tau \ell_3 n_1)^*_{\ell_2}}{\ell_2} + \dfrac{(\tau \ell_2 n_2)^*_{\ell_3}}{\ell_3}\right)\right)\!.
\end{multline*}
Applying Lemma \ref{L10} to the second factor with $a = \tau$, $b = \ell_2$, $c = \ell_3$, and $d = n_1$, $e = n_2$, we find
\begin{multline*}
	\exp\left(2\pi i\dfrac{\ell_1\nu}{\tau \ell_2\ell_3}\right) = \exp\left(2\pi i \ell_1\dfrac{\lambda (\alpha_3\alpha_4)^*_\tau-(\alpha_1\alpha_3\alpha_4)^*_\tau}{\tau} \right)\\
	\times \exp\left(2\pi i \dfrac{\ell_1}{\tau \ell_2 \ell_3 n_1}\right)\exp\left(2\pi i \ell_1 \dfrac{(n_1-n_2)(\tau \ell_2 n_2)^*_{\ell_3n_1}}{\ell_3n_1}\right)\!.  
\end{multline*}
Note that $\lambda$ is determined by the numbers $\Delta, \Delta_1, \Delta_2$ and the residues $$(n_1)^*_{\Delta\Delta_1} = (\alpha_1)^*_{\Delta\Delta_1},\ \ (n_2)^*_{\Delta\Delta_2} = (\alpha_2)^*_{\Delta\Delta_2}.$$ 
Setting 
$$F(x_1, x_2, x_3) = \exp\left(\dfrac{ax_1}{x_2 x_3}\right)\!,\ a = \dfrac{2\pi i}{\tau n_1},$$ we have
$$S_0(\vec{x}_2, \vec{\alpha}) =  \sum\limits_{\substack{K_1<\ell_1\leqslant x_1''}}c(\ell_1)\sum\limits_{\substack{K_j<\ell_j\leqslant x_j'',\, j = 2, 3 \\ \ell_2 \equiv \alpha_3\!\!\!\!\pmod{\tau} \\ \ell_3 \equiv \alpha_4\!\!\!\!\pmod{\tau} }}F(\vec{\ell})c_1(\vec{\ell})\exp\left(2\pi i \ell_1 \dfrac{(n_1-n_2)(\tau \ell_2 n_2)^*_{\ell_3n_1}}{\ell_3n_1}\right)\!,$$
where
$$c(\ell_1) = \exp\left(2\pi i \ell_1\dfrac{\lambda (\alpha_3\alpha_4)^*_\tau-(\alpha_1\alpha_3\alpha_4)^*_\tau}{\tau} \right)\!.$$
Applying Lemma \ref{L10.5} with $g(z) = e^z$ for $1 \leqslant s \leqslant 3$ and
$$
1 \leqslant i_1 < \cdots < i_s \leqslant 3,
$$
we find
$$\dfrac{\partial^{s}F}{\partial x_{i_1}\cdots\partial x_{i_s}}\ll \left( \dfrac{ax_1}{x_2x_3}+\cdots+\left(\dfrac{ax_1}{x_2x_3}\right)^{s} \right)\dfrac{1}{x_{i_1} \cdots x_{i_s}}.$$
For $x_1 \leqslant H$ and $x_2 \asymp D/(\Delta \Delta_1)$, $x_3 \asymp D/(\Delta \Delta_2)$, by virtue of \eqref{HHH} we have
$$\dfrac{ax_1}{x_2 x_3}\ll \dfrac{H\Delta}{D^2 N}\ll \dfrac{(\ln M)^4X}{MN}\ll 1,$$
and therefore
$$\dfrac{\partial^{s}F}{\partial x_{i_1}\cdots\partial x_{i_s}}\ll \dfrac{1}{x_{i_1} \cdots x_{i_s}}.$$
Applying Lemma \ref{L11}, for some vector $\vec{x}_3 = (x_1^{(3)}, x_2^{(3)}, x_3^{(3)})$ satisfying the condition
$$
K_j < x_j^{(3)} \leqslant x_j'', \quad (1 \leqslant j \leqslant 3),
$$
we obtain
\begin{multline}\label{S0}
	|S_0(\vec{x}_2, \vec{\alpha})|\ll |C_1(\vec{x}_3)|\\
	\times \left(1 + \sum_{s=1}^3\sum_{1\leqslant i_1<\cdots<i_s\leqslant 3}\int_{K_{i_1}}^{L_{i_1}}\dfrac{dx_{i_1}}{x_{i_1}}\cdots \int_{K_{i_s}}^{L_{i_s}}\dfrac{dx_{i_s}}{x_{i_s}}\right)
	\ll |C_1(\vec{x}_3)|\ln H,
\end{multline}
where
$$C_1(\vec{x}_3)=\sum\limits_{\substack{K_1<\ell_1\leqslant x_1^{(3)}}}c(\ell_1)\sum\limits_{\substack{K_j<\ell_j\leqslant x_j^{(3)},\, j=2, 3\\ \ell_2 \equiv \alpha_3\!\!\!\!\pmod{\tau} \\ \ell_3 \equiv \alpha_4\!\!\!\!\pmod{\tau} }}c_1(\vec{\ell})\exp\left(2\pi i \ell_1 \dfrac{(n_1-n_2)(\tau \ell_2 n_2)^*_{\ell_3n_1}}{\ell_3n_1}\right)\!.$$
Finally, let us estimate the sum $C_1(\vec{x}_3)$. We split the sum over $\ell_1$ into dyadic intervals. %intervals whose right endpoint does not exceed twice the left endpoint.
Denote summation over the left endpoints of such intervals by a double prime. Also, we make a change of variables in the summation,
$$\mu = \ell_2\cdot\tau n_2,\ \ \eta = \ell_3\cdot n_1.$$
Then since $c_1(\vec{\ell}) = 0$ for $(\eta, \mu)>1$, we have
\begin{equation}\label{last_ineq}
	C_1(\vec{x}_3)
	\ll {\sum_{K\leqslant x_1^{(3)}}}''\left|\sum_{\ell_1\sim K}c(\ell_1)\sum_{\mu\sim K_2\tau n_2}a(\mu)\sum\limits_{\substack{\eta \sim K_3n_1 \\ (\eta,\,\mu) = 1}}b(\eta)\exp\left( 2\pi i \ell_1\dfrac{\vartheta\mu^*_\eta}{\eta}\right) \right|\!,
\end{equation}
where $\vartheta = n_1-n_2\neq 0,$ 
\begin{multline*}a(\mu) = \chi_4\left(\dfrac{\mu}{\tau n_2}\right)\mathbb{I}\left(\tau n_2 | \mu, \left(\frac{\mu}{\tau n_2},\ \Delta\right) = 1, \frac{\mu}{\tau n_2}\equiv \alpha_3\!\!\!\!\pmod{\tau}\right)\\
	\times\mathbb{I}\left( \frac{\mu}{\tau n_2}\in\left(K_2,x_2^{(3)}\right], \left(\frac{\mu}{\tau n_2}, n_1\right) = 1\right)\!.
\end{multline*}
and
\begin{multline*}b(\eta) = \chi_4\left(\dfrac{\eta}{n_1}\right)\mathbb{I}\left(n_1 | \eta, \left(\frac{\eta}{n_1},\ \Delta\right) = 1, \frac{\eta}{n_1}\equiv \alpha_4\!\!\!\!\pmod{\tau}\right)\\
	\times \mathbb{I}\left(\frac{\eta}{n_1}\in \left(K_3, x_3^{(3)}\right], \left(\frac{\eta}{n_1}, n_2\right) = 1\right)\!.
\end{multline*}
%_______________________________________________
Now let us estimate the inner sum.  
Noting that
$$K_2K_3\tau n_1 n_2\ll \dfrac{D^2}{\Delta^2\Delta_1\Delta_2}\Delta\Delta_1\Delta_2 N^2 \ll D^2N^2,$$
$$K_2K_3\tau n_1 n_2\gg \dfrac{D^2 N^2}{\Delta}\gg \dfrac{D^2 N^2}{X},$$
$$\dfrac{|\vartheta|K}{K_2 K_3 \tau n_1 n_2}\ll \dfrac{H\Delta}{D^2 N}\ll\dfrac{(\ln M)^4 X}{M N}\ll 1,$$
$$K_2\tau n_2 + K_3 n_1\ll D N X,$$
and using Theorem \ref{BettinChandee}, taking into account the inequality $X < N \leqslant x^\varepsilon$, we verify that the inner sum in \eqref{last_ineq} does not exceed in order
\begin{multline*}
\sqrt{K}DN\left( \left( KD^2 N^2\right)^{\frac{7}{20}+\varepsilon}\left(DNX\right)^{\frac{1}{4}} + (KD^2 N^2)^{\frac{3}{8}+\varepsilon}K^{\frac{1}{8}} (DNX)^{\frac{1}{8}} \right)\\
\ll x^{5\varepsilon }\left(K^{\frac{17}{20}}D^{\frac{39}{20}}+ KD^{\frac{15}{8}}\right)\!.  
\end{multline*}
Summing over $K$ gives
$$C_1(\vec{x}_3)\ll x^{5\varepsilon }\left(H^{\frac{17}{20}}D^{\frac{39}{20}}+ HD^{\frac{15}{8}}\right)\!.$$
It follows from \eqref{S0} and \eqref{C0} that
$$C_0(\vec{x}_2)\ll N^2 (\ln H) x^{5\varepsilon}\left(H^{\frac{17}{20}}D^{\frac{39}{20}}+ HD^{\frac{15}{8}}\right)\!.$$
Finally, from \eqref{W>0} we get
\begin{multline*}W^{Err_1}_{>0}\ll \dfrac{M}{D^2}X^{7}(\ln H)^2N^2 x^{5\varepsilon}\\
\times\left(H^{\frac{17}{20}}D^{\frac{39}{20}}+ HD^{\frac{15}{8}}\right)
\ll M x^{20\varepsilon} \left( \dfrac{H^{\frac{17}{20}}}{D^{\frac{1}{20}}} + \dfrac{H}{D^{\frac{1}{8}}}\right)\!.
\end{multline*}
Similarly, the sum $W^{Err_1}_{<0}$ is estimated.\\  
Finally, we find
\begin{equation}\label{Werr1}
W^{Err_1}\ll M x^{20\varepsilon} \left( \dfrac{H^{\frac{17}{20}}}{D^{\frac{1}{20}}} + \dfrac{H}{D^{\frac{1}{8}}}\right)\ll x^{21\varepsilon }\left(D^{\frac{33}{20}}M^{\frac{3}{20}} + D^{\frac{15}{8}}\right)\!.
\end{equation}

\section{Estimation of $W^{MT}-U^{MT}$}
Set
$$
Z = W^{MT} - U^{MT}, %+ O\left(\dfrac{M N^2}{\sqrt{X}}\right),
$$
then from \eqref{U^MT} and \eqref{W^MT} we obtain
\begin{equation}\label{W-U}
Z = M\hat{\psi}(0)\sum\limits_{\substack{\Delta \leqslant D_1 \\ \Delta \text{\, odd}}}\dfrac{1}{\Delta }\sum\limits_{\substack{k_1, k_2\sim \frac{D}{\Delta} \\ (k_1, k_2) = 1}}\dfrac{\chi_4(k_1)\chi_4(k_2)}{k_1 k_2}\cdot S,
\end{equation}
where
\begin{equation*}
S =  \sum\limits_{\substack{\delta = 1 \\ (\delta, \Delta) = 1}}^\Delta \sum\limits_{\substack{n_1, n_2\sim N  \\ (n_j, \Delta k_j) = 1 \\ n_j\equiv \delta\!\!\!\!\!\pmod{\Delta},\, j=1, 2}}b(n_1)\overline{b}(n_2)-\dfrac{1}{\varphi(\Delta)} \sum\limits_{\substack{n_1, n_2\sim N  \\ (n_j, \Delta k_j) = 1,\, j=1, 2}}b(n_1)\overline{b}(n_2).
\end{equation*}
Using the M\"obius function summation property, we obtain
\begin{equation}
\label{Moebius}
\sum\limits_{\substack{k_1, k_2\sim \frac{D}{\Delta} \\ (k_1, k_2) = 1}}\dfrac{\chi_4(k_1)\chi_4(k_2)}{k_1 k_2} = \sum_{t\leqslant \frac{2D}{\Delta}}\mu(t)\sum\limits_{\substack{k_1, k_2\sim \frac{D}{\Delta} \\ k_j\equiv 0\!\!\!\!\pmod{t},\, j=1, 2 }}\dfrac{\chi_4(k_1)\chi_4(k_2)}{k_1 k_2}.
\end{equation}
%Применяя формулу для дисперсии
%$$\mathbb{E}\left(|Y-\mathbb{E}(Y)|^2\right) = \mathbb{E}(|Y|^2)-|\mathbb{E}(Y)|^2$$
Let $Y$ be a random variable taking each of the values
$$y(\delta) = \sum\limits_{\substack{k\sim \frac{D}{\Delta} \\ k\equiv 0 \!\!\!\!\pmod{ t}}}\dfrac{\chi_4(k)}{k}\sum\limits_{\substack{n \sim N\\ (n, \Delta k) = 1 \\ n\equiv \delta\!\!\!\!\!\pmod{\Delta}}}b(n),\ \ \  (1\leqslant \delta\leqslant \Delta,\ \ (\delta, \Delta) = 1),$$
with probability $1/\varphi(\Delta)$. Then \eqref{W-U} can be written in the form
\begin{equation}	Z	= M\hat{\psi}(0)\sum\limits_{\substack{\Delta \leqslant D_1 \\ \Delta \text{\, odd}}}\dfrac{1}{\Delta }\sum_{t\leqslant \frac{2D}{\Delta}}\mu(t)\varphi(\Delta)\left(\mathbb{E}(|Y|^2)-|\mathbb{E}(Y)|^2\right)\!,
\end{equation}
where $\mathbb{E}(\cdot)$ denotes the expectation of a random variable.\\  
Using the formula for the variance
$$
\mathbb{E}\left(|Y - \mathbb{E}(Y)|^2\right) = \mathbb{E}(|Y|^2) - |\mathbb{E}(Y)|^2,
$$
and again applying the equality \eqref{Moebius}, we obtain
\begin{equation*}Z = M\hat{\psi}(0)\sum\limits_{\substack{\Delta \leqslant D_1 \\ \Delta \text{\, odd}}}\dfrac{1}{\Delta }\sum\limits_{\substack{k_1, k_2\sim \frac{D}{\Delta} \\ (k_1, k_2) = 1}}\dfrac{\chi_4(k_1)\chi_4(k_2)}{k_1 k_2}\sum\limits_{\substack{\delta = 1 \\ (\delta, \Delta) = 1}}^\Delta E(\Delta, \delta, k_1)\bar{E}(\Delta, \delta, k_2),
\end{equation*}
where 
$$E(\Delta, \delta, k) = \sum\limits_{\substack{n \sim N \\ (n,  \Delta k) = 1 \\ n\equiv\delta\!\!\!\!\pmod{\Delta}}}b(n) - \dfrac{1}{\varphi(\Delta)}\sum\limits_{\substack{n \sim N \\ (n,  \Delta k) = 1}}b(n).$$
Passing to the estimates, we have
\begin{equation*}|Z|
\leqslant M|\hat{\psi}(0)|
\sum\limits_{\substack{\Delta \leqslant D_1 \\ \Delta \text{\, odd}}}\dfrac{1}{\Delta }
\sum\limits_{\substack{\delta = 1 \\ (\delta, \Delta) = 1}}^\Delta\left( \sum_{k\sim \frac{D}{\Delta}}
\dfrac{|E(\Delta, \delta, k)|}{k}\right)^2\!.
\end{equation*}
Using the Cauchy~--~Schwarz inequality, we find
\begin{multline}\label{estZ}
|Z|\leqslant M|\hat{\psi}(0)|\sum\limits_{\substack{\Delta \leqslant D_1 \\ \Delta \text{\, odd}}}\dfrac{1}{\Delta }\sum\limits_{\substack{\delta = 1 \\ (\delta, \Delta) = 1}}^\Delta\sum_{k'\sim\frac{D}{\Delta}}\dfrac{1}{k'}
\sum_{k\sim \frac{D}{\Delta}}\dfrac{|E(\Delta, \delta, k)|^2}{k}\\
\ll \dfrac{M}{D}\sum\limits_{\substack{\Delta \leqslant D_1 \\ \Delta \text{\, odd}}}\sum\limits_{\substack{\delta = 1 \\ (\delta, \Delta) = 1}}^\Delta \sum_{k\sim\frac{D}{\Delta}}|E(\Delta, \delta, k)|^2 = \dfrac{M \mathcal{E}}{D},
\end{multline}
where the meaning of the notation $\mathcal{E}$ is clear.\\
Using \eqref{beta(n)} and \eqref{ab}, as well as the fact that
$(N, N_1] \subseteq \mathcal{J}$, we can write
$$E(\Delta, \delta, k) = \sum\limits_{\substack{N<p\leqslant N_1,\, p \nmid k \\ p \equiv 1\!\!\!\!\pmod{4} \\ p\equiv \delta\!\!\!\!\pmod{\Delta} }}p^{-it}-\dfrac{1}{\varphi(\Delta)}\sum\limits_{\substack{N<p\leqslant N_1,\, p \nmid \Delta k \\ p \equiv 1\!\!\!\!\pmod{4} }}p^{-it}.$$
Let us introduce the notations
$$\pi_r(x) = \sum\limits_{\substack{N<p\leqslant x,\, p \nmid r \\ p \equiv 1\!\!\!\!\pmod{4} }} 1,\ \ \ \ \pi_r(x; q, a) = \sum\limits_{\substack{N<p\leqslant x,\, p \nmid r \\ p \equiv 1\!\!\!\!\pmod{4} \\ p \equiv a\!\!\!\!\pmod{q} }} 1.$$
We also set
$$C(x; \Delta, \delta) = \pi_k(x; \Delta, \delta) - \pi_k(N; \Delta, \delta),$$
$$C(x) = \pi_{\Delta k}(x) - \pi_{\Delta k}(N).$$
Using partial summation, we find
\begin{multline}E(\Delta, \delta, k) = N_1^{-it}\left(C(N_1;\Delta, \delta) - \dfrac{1}{\varphi(\Delta)}C(N_1)\right)\\ +it\int_N^{N_1}\left(C(u;\Delta, \delta) - \dfrac{1}{\varphi(\Delta)}C(u) \right)u^{-it-1}du.
\end{multline}

Taking into account the inequality $|t| \leqslant T$ %and 
%$$(a+b)^2 \leqslant 2a^2 + 2b^2, \quad (a, b \in \mathbb{R}),$$ 
for some $N < N_2 \leqslant N_1$, we obtain
$$
|E(\Delta, \delta, k)|^2 \ll T^2 \left| C(N_2; \Delta, \delta) - \dfrac{1}{\varphi(\Delta)} C(N_2) \right|^2\!.
$$
Let us write the last difference in the form
$$C(N_2;\Delta, \delta) - \dfrac{1}{\varphi(\Delta)}C(N_2) = R- \rho_1 + \dfrac{1}{\varphi(\Delta)}\rho_2,$$
where 
$$ R = R(N_2; \Delta, \delta) = \sum\limits_{\substack{N<p\leqslant N_2 \\ p \equiv 1\!\!\!\!\pmod{4} \\ p\equiv \delta\!\!\!\!\pmod{\Delta} }}1-\dfrac{1}{\varphi(\Delta)}\sum\limits_{\substack{N<p\leqslant N_2\\ p \equiv 1\!\!\!\!\pmod{4} }} 1$$
and
$$\rho_1 = \rho_1(N_2; \Delta, \delta) = \sum\limits_{\substack{p|k \\ N<p\leqslant N_1\\ p \equiv 1\!\!\!\!\pmod{4} \\ p\equiv \delta\!\!\!\!\pmod{\Delta} }}1,\ \ \ \ \rho_2 = \rho_2(N_2; \Delta, \delta) = \sum\limits_{\substack{p|\Delta k \\ N<p\leqslant N_1\\ p \equiv 1\!\!\!\!\pmod{4} }} 1.$$
Therefore, we have
$$\mathcal{E}\ll T^2\sum\limits_{\substack{\Delta \leqslant D_1 \\ \Delta \text{\, odd}}}\sum\limits_{\substack{\delta = 1 \\ (\delta, \Delta) = 1}}^\Delta \sum_{k\sim\frac{D}{\Delta}}\left(R^2 +\rho_1^2 + \dfrac{1}{\varphi^2(\Delta)}\rho_2^2 \right) = \mathcal{E}_1 + \mathcal{E}_2^{(1)} + \mathcal{E}_2^{(2)}.$$
For the sum $\mathcal{E}_2^{(1)}$ we have
\begin{multline*}\mathcal{E}_2^{(1)} \ll T^2 \sum\limits_{\substack{\Delta \leqslant 2D}}\sum\limits_{\substack{\delta = 1 \\ (\delta, \Delta) = 1}}^\Delta \sum_{k\sim\frac{D}{\Delta}}\sum\limits_{\substack{p_1, p_2 |k \\ N<p_1, p_2\leqslant 2N \\ p_1\equiv p_2\equiv \delta\!\!\!\!\pmod{\Delta} }} 1\\
\ll  T^2 \sum\limits_{\substack{\Delta \leqslant 2D}}\sum\limits_{\substack{ N<p_1, p_2\leqslant 2N \\ p_1\equiv p_2\!\!\!\!\pmod{\Delta} }} \sum\limits_{\substack{k \sim \frac{D}{\Delta} \\ k\equiv 0\!\!\!\!\pmod{[p_1, p_2]}}} 1 \leqslant T^2 \sum\limits_{\substack{\Delta \leqslant 2D}}\sum\limits_{\substack{ N<p_1, p_2\leqslant 2N}} \left\lfloor\dfrac{2D}{\Delta [p_1, p_2]}\right\rfloor\\
\ll T^2 D \sum\limits_{\substack{\Delta \leqslant 2D}}\dfrac{1}{\Delta}\left( \sum\limits_{\substack{ N<p\leqslant 2N }}\dfrac{1}{p} + \sum\limits_{\substack{N<p_1, p_2\leqslant 2N \\ p_1\neq p_2}}\dfrac{1}{p_1 p_2}\right)
\ll T^2 D\mathcal{L}.
\end{multline*}
For the sum $\mathcal{E}_2^{(2)}$ we have
$$\mathcal{E}_2^{(2)} \ll T^2 \sum\limits_{\substack{\Delta \leqslant 2D}}\sum\limits_{\substack{\delta = 1 \\ (\delta, \Delta) = 1}}^\Delta \sum_{k\sim\frac{D}{\Delta}}\dfrac{1}{\varphi^2(\Delta)} \left(\omega^2(\Delta)+\omega^2(k) \right)\!,$$
where $\omega(n) = \sum_{p \mid n} 1$ denotes the prime divisor function.

The contribution of the term $\omega^2(k)$ is estimated similarly to the sum $\mathcal{E}_2^{(1)}$ and is of order $O(T^2 D \mathcal{L})$. The contribution of the term $\omega^2(\Delta)$ does not exceed in order
$$T^2 D\sum_{\Delta \leqslant 2 D}\dfrac{\omega^2(\Delta)}{\Delta\varphi(\Delta)}\ll T^2 D.$$
Thus, $\mathcal{E}_2^{(2)}\ll T^2 D\mathcal{L}.$

Let us proceed to the estimate of $\mathcal{E}_1$. We have
\begin{equation*}\mathcal{E}_1 \ll T^2 D \sum\limits_{\substack{\Delta \leqslant D_1 \\ \Delta \text{\, odd}}}\dfrac{1}{\Delta} \sum\limits_{\substack{\delta = 1 \\ (\delta, \Delta) = 1}}^\Delta R^2(N_2; \Delta, \delta)
= T^2 D \left( \Sigma_1 + \Sigma_2 + \Sigma_3\right),
\end{equation*}
where $\Sigma_1$ denotes the contribution from $1 \leqslant \Delta \leqslant \mathcal{L}^{A_4}$ to the sum $\mathcal{E}_1$ for some $A_4 > 2 A_3$, $\Sigma_2$ denotes the contribution from $\mathcal{L}^{A_4} < \Delta \leqslant N/C$ for some constant $C \geqslant 10$, and finally, $\Sigma_3$ denotes the contribution from $N/C < \Delta \leqslant 2D$.

Let us estimate the contribution of the sum $\Sigma_1$. Since $N \geqslant e^{\mathcal{L}^\varepsilon}$, we have $\mathcal{L} \leqslant (\ln N)^{\varepsilon}$, and therefore $\Delta \leqslant \mathcal{L}^{A_4} \leqslant (\ln N)^{\frac{A_4}{\varepsilon}}$. Consequently, by the Siegel~--~Walfisz theorem (see \cite[Theorem 8.17]{Siegel-Walfisz})
$$
R(N_2; \Delta, \delta) \ll_{\varepsilon, A_4} N \exp\left(-c_1 \sqrt{\ln N}\right)
$$
for some $c_1 > 0$. From this we obtain that the contribution of $\Sigma_1$ to the sum $\mathcal{E}_1$ does not exceed in order
\begin{multline*}T^2 D \sum\limits_{\substack{\Delta \leqslant \mathcal{L}^{A_4}}}\dfrac{1}{\Delta} \sum\limits_{\substack{\delta = 1 \\ (\delta, \Delta) = 1}}^\Delta N^2 \exp\left(-2c_1\sqrt{\ln N}\right) \\
\ll T^2 N^2 D \mathcal{L}^{A_4}\exp\left(-2c_1\mathcal{L}^{\frac{\varepsilon}{2}}\right) \ll_{\varepsilon, A_3, A_4} N^2 D \exp\left(-c_1\mathcal{L}^{\frac{\varepsilon}{2}}\right)\!.
\end{multline*}
Now let us estimate the contribution from the sum $\Sigma_2$. Estimating the sum $R$ trivially, we have
$$|R(N_2;\Delta, \delta)|\ll \dfrac{N}{\Delta} + 1 + \dfrac{N}{\varphi(\Delta)} \ll\dfrac{N}{\varphi(\Delta)}.$$ From this, the desired contribution does not exceed
\begin{multline*}
T^2 D \sum_{\mathcal{L}^{A_4}<\Delta\leqslant \frac{N}{C}}\dfrac{1}{\Delta}\sum\limits_{\substack{\delta = 1 \\ (\delta, \Delta) = 1}}^\Delta \dfrac{N^2}{\varphi^2(\Delta)}\\
\ll N^2 T^2 D\sum_{\Delta>\mathcal{L}^{A_4}}\dfrac{1}{\Delta\varphi(\Delta)}\ll \dfrac{N^2 T^2 D}{\mathcal{L}^{A_4}} = N^2D \mathcal{L}^{2A_3-A_4}.
\end{multline*}
Finally, let us estimate the contribution of $\Sigma_3$ to the sum $\mathcal{E}_1$. Note that for $\Delta > N/C$, there exist at most $O_C(1)$ natural numbers $n \leqslant 2N$ lying in the progression $n \equiv \delta \pmod{\Delta}$. From this, for the quantity $R^2$ we obtain
$$|R(N_2;\Delta, \delta)|^2\ll \pi^2(2N; \Delta, \delta) + \dfrac{N^2}{\varphi^2(\Delta)} \ll_C \pi(2N; \Delta, \delta) + \dfrac{N^2}{\varphi^2(\Delta)}.$$
Consequently, the contribution of $\Sigma_3$ does not exceed in order
$$T^2 D \sum_{\frac{N}{C}<\Delta\leqslant 2D}\dfrac{1}{\Delta}\sum\limits_{\substack{\delta = 1 \\ (\delta, \Delta) = 1}}^\Delta\left( \pi(2N; \Delta, \delta) + \dfrac{N^2}{\varphi^2(\Delta)} \right)\ll T^2 D N.$$
Thus, we have
$$\mathcal{E}_1\ll N^2 D\left(\exp\left(-c_1\mathcal{L}^{\frac{\varepsilon}{2}}\right)+ \mathcal{L}^{2A_3-A_4}+\dfrac{T^2}{N}\right)\ll \dfrac{N^2 D}{\mathcal{L}^{A_4 - 2A_3}}.$$
Consequently, taking into account that $T = \mathcal{L}^{A_2}$ and $N \geqslant \exp\left(\mathcal{L}^\varepsilon\right)$, we have
$$\mathcal{E}\ll N^2 D\left(\dfrac{1}{\mathcal{L}^{A_4-2A_3}} + \dfrac{T^2 \mathcal{L}}{N^2}\right)\ll \dfrac{N^2 D}{\mathcal{L}^{A_4-2A_3}}.$$
Let us take $X = N^\varepsilon$, then from \eqref{estZ} we find
$$W^{MT} - U^{MT}\ll \dfrac{MN^2}{\mathcal{L}^{A_4-2A_3}}.$$
\section{Completion of the estimation of $Q^{Err_2}$}
From \eqref{W - 2Re U + V}, \eqref{W}, and the inequalities \eqref{Werr1} and \eqref{Werr2}, it follows that
\begin{multline*}
W-2\Real U + V =W^{MT} - U^{MT} + W^{Err_1} + W^{Err_2} + O\left(\dfrac{MN^2\mathcal{L}^3}{\sqrt{X}} + N^2\mathcal{L}^6 D\right)\\
\ll \dfrac{MN^2}{\mathcal{L}^{A_4-2A_3}} + x^{21\varepsilon }\left(D^{\frac{33}{20}}M^{\frac{3}{20}}
+ D^{\frac{15}{8}}\right)\\ + \dfrac{N^2\ln X}{M} + \dfrac{MN^2\mathcal{L}^3}{\sqrt{X}} + N^2\mathcal{L}^6 D.
\end{multline*}
In the sum \eqref{Qerr2}, we have $x \geqslant MN > x \mathcal{L}^{-A_1}$ and $N \leqslant x^\varepsilon$, and therefore for sufficiently large $x$ it holds that
$$
M \geqslant x^{1 - 2\varepsilon}.
$$
Taking into account that $D \leqslant x^{\frac{1}{2} + \varepsilon}$, $X = N^\varepsilon$, $\exp(\mathcal{L}^\varepsilon) \leqslant N$, and $0 < \varepsilon < \frac{1}{10000}$, we obtain
$$
W - 2 \Real U + V \ll \dfrac{M N^2}{\mathcal{L}^{A_4 - 2 A_3}}.
$$
For the quantity $\tilde{U}$ defined in \eqref{Utilde}, from \eqref{a(m)} and \eqref{Uest} we get
$$
\tilde{U}(D, N, M, t) \ll \dfrac{M N}{\mathcal{L}^{\frac{A_4}{2} - A_3}}.
$$
From this, using Lemma \ref{L8}, from \eqref{Qerr2} we find
\begin{multline*}Q^{Err_2}\ll_{\varepsilon, A, A_1, A_2} \sum_{k<\mathcal{L}^{1-\varepsilon}}\dfrac{1}{k}{\sum\limits_{\substack{D, N, M \\ x\LL^{-A_1}<NM\leqslant x}}}''{\mathcal{L}^{A_2}\ln \mathcal{L}\cdot\dfrac{x}{\mathcal{L}^{\frac{A_4}{2}-{A_3}}}} + \dfrac{x\ln \mathcal{L}}{\mathcal{L}^{1-\frac{\varepsilon}{4}}}\\
\ll \dfrac{x}{\mathcal{L}^{F}} + \dfrac{x}{\mathcal{L}^{1-\varepsilon}},
\end{multline*}
where $F = \frac{A_4}{2} - A_3 - A_2 - 3.$  
Choose $A_1 = A + 3$, $A_2 = A + 4$, $A_3 = A + 9$, and $A_4 = 6A + 32$. In the formula \eqref{CA}, take $B = 5$, then $A = 100$ and
$$
Q^{Err_2} \ll_{\varepsilon} \dfrac{x}{\mathcal{L}^{100}} + \dfrac{x}{\mathcal{L}^{1-\varepsilon}} \ll \dfrac{x}{\mathcal{L}^{1-\varepsilon}}.
$$
Hence, for the quantity $Q^{Err}$ defined in \eqref{Error}, we also obtain
\begin{equation}\label{Finalerr}
Q^{Err}\ll_\varepsilon \dfrac{x}{\mathcal{L}^{1-\varepsilon}}.
\end{equation}

\section{Evaluation of $Q^{MT}$}
Recall that $$y = \sqrt{x}\mathcal{L}^A = \sqrt{x}\mathcal{L}^{100}.$$

Let us estimate the quantity $Q^{MT_{3,2}}$ defined in \eqref{Q^{MT_{3, 2}}}. Changing the order of summation, we obtain
$$Q^{MT_{3,2}} = {\sum\limits_{\substack{k <\frac{x}{2} \\ (k, 2) = 1}}}'\dfrac{1}{h(k)}\sum\limits_{\frac{2k}{y}\leqslant d <\frac{\sqrt{x}}{\mathcal{L}^{100}}}\dfrac{\chi_4(d)}{\varphi(d)}.$$
Applying Lemma 10 from \cite[Chapter V, \S 3]{Hooley}, we find
\begin{equation*}\label{FinalQ3}
Q^{MT_{3,2}} \ll \sum_{k\leqslant \frac{x}{2}}\left( \dfrac{\tau(k)\mathcal{L}y}{k} + \dfrac{\tau(k)\mathcal{L}^{101}}{\sqrt{x}}\right)\ll \sqrt{x}\mathcal{L}^{103}.
\end{equation*}
Adding $Q^{MT_1}$ and $Q^{MT_2}$, defined in \eqref{Q^{MT_1}} and \eqref{Q^{MT_2}} respectively, changing the order of summation and again applying Lemma 10 from \cite{Hooley}, we obtain
\begin{multline}Q^{MT_1} + Q^{MT_2}
= \sum_{d\leqslant\sqrt{x}\mathcal{L}^{100}}\dfrac{\chi_4(d)}{\varphi(d)}{\sum\limits_{\substack{n \leqslant x \\ (n, d) = 1}}}'\dfrac{1}{h(n)}
={\sum_{n\leqslant x}}'\dfrac{1}{h(n)}\sum\limits_{\substack{d \leqslant  \sqrt{x}\mathcal{L}^{100} \\ (d, n) = 1}}\dfrac{\chi_4(d)}{\varphi(d)}\\
= {\sum_{n\leqslant x}}'\dfrac{1}{h(n)}\left( \dfrac{\pi c}{4} E(n) + O\left(\dfrac{\tau(n)\mathcal{L}}{\sqrt{x}\mathcal{L}^{100}}\right) \right)\\
= \dfrac{\pi c}{4} {\sum_{n\leqslant x}}'\dfrac{E(n)}{h(n)} + O(\sqrt{x}),
\end{multline}
where
\begin{equation}\label{const_c}
c = \prod_{p}\left(1+\dfrac{\chi_4(p)}{p(p-1)}\right)\!,
\end{equation}
$$E(n) = \prod\limits_{\substack{p | n \\ p\equiv 1\!\!\!\!\pmod{4}}}\dfrac{(p-1)^2}{p^2-p+1} \prod\limits_{\substack{p | n \\ p\equiv 3\!\!\!\!\pmod{4}}}\dfrac{p^2-1}{p^2-p-1}.
$$
Similarly, for the quantity $Q^{MT_{3,1}}$ defined in \eqref{Q^{MT_{3, 1}}}, we obtain
$$Q^{MT_{3,1}} = \dfrac{\pi c}{4}\left({\sum\limits_{\substack{k\leqslant \frac{x}{2} \\ (k,2) = 1}}}'\dfrac{E(k)}{h(k)} - {\sum\limits_{\substack{k\leqslant \frac{x}{4} }}}'\dfrac{E(k)}{h(k)} \right) + O(\sqrt{x}\mathcal{L}^{102}).$$
Thus, from \eqref{Main} we find
$$Q^{MT} = \dfrac{\pi c}{4}\left( H(x) + H_1\left(\dfrac{x}{2}\right) - H\left(\dfrac{x}{4}\right) \right) + O\left(\sqrt{x}\mathcal{L}^{103}\right)\!,$$
where
$$H(x) = {\sum_{n\leqslant x}}'\dfrac{E(n)}{h(n)},\ \ H_1(x) = {\sum\limits_{\substack{n\leqslant x \\ (n,2) = 1}}}'\dfrac{E(n)}{h(n)}.$$
Since for each integer $\ell\geqslant 1$ we have $E(2\ell) = E(\ell)$ and $h(2\ell) = h(\ell)$, it follows that
$$H_1(x) = H(x)-H\left(\dfrac{x}{2}\right)\!.$$
Hence, 
\begin{equation}\label{QMT}
Q^{MT} = \dfrac{\pi c}{4}\left( H(x) + H\left(\dfrac{x}{2}\right) - 2H\left(\dfrac{x}{4}\right) \right) + O\left(\sqrt{x}\mathcal{L}^{103}\right)\!.
\end{equation}
Now let us find the asymptotic formula for $H(x)$. Set
$$
\mathcal{H}(s) = {\sum_{n \geqslant 1}}' \dfrac{E(n)}{h(n)} n^{-s}, \quad s = \sigma + it, \quad \sigma > 1.
$$
Using \eqref{h(p^nu)}, as well as the fact that for any $\nu \geqslant 0$ we have
$$
E(2^\nu) = h(2^\nu) = 1,
$$
we find
\begin{multline}\mathcal{H}(s) = \left(1-2^{-s}\right)^{-1}\prod\limits_{p \equiv 1\!\!\!\!\pmod{4}}\left(1 + \dfrac{(p-1)^2}{p^2-p+1}\left(\dfrac{1}{2p^s}+\dfrac{1}{3p^{2s}}+\cdots \right) \right)\\
\times \prod_{p\equiv 3\!\!\!\!\pmod{4}}\left(1+\dfrac{p^2-1}{p^2-p-1}\cdot\dfrac{1}{p^{2s}-1}\right)\!.
\end{multline}
From the Euler product expansions of the Riemann zeta function $\zeta(s)$ and the Dirichlet $L$-function $L(s, \chi_4)$, it follows that
$$\prod_{p \equiv 1\!\!\!\!\!\pmod{4}}\left(1 - \dfrac{1}{p^{s}}\right)^{-\frac{1}{2}} = \sqrt[4]{\left(1-2^{-s}\right)\zeta(s)L(s, \chi_4)}\prod_{p \equiv 3\!\!\!\!\pmod{4}}\left(1-\dfrac{1}{p^{2s}}\right)^{\frac{1}{4}}\!,$$
where $\Real s > 1$ and the principal branch of the root is chosen.  
From this, we obtain
$$\mathcal{H}(s) = \sqrt[4]{\zeta(s)}\mathcal{G}(s),$$
where
$$\mathcal{G}(s) = \left(1-2^{-s}\right)^{-\frac{3}{4}}\sqrt[4]{L(s, \chi_4)}\mathcal{P}_1(s)\mathcal{P}_3(s),$$
\begin{equation}\label{P1}
\mathcal{P}_1(s) = \prod_{p\equiv 1\!\!\!\!\pmod{4}} \left(1-p^{-s}\right)^{\frac{1}{2}}\left(1 + \dfrac{(p-1)^2}{p^2-p+1}\left(\dfrac{1}{2p^s}+\dfrac{1}{3p^{2s}}+\cdots \right) \right)\!,
\end{equation}
\begin{equation}\label{P3}
\mathcal{P}_3(s) = \prod_{p\equiv 3\!\!\!\!\pmod{4}} \left(1-p^{-2s}\right)^{\frac{1}{4}}\left(1 + \dfrac{p^2-1}{p^2-p-1}\cdot\dfrac{1}{p^{2s}-1} \right)\!.
\end{equation}
Set
$$\tilde{\mathcal{H}}(s) = \dfrac{\sqrt[4]{(s-1)\zeta(s)}\mathcal{G}(s)}{s}.$$
Note that for $n \leqslant x$ we have
$$E(n)\leqslant \prod_{p|n}\left(1+ \dfrac{p}{p^2-p-1}\right)\ll  \dfrac{n}{\varphi(n)}\ll \ln \mathcal{L}.$$
Let us fix some $T_0 \leqslant T \leqslant \sqrt{x}$ and take $b = 1 + \frac{1}{\mathcal{L}}$. Then, according to Perron's formula,
$$H(x) = j + O\left(\dfrac{x\mathcal{L}\ln\mathcal{L}}{T}\right)\!,$$
where
$$j = \dfrac{1}{2\pi i}\int_{b-iT}^{b+iT}\mathcal{H}(s)\dfrac{x^s}{s}ds =\dfrac{1}{2\pi i}\int_{b-iT}^{b+iT}\dfrac{\tilde{\mathcal{H}}(s)x^s }{\sqrt[4]{s-1}}ds.$$
For some $c_0 > 0$ (see \cite[Ch. I; \S 6, Theorem 4 and \S 7, Theorem 2]{VoKar}), the function $\zeta(s) L(s, \chi_4)$ has no zeros in the region
\begin{equation}\label{Domain}
\sigma \geqslant 1 - \dfrac{c_0}{\ln T},\ \ |t|\leqslant T.
\end{equation}
In the same region, we have (see \cite[Ch. IV, \S 2, Theorem 2]{VoKar})
\begin{equation}\label{Zetabound}
|\zeta(s)|\ll (\ln T)^{\frac{2}{3}}
\end{equation}
and 
\begin{equation}\label{Lbound}
L(s, \chi_4) \ll \ln T.
\end{equation} 
Indeed, since for $n \leqslant T$ we have $|n^{-s}| \ll n^{-1}$, it follows that
\begin{equation*}
L(s, \chi_4) = \sum_{n\leqslant T}\dfrac{\chi_4(n)}{n^s} + s\int_{T}^{+\infty}\sum_{T<n\leqslant u}\chi_4(n)\dfrac{du}{u^{s+1}}
\ll \ln T + {T^{1 - \sigma}}\ll \ln T.
\end{equation*}
Note also that in the specified region the products $\mathcal{P}_1(s)$ and $\mathcal{P}_3(s)$ converge uniformly, and therefore define analytic functions, moreover
\begin{equation}\label{P1P3}
\mathcal{P}_1(s)\mathcal{P}_3(s)\ll 1.
\end{equation} 
Indeed, since $\sigma \geqslant 2/3$, for $\mathcal{P}_3(s)$ we have
$$
\mathcal{P}_3(s) \ll \prod_p \left(1 + O\left(\dfrac{1}{p^{2\sigma}}\right)\right) \ll 1.
$$
For $\mathcal{P}_1(s)$, by virtue of the equality $E(p) = 1 + O(1/p)$, we have
\begin{multline*}\mathcal{P}_1(s) = \prod_{p \equiv 1\!\!\!\!\!\pmod{4}}\left(1 - \dfrac{1}{2p^s} + O\left(\dfrac{1}{p^{2\sigma}}\right)  \right)\\
\times \left(1 + \left(1 + O\left( \dfrac{1}{p}\right) \right)\left(\dfrac{1}{2p^s} + O\left(\dfrac{1}{p^{2\sigma}}\right)  \right)   \right)\\
= \prod_{p \equiv 1\!\!\!\!\!\pmod{4}}\left(1 + O\left( \dfrac{1}{p^{\sigma + 1}} + \dfrac{1}{p^{2\sigma}}\right) \right)\ll 1.
\end{multline*}
Now set $a = 1 - \frac{c_0}{\ln T}$. Consider the rectangular contour $\Gamma$ with vertices at the points $a \pm i T$, $b \pm i T$, inside which a horizontal cut is made from $a$ to $1$. Since the functions $\zeta(s)$ and $L(s, \chi_4)$ have no zeros inside the contour $\Gamma$, the function $\mathcal{H}(s)$ is analytic inside $\Gamma$. Hence, by Cauchy's residue theorem, we have
\begin{multline}\dfrac{1}{2\pi i}\int_{\Gamma}\dfrac{\tilde{\mathcal{H}}(s)x^s }{\sqrt[4]{s-1}}ds = \dfrac{1}{2\pi i}\left( \int_{b-iT}^{b + iT} + \int_{b+iT}^{a+iT} + \int_{a+iT}^{a+i0}\right.\\
\left.+ \int_{a + i0}^{1+i0}+\int_{1-i0}^{a-i0} + \int_{a-i0}^{a-iT} + \int_{a-iT}^{b-iT}\right) \dfrac{\tilde{\mathcal{H}}(s)x^s }{\sqrt[4]{s-1}}ds \\
= j + j_1 + j_2 + j_3 + j_4 + j_5 + j_6 = 0,
\end{multline}
where the meanings of the notations $j_1, j_2, \ldots, j_6$ are clear. From this we obtain
$$
j = -j_1 - j_2 - j_3 - j_4 - j_5 - j_6.
$$
Let us find the asymptotic formula for $J = -j_3 - j_4$. We have
\begin{multline*}j_3 = \dfrac{1}{2\pi i}\int_{a}^1\dfrac{\tilde{\mathcal{H}}(\sigma) x^\sigma d\sigma}{\sqrt[4]{\sigma - 1 +i0}}
=\dfrac{1}{2\pi i}\int_0^{1-a}\dfrac{\tilde{\mathcal{H}}(1-u)x^{1-u}}{\sqrt[4]{-1+i0}}\cdot\dfrac{du}{\sqrt[4]{u}}\\
=\dfrac{x e^{-\frac{\pi i}{4}}}{2\pi i}\int_0^{1-a}\dfrac{\tilde{\mathcal{H}}(1-u)x^{-u}}{\sqrt[4]{u}}du.
\end{multline*}
Similarly, we find
$$j_4 = -\dfrac{x e^{\frac{\pi i}{4}}}{2\pi i}\int_0^{1-a}\dfrac{\tilde{\mathcal{H}}(1-u)x^{-u}}{\sqrt[4]{u}}du,$$
whence
$$J = \dfrac{x \sin{\frac{\pi}{4}}}{\pi}\int_0^{1-a}\dfrac{\tilde{\mathcal{H}}(1-u)x^{-u}}{\sqrt[4]{u}}du.$$
Using Taylor's formula, we obtain
$$
\tilde{\mathcal{H}}(1 - u) = \tilde{\mathcal{H}}(1) + O(u),
$$
whence
\begin{multline*}J = \dfrac{x \sin{\frac{\pi}{4}}}{\pi}\cdot \tilde{\mathcal{H}}(1)\int_0^{+\infty}u^{-\frac{1}{4}}{x^{-u}}du\\
+ O\left(x\int_{1-a}^{+\infty}u^{-\frac{1}{4}}x^{-u} du + x\int_0^{+\infty}u^{\frac{3}{4}}x^{-u}du\right)\\
= \dfrac{x \sin{\frac{\pi}{4}}}{\pi(\ln x)^{\frac{3}{4}}}\Gamma\left(\dfrac{3}{4}\right) \tilde{\mathcal{H}}(1) + O\left(x^a + \dfrac{x}{(\ln x)^{\frac{7}{4}}}\right)\!.
\end{multline*}
Using the definition of $\tilde{\mathcal{H}}(s)$ and the reflection formula for the gamma function, we obtain
$$J = \dfrac{\mathcal{G}(1)}{\Gamma(\frac{1}{4})}\cdot\dfrac{x}{(\ln x)^{\frac{3}{4}}} + O\left(x^a + \dfrac{x}{(\ln x)^{\frac{7}{4}}}\right)\!.$$
Let us estimate the remaining integrals $j_\nu$ for $\nu \neq 3, 4$. Using the estimates \eqref{Zetabound}, \eqref{Lbound}, and \eqref{P1P3}, we obtain
$$j_1 + j_6 \ll \dfrac{x \mathcal{L}^{\frac{5}{12}}}{T},$$
$$j_2 + j_5 \ll (\ln T)^{\frac{5}{12}}x^a\int_{-T}^T\dfrac{dt}{\sqrt{t^2 + a^2}}\ll x^a (\ln T)^\frac{17}{12}.$$
Let us take $T = \exp(\sqrt{\mathcal{L}})$, then
$$H(x) = \dfrac{\mathcal{G}(1)}{\Gamma(\frac{1}{4})}\cdot\dfrac{x}{(\ln x)^{\frac{3}{4}}} + O\left(\dfrac{x}{(\ln x)^{\frac{7}{4}}}\right)\!.$$
From this, using \eqref{QMT}, we find
$$Q^{MT} = \dfrac{c_1 x}{(\ln x)^{\frac{3}{4}}} + O\left(\dfrac{ x}{(\ln x)^{\frac{7}{4}}}\right)\!,$$
where $$c_1 = \dfrac{c\pi \mathcal{G}(1)}{4\Gamma(\frac{1}{4})}.$$
From this, using \eqref{const_c}, \eqref{P1}, and \eqref{P3}, as well as the equality
$$
\prod_p \left(1 - \dfrac{1}{p^2}\right) = \dfrac{6}{\pi^2},
$$
we obtain
\begin{multline*}c_1 = \dfrac{1}{\Gamma(\frac{1}{4})} \left(\dfrac{\pi^5}{128}\right)^{\frac{1}{4}}\\
\times\prod_{p\equiv 1\!\!\!\!\!\pmod{4}}\sqrt{1-\dfrac{1}{p}}\left(\dfrac{1}{p-1} + (p-1)\ln\dfrac{p}{p-1}\right)\prod_{p\equiv 3\!\!\!\!\!\pmod{4}}\left(1-\dfrac{1}{p^2}\right)^{\frac{1}{4}}\\ = \dfrac{\pi^{\frac{3}{4}}}{2\Gamma(\frac{1}{4})}\prod_{p\equiv 1\!\!\!\!\!\pmod{4}}\sqrt[4]{\dfrac{p-1}{p+1}}\left(\dfrac{1}{p-1} + (p-1)\ln\dfrac{p}{p-1}\right)\!.
\end{multline*}
Finally, from \eqref{QFinal} and \eqref{Finalerr}, we find
$$Q(x) = \dfrac{c_1 x}{(\ln x)^{\frac{3}{4}}} + O_{\varepsilon}\left(\dfrac{x}{(\ln x)^{1-\varepsilon}}\right)\!.$$
The theorem is proved.

\section*{Acknowledgments}

This work was supported by the Russian Science Foundation under grant no 24-71-10005, https://rscf.ru/project/24-71-10005/

The author thanks the anonymous reviewer for valuable corrections and constructive feedback that improved the manuscript.

\bigskip 

\textbf{V. V. Iudelevich}\\
Steklov Mathematical Institute,
Gubkina str., 8, Moscow, Russia, 119991\\
\textit{E-mail:} \texttt{vitaliiyudelevich@mail.ru}


\begin{thebibliography}{99}
\bibitem{Ingham27}
Ingham A. E. \textit{Some asymptotic formulae in the theory of numbers}// Journ. London Math. Soc., %Vol. s1-2, 
Issue 3 (1927), pp. 202~--~208.

\bibitem{Ester}
Estermann T. \textit{\"{U}ber die Darstellungen einer Zahl als Differenz von zwei Produkten}// J. Reine Angew. Math. 164 (1931), pp. 173~--~182.

\bibitem{Heath-Brown}  Heath-Brown D. R. \textit{The fourth power moment of the Riemann zeta function}// J. London Math. Soc., Ser. 3, 38 (1979), pp. 385~--~422.

\bibitem{Iwaniec}
Deshouillers J.-M.  and Iwaniec H. \textit{An additive divisor problem}// J. London Math. Soc., Ser. 2, 26 (1982), pp. 1~--~14.

\bibitem{Linnik}
Linnik Yu. V. \textit{The Dispersion Method in Binary Additive Problems}// Trans. of Math. Monog., 4 (1963), 186 pp.

\bibitem{Meurman}
Meurman T. \textit{On the binary additive divisor problem.} // Number Theory: Proc. Turku
Symp. on Number Theory in Memory of Kustaa Inkeri, Turku, 1999, Ed. by M. Jutila et
al. (W. de Gruyter, Berlin, 2001), pp. 223~--~246.

\bibitem{Topac}
Topacogullari B. \textit{On a certain additive divisor problem}// Acta Arith. 181(2) (2017), pp. 143–172.

\bibitem{Luca} 
Luca F., Shparlinski I. \textit{On the values of the divisor function}// Monatsh Math 154 (2008), pp. 59~--~69.

\bibitem{Korol2010} Korolev M.~A. \textit{On Karatsuba’s problem concerning
the divisor function
} // Monatsh. Math {168} (2012), pp. 403~--~441.


\bibitem{Fouvry Radziwill}
Fouvry E., Radziwi{\l\l} M. \textit{Level of distribution of unbalanced convolutions}// Ann. scient. de l'\'Ecole Normale Sup\'erieure {55} (2022), pp. 537~--~568.

\bibitem{Fouvry Tenenbaum}
Fouvry E., Tenenbaum G.
\textit{Multiplicative functions in large arithmetic progressions and applications}// \texttt{	arXiv: 2004.04766 [math.NT].}

\bibitem{Shiu80}
Shiu P. \textit{A Brun-Titschmarsh theorem for multiplicative functions}// Journal f\"{u}r die reine und angewandte Mathematik 313 (1980), pp. 161~--~170.

\bibitem{Divisors}
Hall R., Tenenbaum G. \textit{Divisors}//  Cambridge	Tracts in Mathematics, Vol. 90. Cambridge University Press, 1988.


\bibitem{BomFried86}
Bombieri E., Friedlander J.B., Iwaniec H. \textit{Primes in arithmetic progressions to large Moduli. II}// Math. Ann. 277 (1987), pp. 361~--~393.



\bibitem{Mardzh}
Mardzhanishvili K. K. \textit{Estimation of one arithmetic sum}// DAN, 7(22), (1939), pp 391~--~393 (in russian).

\bibitem{Siegel-Walfisz}
Tenenbaum G. \textit{Introduction to analytic and probabilistic number theory}// In: Graduate Studies in
Mathematics, vol.163, 3rd ed-n. Amer. Math. Soc., 2015.

\bibitem{Bettin Chandee}
Bettin S., Chandee V.
\textit{Trillinear forms with Kloosterman fractions}//
Adv. Math. {328} (2018), pp. 1234~--~1262.

%\bibitem{Hooley}
%Hooley C. \textit{Applications of sieve methods to the theory of numbers}// Cambridge University Press, 1976.
\bibitem{Hooley}
\textit{Hooley C.} \textit{Applications of sieve methods to the theory of numbers}.// Cambridge University Press, 1976.

\bibitem{VoKar}
Karatsuba A. A., Voronin S. M., \textit{The Riemann Zeta-function}, Walter de Gruyter, 1992.





\end{thebibliography}
\end{document}